\newtheorem{theo}{Theorem}[section]
\newtheorem{lemma}[theo]{Lemma}
\newtheorem{corollary}[theo]{Corollary}
\newtheorem{prop}[theo]{Proposition}
\newtheorem{conjecture}[theo]{Conjecture}
\renewenvironment{proof}{ \emph{Proof}}{$\Box$}
\newtheorem{defi}{Definition}[section]
\newcommand {\ZZ} {\mathbb {Z}}
\newcommand {\CC} {\mathbb {C}}
\newcommand{\TT}{\mathbb{T}}
\newcommand{\wt}{\mathrm{wt}}
\newcommand{\jj}{\mathfrak{j}}
\newcommand{\so}{\mathfrak{so}}
\renewcommand{\sp}{\mathfrak{sp}}
\renewcommand{\sl}{\mathfrak{sl}}
\newcommand{\oj}{\mathfrak{g}}
\newcommand{\ojl}{\mathfrak{gl}}
\newcommand{\kk}{\mathfrak{k}}
\newcommand{\id}{\mathrm{Id}}
\newcommand{\oo}{\mathfrak{o}}
\renewcommand{\aa}{\mathfrak{a}}
\newcommand{\hh}{\mathfrak{h}}
\renewcommand{\gg}{\mathfrak{g}}
\renewcommand{\ll}{\mathfrak{l}}
\renewcommand {\phi} {\varphi}
\newcommand{\tr}{\mathrm{tr}}
\newcommand{\Int}{\mathrm{Int}}
\newcommand{\alg}{\mathrm{alg}}
\newcommand{\fin}{\mathrm{fin}}
\renewcommand{\span}{\mathrm{span}}
\newcommand{\gl}{\mathfrak{gl}}
\newcommand{\Soc}{\mathrm{soc}}
\newcommand{\Ker}{\mathrm{ker}}
\newcommand{\Hom}{\mathrm{Hom}}
\newcommand{\End}{\mathrm{End}}
\def\cplus{\hbox{$\subset${\raise0.3ex\hbox{\kern -0.55em ${\scriptscriptstyle +}$}}\ }}
\def\clplus{\hbox{$\subset${\raise0.3ex\hbox{\kern -0.55em ${\scriptscriptstyle +}$}}\ }}
\def\crplus{\hbox{$\supset${\raise1.05pt\hbox{\kern -0.55em ${\scriptscriptstyle +}$}}\ }}
\title{Tensor Representations of Mackey Lie algebras and their dense subalgebras}
\author[Ivan Penkov]{\;Ivan~Penkov}
\address{
Ivan Penkov
\newline School of Engineering and Sciences, Mathematics 
\newline Jacobs University Bremen
\newline Campus Ring 1
\newline Bremen, Bremen 28759, Germany}
\email{i.penkov@jacobs-university.de}
\author[Vera Serganova]{\;Vera Serganova}
\address{
Vera Serganova
\newline Department of Mathematics
\newline University of California, Berkeley
\newline Berkeley, CA 94720-3840, USA}
\email{serganov@math.berkeley.edu}
\begin{document}

\maketitle

\begin{abstract}
 In this article we review the main results of the earlier papers \cite{pstyr}, \cite{ps} and \cite{DPS}, and establish related new results in considerably greater generality. We introduce a class of infinite-dimensional Lie algebras $\gg^{M}$, which we call Mackey Lie algebras, and define monoidal categories $\TT_{\gg^M}$ of tensor $\gg^M-$modules. We also consider dense subalgebras $\aa \subset \gg^M$ and corresponding categories $\TT_\aa$. The locally finite Lie algebras $\sl(V,W), \oo(V), \sp(V)$ are dense subalgebras of respective Mackey Lie algebras. Our main result is that if $\gg^M$ is a Mackey Lie algebra and $\aa \subset \gg^M$ is a dense subalgebra, then the monoidal category $\TT_\aa$ is equivalent to $\TT_{\sl(\infty)}$ or $\TT_{\oo(\infty)}$; the latter monoidal categories have been studied in detail in \cite{DPS}. A possible choice of $\aa$ is the well-known Lie algebra of generalized Jacobi matrices.
 
 \textbf{Mathematics Subject Classification (2010):} Primary 17B10, 17B65. Secondary 18D10.
 
 \textbf{Key words:} finitary Lie algebra, Mackey Lie algebra, linear system, tensor representation, socle filtration.
\end{abstract}

\section*{Introduction}\label{sect:intro}
This paper combines a review of some results on locally finite Lie
algebras, mostly from \cite{pstyr}, \cite{ps} and \cite{DPS}, with new
results about categories of representations of a class of (non-locally
finite) infinite-dimensional Lie algebras which we call Mackey Lie
algebras. Locally finite Lie algebras (i.e. Lie algebras in which any
finite set of elements generates a finite-dimensional Lie subalgebra)
and their representations have been gaining the attention of
researchers in the past 20 years. An incomplete list of references in this topic is: \cite{BA1}, \cite{BB}, \cite{BS}, \cite{dip}, \cite{dip3}, \cite{DPS}, \cite{DPSn}, \cite{DPW}, \cite{DaPW} \cite{N}, \cite{Na}, \cite{NP}, \cite{NS}, \cite{O}, \cite{ps}, \cite{pstyr}, \cite{PZ}. In particular, in \cite{pstyr}, \cite{ps} and \cite{DPS} integrable representations of the three
classical locally finite Lie algebras 
$\gg = \sl(\infty), \oo(\infty), \sp(\infty)$ 
have been studied from various points of view. An important step in
the development of the representation theory of 
these Lie algebras has been the introduction of the category of tensor modules $\TT_\gg$ in \cite{DPS}.

In the present article we shift the focus to understanding a natural
generality in which the category $\TT_\gg$ is defined. 
In particular, we consider the finitary locally simple Lie algebras
$\gg = \sl(V,W), \oo(V), \sp(V)$, 
where $V$ is an arbitrary vector space (not necessarily of countable dimension), and either a non-degenerate pairing 
$V \times W \rightarrow \CC$ is given, or $V$ is equipped with a
non-degenerate symmetric, or antisymmetric, form. In sections 1-5 we
reproduce the most important results from \cite{pstyr} and \cite{DPS}
in this greater generality. In fact, we study five different
categories of integrable modules, see subsection \ref{sect:3.6}, but
pay maximum attention to the category $\TT_\gg$. The central new
result in this part of the paper is Theorem \ref{equivmain}, claiming
that the category $\TT_\gg$ for $\gg = \sl(V,W), \oo(V), \sp(V)$ is canonically equivalent, as a
monoidal category, to the respective category 
$\TT_{\sl(\infty)}, \TT_{\oo(\infty)}$ or $\TT_{\sp(\infty)}$. It is
shown in \cite{DPS} 
that each of the latter categories is Koszul and that
$\TT_{\sl(\infty)}$ is self-dual Koszul, while 
$\TT_{\oo(\infty)}$ and $\TT_{\sp(\infty)}$ are not self-dual but are equivalent.

In the second part of the paper, starting with section \ref{sect:6},
we explore several new ideas. The first one is that, 
given a non-degenerate pairing $V \times W \rightarrow \CC$ between
two vector spaces, or a non-degenerate symmetric or antisymmetric form
on a vector space $V$, there is a canonical, in general non-locally
finite, Lie algebra attached to this datum. 
Indeed, fix a pairing $V \times W \rightarrow \CC$. Then the Mackey
Lie algebra $\gl^M(V,W)$ is the Lie algebra of all endomorphisms of
$V$ whose duals keep $W$ stable (this definition is given in a more
precise form at the beginning of section 
\ref{sect:6}). Similarly, if $V$ is equipped with a non-degenerate
form, the respective Lie algebra 
$\oo^M(V)$ or $\sp^M(V)$ is the Lie algebra of all endomorphisms of $V$ for which the form is invariant. 

The Lie algebras $\gl^M(V,W),\oo^M(V),\sp^M(V)$ are not simple as they
have obvious ideals: these are respectively 
$\gl(V,W) \oplus \CC\mathrm{Id}$, $\oo(\infty)$, and $\sp(\infty)$. However, we
prove that, if both $V$ and $W$ are countable dimensional, the
quotients $\gl^M(V,W)/(\gl(V,W)\oplus\CC\mathrm{Id}), \oo^M(V)/\oo(V),
\sp^M(V)/\sp(V)$ are simple Lie algebras. 
This result is an algebraic analogue of the simplicity of the Calkin algebra in functional analysis.

Despite the fact that the Lie algebras $\gl^M(V,W), \oo^M(V),
\sp^M(V)$ are completely natural objects, the representation theory of
these Lie algebras has not yet been explored. We are undertaking the
first step of such an exploration by introducing the categories of
tensor modules $\TT_{\gg^M}$ for $\gg^M = \gl^M(V,W),
\oo^M(V),\sp^M(V)$. Our main result about these categories is 
Theorem \ref{mainres_sect7}
which implies that $\TT_{\gl^M(V,W)}$ is equivalent to
$\TT_{\sl(\infty)}$, and $\TT_{\oo^M(V)}$ and 
$\TT_{\sp^M(V)}$ are equivalent respectively to $\TT_{\oo(\infty)}$ and $\TT_{\sp(\infty)}$.

A further idea is to  consider dense subalgebras $\aa$ of the Lie algebras $\gg^M$ (see the definition in section \ref{sect:7}). We show that if $\aa \subset \gg$ is a dense subalgebra, the category $\TT_\aa$, whose objects are tensor modules of $\gg$ considered as $\aa-$modules, is canonically equivalent to $\TT_{\gg^M}$, and hence to one of the categories $\TT_{\sl(\infty)}$ or $\TT_{\oo(\infty)}$. It is interesting that this result applies to the Lie algebra of generalized Jacobi matrices (infinite matrices with ``finitely many non-zero diagonals'') which has been studied for over 30 years, see for instance \cite{FT}.

In short, the main point of the present paper is that the categories of tensor modules $\TT_{\sl(\infty)}, \TT_{\oo(\infty)}, \TT_{\sp(\infty)}$ introduced in \cite{DPS} are in some sense universal, being naturally equivalent to the respective categories of tensor representations of a large class of, possibly non-locally finite, infinite-dimensional Lie algebras.

\section*{Acknowledgements}\label{sect:acknow.}
Both authors thank the Max Planck Institute for Mathematics in Bonn where a preliminary draft of this paper was written in 2012. We also acknowledge support by the DFG Priority Program $1388$ ``Representation theory''. Vera Serganova acknowledges support from the NSF via grant 1303301 and thanks Jacobs University for its hospitality.

\section{Preliminaries}\label{sect:1}

The ground field is $\CC$. By $M^*$ we denote the dual space of a vector space $M$, i.e. $M^* = \mathrm{Hom}_{\CC}(M,\CC)$. $S_n$ stands for the symmetric group on $n$ letters. The sign $\subset$ denotes not necessarily strict inclusion. Under a \textit{natural representaion} (or a \textit{natural module}) of a classical simple finite-dimensional Lie algebra we understand a simple non-trivial finite-dimensional representation of minimal dimension.

In this paper $\gg$ denotes a \textit{locally simple locally finite} Lie algebra, i.e. an infinite-dimensional Lie algebra $\gg$ 
obtained as the direct limit $\varinjlim\,\gg_\alpha$ of a directed system of embeddings (i.e. injective homomorphisms) $\gg_\alpha \hookrightarrow\gg_\beta$
of finite-dimensional simple Lie algebras parametrized by a directed set of indices. It is clear that any such $\gg$ is a simple Lie algebra.
If $\gg$ is countable dimensional, then the above directed set can always be chosen as $\ZZ_{\geq 1}$, and the corresponding directed system can be chosen as a chain 
\begin{equation}\label{def_gg}
 \gg_1 \hookrightarrow \gg_2 \hookrightarrow \dots \gg_i \hookrightarrow \gg_{i+1} \hookrightarrow \dots.
\end{equation}
In this case we write
$\gg=\varinjlim \gg_i$. Moreover, if $\gg_i = \sl(i+1)$, then up to isomorphism there is only one such Lie algebra which we denote by $\sl(\infty)$. Similarly, if $\gg_i = \oo(i)$ or $\gg_i = \sp(2i),$ up to isomorphism one obtains only two Lie algebras: $\oo(\infty)$ and $\sp(\infty)$. The Lie algebras $\sl(\infty)$,  $\oo(\infty)$,  $\sp(\infty)$ are often referred to as the \textit{finitary locally simple Lie algebras} \cite{BA1}, \cite{BA2}, \cite{BS}, or as the \textit{classical locally simple Lie algebras} \cite{ps}. 

A more general (and very interesting) class of locally finite locally simple  Lie algebras are the diagonal locally finite Lie algebras introduced by Y. Bahturin and H. Strade in \cite{BHS}.
We recall that an injective homomorphism $\gg_1 \hookrightarrow \gg_2$ of simple classical Lie algebras of the same type $\sl, \oo, \sp,$ is \textit{diagonal} if the pull-back $V_{\gg_2\downarrow\gg_1}$ of a natural representation $V_{\gg_2}$ of $\gg_2$ to $\gg_1$ is isomorphic to a direct sum of copies of a natural representation $V_{\gg_1},$ of its dual $V_{\gg_1}^*,$ and of the trivial $1-$dimensional representation. In this paper, under a \textit{diagonal Lie algebra $\gg$} we mean an infinite-dimensional Lie 
algebra obtained as the limit of a directed system of diagonal homomorphisms of classical simple Lie algebras $\gg_\alpha$. We say that a diagonal Lie algebra \textit{is of type} $\sl$ (resprectively, $\oo$ or $\sp$) if all $\gg_\alpha$ can be chosen to have type $\sl$ (respectively, $\oo$ or $\sp$). 

Countable-dimensional diagonal Lie algebras have been classified up to
isomorphism by A. Baranov and A. Zhilinskii \cite{BZ}. S. Markouski
\cite{M} has determined when there is an embedding $\gg
\hookrightarrow \gg'$ for given countable-dimensional diagonal Lie
algebras $\gg$ and $\gg'$. If both $\gg$ and $\gg'$ are classical
locally simple Lie algebras, then an embedding 
$\gg \hookrightarrow \gg'$ always exists, and such embeddings have been studied in detail in \cite{dip2}.

Let $V$ and $W$ be two infinite-dimensional vector spaces with a non-degenerate pairing $V \times W \rightarrow \CC$. 
G. Mackey calls such a pair $V,\,W$ a \textit{linear system} and was the first to study linear systems in depth \cite{m}.
The tensor product $V\otimes W$ is an associative algebra (without
identity), and we denote the corresponding Lie algebra by 
$\gl(V,W)$.
The pairing $V\times W\to\CC$ induces a homomorphism of Lie algebras
$\mathrm{tr} : \gl(V,W)\to\CC$. The kernel of this homomorphism is
denoted by $\sl(V,W)$. The Lie algebra $\sl(V,W)$ is a locally simple
locally finite Lie algebra. A corresponding directed system is given
by $\{\sl(V_f,W_f)\}$, where $V_f$ and $W_f$ run over all
finite-dimensional subspaces $V_f\subset V, W_f\subset W$ such that
the restriction of the pairing $V \times W \rightarrow \CC$ to
$V_f\times W_f$ is non-degenerate. If $V$ and $W$ are countable
dimensional, then $\sl(V,W)$ is isomorphic to $\sl(\infty)$. In what
follows we call a pair of finite-dimensional subspaces $V_{f} \subset
V$, $W_{f} \subset W$ a \textit{finite-dimensional non-degenerate
  pair} if the restriction of the pairing $V \times W \rightarrow \CC$
to $V_{f} \times W_{f}$ is non-degenerate. We can also define 
$\gl(V,W)$ as a Lie algebra of finite rank linear operators in
$V\oplus W$ preserving $V,W$ and the pairing $V\times W\to\mathbb C$.

If $V$ and $W$ are countable dimensional then, up to isomorphism, there is only one linear system \cite{m}. In this case we set $V_*:=W$. 
According to Mackey \cite{m}, there exists a basis $\lbrace
v_1,v_2,\dots\rbrace$ of $V$ such that 
$V_* = \span \lbrace v^*_1, v^*_2, \dots\rbrace$, 
where $\lbrace v^*_1, v^*_2, \dots\rbrace$ is the set of linear
functionals dual to $\lbrace v_1,v_2,\dots\rbrace$, i.e. $v_i^*(v_j) =
\delta_{ij}$. The choice of such basis identifies $\gl(V,V_*)$ with
the Lie algebra $\gl(\infty)$ consisting of infinite matrices
$X=(x_{ij})_{i\geq 1,j\geq 1}$ with finitely many non-zero
entries. The Lie algebra $\sl(V,W)$ is identified with 
$\sl(\infty)$ realized as the Lie algebra of traceless matrices 
$X = (x_{ij})_{i \geq 1, j\geq 1}$ with finitely many non-zero entries. 

Now  let $V$ be a vector space endowed with a non-degenerate symmetric (respectively, antisymmetric) form $(\cdot,\cdot)$. 
Then $\Lambda^2V$ (respectively, $S^2V$) has a Lie algebra structure, defined by 
$$[v_1\wedge v_2,w_1\wedge w_2]=-(v_1,w_1)v_2\wedge w_2+(v_2,w_1)v_1\wedge w_2 +(v_1,w_2)v_2\wedge w_1-(v_2,w_2)v_1\wedge w_1$$ 
(respectively, by $$[v_1 v_2,w_1 w_2]=(v_1,w_1)v_2 w_2+(v_2,w_1)v_1 w_2 +(v_1,w_2)v_2 w_1+(v_2,w_2)v_1 w_1.$$ 
We denote the Lie algebra  $\Lambda^2V$ by $\oo(V)$, and the Lie algebra $S^2V$ by $\sp(V)$. Let $V_f\subset V$ be an $n$-dimensional subspace such that the 
restriction of the form on $V_f$ is non-degenerate. Then $\oo(V_f)\subset \oo(V)$ (respectively, $\sp(V_f)\subset \sp(V)$) is a simple subalgebra 
isomorphic to $\oo(n)$ (respectively, $\sp(n)$). Therefore, $\oo(V)$ (respectively, $\sp(V)$) is the direct limit of all its subalgebras $\oo(V_f)$ 
(respectively, $\sp(V_f)$). This shows that both $\oo(V)$ and $\sp(V)$ are locally simple locally 
finite Lie algebras. We can also identify $\oo(V)$ (respectively. $\sp(V)$ with the Lie subalgebra of finite rank operators in $V$ preserving the form  
$(\cdot,\cdot)$.

If $V$ is countable dimensional, there always is a basis $\lbrace v_i,w_j\rbrace_{i,j \in \ZZ}$ of $V$ such that $\span\lbrace v_i\rbrace_{i \in \ZZ}$ and $\span\lbrace w_j\rbrace_{j \in \ZZ}$ are isotropic spaces and $(v_i, w_j) = 0$ for $i \neq j$, $(v_i, w_i)=1$. Therefore, in this case $\oo(V) \simeq \oo(\infty)$ and  $\sp(V)\simeq \sp(\infty)$.

Note that, if $V$ is not finite or countable dimensional, then $V$ may have several inequivalent 
non-degenerate symmetric forms. Indeed, let for instance $V:=W\oplus W^*$ for some countable-dimensional space $W$. Extend the pairing between $W$ and $W^*$ to a 
non-degenerate symmetric form $(\cdot,\cdot)$ on $V$ for which $W$ and $W^*$ are both isotropic. It is clear that $W$ is a maximal isotropic subspace of $V$. 
On the other hand, choose a basis $\bf b$ in $V$ and let $(\cdot,\cdot)'$ be the symmetric form on $V$ for which $\bf b$ is an orthonormal basis. Then 
$V$ does not have countable-dimensional maximal isotropic subspaces for the form $(\cdot,\cdot)'$. Hence the forms $(\cdot,\cdot)$ and $(\cdot,\cdot)'$  are 
not equivalent.

\begin{prop}\label{prop:1.29.9}
a) Two Lie algebras $\sl(V,W)$ and $\sl(V',W')$ are isomorphic if and only if the linear systems $V\times W\rightarrow \CC$ and 
$V' \times W' \rightarrow \CC$ are isomorphic.

b) Two Lie algebras $\oo(V)$ and $\oo(V')$ (respectively, $\sp(V)$ and $\sp(V')$) are isomorphic if and only if there is an isomorphism of vector spaces 
$V\simeq V'$ transferring the form defining $\oo(V)$ (respectively $\sp(V)$) into the form defining $\oo(V')$ (respectively, $\sp(V')$).

\end{prop}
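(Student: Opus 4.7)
\emph{Proof plan.} The \emph{if} direction in both (a) and (b) is essentially formal. An isomorphism of linear systems $(\alpha,\beta):(V,W)\simeq(V',W')$ (satisfying $\langle\alpha(v),\beta(w)\rangle'=\langle v,w\rangle$) extends to the map $v\otimes w\mapsto\alpha(v)\otimes\beta(w)$, which restricts to a Lie algebra isomorphism $\sl(V,W)\simeq\sl(V',W')$; similarly, an isometry $V\simeq V'$ of the respective bilinear forms induces a Lie algebra isomorphism $\Lambda^2V\simeq\Lambda^2V'$ (resp.\ $S^2V\simeq S^2V'$). The entire content of the proposition lies in the \emph{only if} direction, which amounts to reconstructing the defining data (the linear system, or the invariant form) from the abstract Lie algebra.

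For (a), given a Lie algebra isomorphism $\phi:\sl(V,W)\to\sl(V',W')$, my plan is first to recover the two natural representations intrinsically as $\sl(V,W)$-modules. I would characterize $V$ and $W$ as the only two isomorphism classes of simple integrable $\sl(V,W)$-modules whose restriction to each finite-dimensional subalgebra $\sl(V_f,W_f)$ in the defining directed system decomposes as a direct sum of copies of a natural $\sl(V_f,W_f)$-module (respectively, its dual) plus trivial summands. An equivalent, more intrinsic characterization goes through the cone of ``rank one'' elements: a nonzero $x\in\sl(V,W)$ equals $v\otimes w$ with $w(v)=0$ iff $[x,[x,\sl(V,W)]]\subseteq\CC x$. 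Consequently $\phi$ must identify the unordered pair $\{V,W\}$ of $\sl(V,W)$-modules (pulled back via $\phi^{-1}$) with $\{V',W'\}$, giving either module isomorphisms $V\simeq V'$, $W\simeq W'$ or the swap $V\simeq W'$, $W\simeq V'$.

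The second step is to transport the pairing. In the unswapped case, the pulled-back pairing on $V\times W$ is nondegenerate and $\sl(V,W)$-invariant; a Schur-type argument using irreducibility of $V$ and $W$ gives $\dim\Hom_{\sl(V,W)}(V,W^*)=1$, so the pulled-back pairing differs from the given one by a scalar, which can be absorbed by rescaling $\alpha$. The swap case reduces to the unswapped one by post-composing $\phi$ with the canonical Lie algebra isomorphism $\sl(V',W')\simeq\sl(W',V')$ sending $v\otimes w\mapsto -w\otimes v$, which itself comes from the tautological identification of linear systems $(V',W')\simeq(W',V')$. Part (b) is the same argument in a simpler setting: $V$ is the unique simple integrable $\oo(V)$- (resp.\ $\sp(V)$-) module of ``rank one'' type, so $\phi$ yields a linear isomorphism $V\simeq V'$ that automatically intertwines the invariant nondegenerate bilinear form, which is unique up to scalar and of the required symmetry type.

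The main obstacle I foresee is Step 1 in (a): providing a characterization of the natural modules which is purely Lie-algebraic and valid in arbitrary (possibly uncountable) dimension, rather than one dependent on a particular exhausting chain of finite-dimensional subalgebras. The rank-one cone approach appears to be the right intrinsic tool, but verifying that no exotic simple integrable modules in this generality mimic the natural ones -- and that the cone indeed distinguishes $V$ from $W$ up to the duality swap -- is the technical heart of the proof.
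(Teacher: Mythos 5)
Your route is genuinely different from the paper's. The paper never tries to recover $V$ and $W$ intrinsically as modules. Instead it works entirely at the level of the directed system: by Corollary \ref{standardex} the target $\sl(V',W')$ can be exhausted by subalgebras of the form $\sl(V'_f,W'_f)$, and Lemma \ref{lemma:1.4.10} (an intermediate-subalgebra statement proved with the Killing-form normalization $B_M=t_MB$ and minimality of $t_M$ for natural modules) guarantees that each $\sl(V_f,W_f)$, transported by the isomorphism, sits inside some $\sl(V'_f,W'_f)$ so that the natural module restricts to natural plus trivial. This yields pairing-preserving embeddings $V_f\hookrightarrow V'_f$, $W_f\hookrightarrow W'_f$ (after the same transposition twist you use), and the isomorphism of linear systems is obtained by passing to the direct limit; surjectivity comes from running the argument symmetrically. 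What the paper's approach buys is that all the representation theory needed is finite-dimensional, and the auxiliary results (Lemma \ref{lemma:1.4.10}, Corollary \ref{standardex}) are reused elsewhere, e.g.\ in Proposition \ref{prop:1.4.10}. What your approach would buy is an intrinsic, basis- and exhaustion-free description of the natural modules, which in particular would describe the full automorphism group of $\sl(V,W)$; your Schur-type step for transporting the pairing is fine (it is exactly the one-dimensionality of $\Hom_{\gg}(V\otimes W,\CC)$, cf.\ Lemma \ref{inv1}), as is the reduction of the swap case via $v\otimes w\mapsto -w\otimes v$.

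That said, the pivotal step of your plan is not carried out, and it is a real gap rather than a routine verification. Showing that the extremal cone $\{x\neq 0: [x,[x,\gg]]\subseteq\CC x\}$ consists exactly of the traceless rank-one elements is unproblematic (it reduces to each finite-dimensional $\sl(V_f,W_f)$). But from the $\phi$-invariance of this cone you assert ``consequently $\phi$ must identify the unordered pair $\{V,W\}$ with $\{V',W'\}$'': this does not follow without reconstructing the two families of ``lines'' $\CC v$ and $\CC w$ from the Lie-algebraic incidence geometry on extremal elements (e.g.\ via which pairs of extremal elements commute with extremal sum), then recovering the linear structure of $V$ and $W$ from that projective data, and finally checking that the two families cannot be mixed except by the global transposition. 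This is the classical extremal-geometry reconstruction used in the classification of finitary simple Lie algebras, and it is substantially longer than the paper's argument. Similarly, your alternative characterization of $V$ and $W$ as the only simple integrable modules restricting to each $\sl(V_f,W_f)$ as natural plus trivial is plausible but also requires proof, and is not obviously transportable along an abstract isomorphism (the defining directed system on the target side is a priori unrelated to the image of the source system — which is precisely the issue Lemma \ref{lemma:1.4.10} and Corollary \ref{standardex} are designed to resolve). Either fill in the reconstruction step or fall back on the paper's interleaving of directed systems.
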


We first prove a lemma.

\begin{lemma}\label{lemma:1.4.10}(cf. Proposition $2.3$ in \cite{dip2})

a) Let $\gg_1 \subset \gg_3$ be an inclusion of classical finite-dimensional simple Lie algebras such that a natural $\gg_3$-module restricts to $\gg_1$ as 
the direct sum of a natural $\gg_1$-module and a trivial $\gg_1$-module. If $\gg_2$ is an intermediate classical simple subalgebra, 
$\gg_1\subseteq\gg_2\subseteq\gg_3$, 
then a natural $\gg_3$-module restricts to $\gg_2$ as the direct sum of a natural $\gg_2$-module plus a trivial module. 

b) Assume $\mathrm{rk} \gg_1 > 4$.
If $\gg_1\simeq \sl(i)$, then $\gg_2$ is isomorphic to $\sl(k)$ for some $k\geq i$. If $\gg_3\simeq \oo(j)$   
(respectively, $\sp(2j)$), then $\gg_2$ is isomorphic to $\oo(k)$ (respectively, $\sp(2k)$) for some $k\leq j$.
\end{lemma}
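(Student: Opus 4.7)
The plan for part (a) is to use complete reducibility and match simple components on restriction to $\gg_1$. Because $\gg_2$ is reductive, write $V_3|_{\gg_2} = M_1 \oplus \cdots \oplus M_r$ as a direct sum of simple $\gg_2$-modules. Restricting further to $\gg_1$, each $M_s|_{\gg_1}$ is a $\gg_1$-submodule of $V_3|_{\gg_1} = V_1 \oplus T$. Since $V_1$ is simple and occurs with multiplicity one in $V_3|_{\gg_1}$, exactly one of the $M_s$, call it $M$, contains $V_1$ in its $\gg_1$-restriction; the remaining $M_{s'}$ restrict to $\gg_1$ as sums of trivials only. For each such $M_{s'}$, the entire subalgebra $\gg_1$ lies in $\mathrm{ker}(\gg_2 \to \gl(M_{s'}))$, and simplicity of $\gg_2$ forces this kernel to be all of $\gg_2$, so $M_{s'}$ is a trivial $\gg_2$-module. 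Hence $V_3|_{\gg_2} = M \oplus T'$ with $T'$ trivial and $M|_{\gg_1} = V_1 \oplus T''$ for some trivial $\gg_1$-module $T''$.

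The main obstacle in part (a) is then to show that $M$ is a natural $\gg_2$-module. To handle this I would fix a Cartan subalgebra $\hh_1 \subset \gg_1$, extend it to a Cartan subalgebra $\hh_2 \subset \gg_2$, and choose compatible Borel subalgebras. The set of $\hh_1$-weights of $M$ is contained in $\{0\} \cup \{\text{weights of } V_1\}$, and the latter are precisely the weights of the natural $\gg_1$-module. A case-by-case examination of the classical type of $\gg_2$ then shows that the $\hh_2$-highest weight of any simple $\gg_2$-module with such a restricted $\hh_1$-weight support must be the highest weight of a natural $\gg_2$-module, establishing (a).

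For part (b), extending a Cartan of $\gg_1$ to a Cartan of $\gg_2$ yields $\rk \gg_2 \geq \rk \gg_1 > 4$, which rules out all low-rank exceptional isomorphisms (in particular $\gg_2 \not\simeq \sl(2)$). By part (a), $V_3|_{\gg_2} = V_2 \oplus T'$ with $V_2$ a natural $\gg_2$-module. Suppose first that $\gg_1 \simeq \sl(i)$, so that $i \geq 6$ and the natural $\sl(i)$-module $V_1$ is not self-dual. If $\gg_2$ were of type $\oo$ or $\sp$, its natural module $V_2$ would be self-dual, whence $V_2|_{\gg_1} = V_1 \oplus T''$ would be self-dual and $V_1 \simeq V_1^*$ as $\sl(i)$-modules, a contradiction. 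Therefore $\gg_2 \simeq \sl(k)$, and $k = \dim V_2 \geq \dim V_1 = i$.

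Assume instead $\gg_3 \simeq \oo(j)$ (respectively $\sp(2j)$); then $V_3$ carries a non-degenerate symmetric (respectively skew-symmetric) $\gg_3$-invariant form. Because $V_2$ and $T'$ are non-isomorphic simple $\gg_2$-modules, Schur's lemma makes this form block-diagonal with respect to the decomposition $V_3 = V_2 \oplus T'$, and its non-degeneracy transfers to a non-degenerate $\gg_2$-invariant form on $V_2$ of the same symmetry type. Since $\rk \gg_2 > 4$, the natural $\gg_2$-module $V_2$ admits no invariant bilinear form if $\gg_2$ is of type $\sl$, admits a (unique up to scalar) symmetric form if $\gg_2$ is of type $\oo$, and a (unique up to scalar) skew-symmetric form if $\gg_2$ is of type $\sp$. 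This pins down $\gg_2$ as $\oo(k)$ (respectively $\sp(2k)$), and the dimension bound $\dim V_2 \leq \dim V_3$ yields $k \leq j$.
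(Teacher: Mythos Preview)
Your decomposition at the start of (a) and your entire argument for (b) follow the paper's proof closely. The gap is in the key step of (a): showing that the unique nontrivial simple $\gg_2$-summand $M$ of $V_3$ is a \emph{natural} $\gg_2$-module. You propose to extend $\hh_1$ to $\hh_2$, observe that the $\hh_1$-weight support of $M$ lies in $\{0\}\cup\{\text{weights of }V_1\}$, and then finish by a ``case-by-case examination of the classical type of $\gg_2$''. But this examination is never carried out, and it is not straightforward: at this point neither the type of $\gg_2$ nor the embedding $\hh_1\hookrightarrow\hh_2$ is known, so the restriction map $\hh_2^*\to\hh_1^*$ on weights is uncontrolled. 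For a non-standard embedding $\gg_1\hookrightarrow\gg_2$ this map can be complicated, and you would first have to argue separately that the hypothesis on $V_3|_{\gg_1}$ forces the embedding itself to be of a specific shape before any weight bookkeeping becomes tractable.

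The paper bypasses this with a uniform Dynkin-index argument. For any nontrivial module $N$ over a simple Lie algebra $\kk$ one has the trace form $B_N(X,Y)=\tr_N(XY)=t_N B(X,Y)$, and the natural module realizes the minimal value of $t_N$. Since $M|_{\gg_1}=V_1\oplus(\text{trivial})$, one gets $B_M|_{\gg_1}=B_{V_1}$. On the other hand the natural $\gg_2$-module $V_2$ is faithful, so $V_2|_{\gg_1}$ is nontrivial and $B_{V_2}|_{\gg_1}=tB_{V_1}$ with $t\ge 1$ by minimality over $\gg_1$; but also $t=t_{V_2}/t_M\le 1$ by minimality over $\gg_2$. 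Hence $t_M=t_{V_2}$ and $M$ is natural. This single invariant-theoretic computation replaces your unspecified case analysis and works uniformly, independent of the type of $\gg_2$ and of the embedding.
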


\begin{proof} Let $V_3$ be a natural $\gg_1-$module. We have a
  decomposition of $\gg_2-$modules, $V_3=V_1\oplus W$, where 
$V_1$ is a natural 
$\gg_1-$module and $W$ is a trivial $\gg_1$-module. Let $V'\subset
  V_3$ be the minimal $\gg_2$-submodule containing $V_1$. Then 
$V_3=V'\oplus W'$, where $W'$ 
is a complementary $\gg_2$-submodule. Since $\gg_1$ acts trivially on $W'$ and $\gg_2$ is simple, we obtain that $W'$ is a trivial $\gg_2$-module and $V'$ is a 
simple $\gg_2$-module. 

We now prove that $V'$ is a natural $\gg_2$-module. Recall that for an arbitrary non-trivial
module $M$ over a simple Lie algebra $\kk$ the symmetric form $B_M(X,Y)=\mathrm{tr}_M(XY)$ for $X,Y\in \kk$ is non-degenerate. 
Moreover, $B_M=t_MB$, where $B$ is the Killing form. If $M$ is a
simple $\kk$-module with highest weight $\lambda$, then 
$$t_M=\frac{\mathrm{dim}M}{\mathrm{dim}\kk}(\lambda+2\rho,\lambda),$$
where $\rho$ is the half-sum of positive roots and $(\cdot,\cdot)$ is
the form on the weight lattice of $\kk$ induced by $B$. 
It is easy to check that a natural module is a simple module with minimal $t_M$.
Let $V_2$ be a natural $\gg_2$-module. Note that the restriction of $B_{V'}$ on $\gg_1$ equals $B_{V_1}$ and the restriction of
$B_{V_2}$ on $\gg_1$ equals $tB_{V_1}$ for some $t\geq 1$. On the
other hand, $t=\frac{t_{V_2}}{t_{V'}}$. Since 
$t_{V_2}$ is minimal, we have
$t=1$ and $t_{V_2}=t_{V'}$. Hence, $V'$ is a natural module, i.e. a) is proved.

To prove b), note that a classical simple Lie algebra of rank greater than $4$ admits, up to isomorphism, two (mutually dual) 
natural representations when it is of type $\sl$, and one natural representation when it is of type $\oo$ or $\sp$. Moreover, 
in the orthogonal (respectively, symplectic) case the natural module admits 
an invariant symmetric (respectively, skew-symmetric) bilinear form.

Now, assume $\gg_1\simeq \sl(i)$. We claim that $\gg_2\simeq \sl(k)$ for some $i\leq k\leq j$.
Indeed, if $\gg_2$ is not isomorphic to $\sl(k)$, then $V'$ is self-dual. Therefore its restriction 
to $\gg_1$ is self-dual, and we obtain a contradiction as $V_1$ is not a self-dual $\sl(i)$-module for $i\geq 3$.

Finally, assume $\gg_3\simeq \oo(j)$ (respectively, $\sp(2j)$). Then $V'\oplus W'$, and hence $V'$, admits an invariant symmetric 
(respectively, skew-symmetric) form. Therefore $\gg_2\simeq \oo(k)$ (respectively, $\sp(2k)$).
\end{proof}

\begin{corollary}\label{standardex}(cf. [DiP2, Corollary 2.4]) Let $\gg = \sl(V,W)$ and $\gg=\varinjlim \gg_\alpha$ for some directed system
$\{\gg_{\alpha}\}$ of simple finite-dimensional Lie subalgebras
  $\gg_\alpha \subset \gg$. Then there exists a subsystem 
$\{\gg_{\alpha'}\}$ such that
$\gg=\varinjlim \gg_{\alpha'}$ and, for every $\alpha'$,
  $\gg_{\alpha'}=\sl (V_{\alpha'},W_{\alpha'})$ 
for some finite-dimensional non-degenerate pair 
$V_{\alpha'}\subset V,W_{\alpha'}\subset W$. Similarly, if  $\gg =
\oo(V)$ 
(respectively, $\sp(V)$), then 
there exists a subsystem $\{\gg_{\alpha'}\}$ such that
$\gg=\varinjlim \gg_{\alpha'}$ and, for every $\alpha'$, $\gg_{\alpha'}=\oo (V_{\alpha'})$ 
(respectively, $\sp(V_{\alpha'})$) for some finite-dimensional non-degenerate
$V_{\alpha'}\subset V$.
\end{corollary}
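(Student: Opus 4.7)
My plan is to treat the case $\gg = \sl(V, W)$ in detail; the cases $\gg = \oo(V)$ and $\gg = \sp(V)$ will be handled by the same scheme with minor modifications. The strategy is to show that the subset $\mathcal{S} \subset \{\gg_\alpha\}$ consisting of those $\gg_\alpha$ which \emph{already} have the form $\sl(V_\alpha, W_\alpha)$ for some finite-dimensional non-degenerate pair is cofinal in the directed system; this $\mathcal{S}$ then furnishes the required subsystem $\{\gg_{\alpha'}\}$.

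Given a fixed $\gg_{\alpha_0}$, I will produce $\gg_\beta \in \mathcal{S}$ with $\gg_\beta \supseteq \gg_{\alpha_0}$. Since $\gg_{\alpha_0}$ is finite-dimensional and $\sl(V, W)$ is the union of the $\sl(V_f, W_f)$ over finite-dimensional non-degenerate pairs, I first enclose $\gg_{\alpha_0}$ in some $\sl(V_f, W_f)$. By directedness of the system, and moving to a sufficiently large index, I choose $\gg_\beta$ in the system with $\gg_\beta \supseteq \sl(V_f, W_f)$ and $\mathrm{rk}\,\gg_\beta > 4$; then $\gg_\beta$ itself sits inside some $\sl(V_g, W_g)$, enlarged so that $V_f \subseteq V_g$ and $W_f \subseteq W_g$. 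This produces the sandwich
$$\sl(V_f, W_f) \subseteq \gg_\beta \subseteq \sl(V_g, W_g).$$

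The natural $\sl(V_g, W_g)$-module $V_g$ decomposes as $V_f \oplus (V_g \cap W_f^\perp)$ under $\sl(V_f, W_f)$, i.e.\ as natural plus trivial; \refle{lemma:1.4.10}(a) then yields a $\gg_\beta$-decomposition $V_g = V' \oplus V_0$ with $V'$ natural and $V_0$ trivial. The same argument applied to $W_g$ gives $W_g = W' \oplus W_0$ with $W'$ dual-natural. Since $V_0$ and $W_0$ are $\gg_\beta$-trivial whereas $V'$ and $W'$ are simple non-trivial $\gg_\beta$-modules, $\gg_\beta$-invariance of the pairing $V_g \times W_g \to \CC$ forces $\langle V', W_0 \rangle = \langle V_0, W' \rangle = 0$ (using that $\gg_\beta \cdot V' = V'$). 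Hence the pairing splits as a direct sum of pairings on $V' \times W'$ and on $V_0 \times W_0$, and non-degeneracy of the ambient pairing forces $(V', W')$ to be non-degenerate.

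It remains to identify $\gg_\beta$ with $\sl(V', W')$. Since $\gg_\beta$ annihilates $V_0$ and the complement of $V_g$ in $V$ (and symmetrically on the $W$-side), its image in $V \otimes W$ lies in $V' \otimes W' = \gl(V', W')$, and tracelessness yields $\gg_\beta \subseteq \sl(V', W')$. \refle{lemma:1.4.10}(b) then forces $\gg_\beta$ to be of type $\sl$ with natural module $V'$, and a dimension count delivers $\gg_\beta = \sl(V', W')$, placing $\gg_\beta$ in $\mathcal{S}$. For $\gg = \oo(V)$ or $\sp(V)$ the scheme is identical once one checks that $V'$ inherits a non-degenerate invariant symmetric (respectively antisymmetric) form, which follows from the same invariance-plus-splitting argument applied to the bilinear form on $V_g$. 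The main obstacle I anticipate is the non-degeneracy verification for $(V', W')$: simply extracting the natural submodule does not give the dual pair for free, and one must exploit invariance under $\gg_\beta$ to split off the trivial summands compatibly; once this is in hand, the identification with $\sl(V', W')$ reduces to a routine dimension count.
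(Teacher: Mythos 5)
Your argument is correct and is essentially the paper's own proof: the paper likewise passes to the cofinal subsystem of all $\gg_{\alpha'}$ containing a fixed $\sl(V_f,W_f)$, sandwiches each one as $\sl(V_f,W_f)\subset\gg_{\alpha'}\subset\sl(V_f',W_f')$, and invokes Lemma \ref{lemma:1.4.10}, merely omitting the details you supply (splitting off the trivial summands, non-degeneracy of the pair $(V',W')$, and the dimension count identifying $\gg_\beta$ with $\sl(V',W')$ rather than just $\gg_\beta\simeq\sl(k)$). One small adjustment: the rank hypothesis in Lemma \ref{lemma:1.4.10}(b) is imposed on the \emph{bottom} algebra of the sandwich, so you should first enlarge $V_f,W_f$ so that $\mathrm{rk}\,\sl(V_f,W_f)>4$, rather than only requiring $\mathrm{rk}\,\gg_\beta>4$.
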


\begin{proof}
 Let $\gg = \sl(V,W)$. One fixes a Lie subalgebra $\sl(V_f,W_f) \subset \gg$, where $V_f \subset V, W_f \subset W$ is a finite-dimensional non-degenerate pair, and considers the directed subsystem $\{\gg_{\alpha'}\}$ of all $\gg_{\alpha'}$ such that $\sl(V_f, W_f) \subset \gg_{\alpha'}$. There exists another finite-dimensional non-degenerate pair $V_f',W_f'$ such that $\sl(V_f,W_f)\subset \gg_{\alpha'}\subset \sl(V_f',W_f')$. Then, by Lemma \ref{lemma:1.4.10}, $\gg_{\alpha'} = \sl(V_{\alpha'},W_{\alpha'})$ for appropriate $V_{\alpha'} \subset V, W_{\alpha'} \subset W$. The cases $\gg = \oo(V), \sp(V)$ are similar.
\end{proof}

\vspace{0.33cm}

\begin{proof}\textit{ of Proposition \ref{prop:1.29.9}}
We consider the case $\gg = \sl(V,W)$ and leave the remaining cases to
the reader. Let $\gg=\sl(V,W)$ be isomorphic to 
$\sl(V',W')$. Then $\gg=\varinjlim \sl(V_f,W_f)$ over all finite-dimensional 
non-degenerate pairs $V_f\subset V, W_f\subset W$, and at the same
time $\gg=\varinjlim\sl(V'_f,W'_f)$ 
over all finite-dimensional 
non-degenerate pairs $V'_f\subset V', W'_f\subset W'$. By Corollary \ref{standardex} and Lemma \ref{lemma:1.4.10}, for each $V_f\subset V,W_f\subset W$ one can 
find $V'_f\subset V',W_f'\subset W'$ and an embedding of Lie algebras
$\sl(V_f,W_f)\subset\sl(V_f',W_f')$ 
as in Lemma \ref{lemma:1.4.10}. That implies
the existence of embeddings $V_f\hookrightarrow V_f',
W_f\hookrightarrow W_f'$ or  
$V_f\hookrightarrow W_f', W_f\hookrightarrow V_f'$ preserving the pairing. After 
a twist by transposition we may 
assume that $V_f\hookrightarrow V_f', W_f\hookrightarrow
W_f'$. Therefore we have embeddings 
$V=\varinjlim V_f\hookrightarrow V', W=\varinjlim W_f\hookrightarrow W'$ preserving the pairing. 
On the other hand, both maps are surjective since
$\sl(V',W')=\varinjlim\sl(V_f,W_f)$. 
Therefore  the linear systems $V\times W\rightarrow \CC$ and 
$V' \times W' \rightarrow \CC$ are isomorphic.
\end{proof}

\vspace{0.33cm}

Assume next that $\gg$ is an arbitrary locally finite locally simple Lie algebra. If we can choose a Cartan subalgebra $\hh_\alpha\subset\gg_\alpha$, such that $\hh_\alpha\hookrightarrow\hh_\beta$ for 
any embedding $\gg_\alpha \hookrightarrow\gg_\beta$, then $\hh:=\varinjlim\hh_\alpha$ is called a {\it local Cartan subalgebra}.

In general, a local Cartan subalgebra may not exist. For example, the following proposition
implies that the Lie algebra $\gg=\sl(V,V^*)$  does not have a local Cartan subalgebra. 

\begin{prop}\label{prop:1.4.10}
 Let $\gg = \sl(V,W)$. Then a local Cartan subalgebra of $\gg$ exists if and only if 
$V$ admits a basis $\left\lbrace v_\gamma \right\rbrace$ such that $W= \span\left\lbrace v_{\tilde\gamma}^* \right\rbrace$, 
where $v_{\tilde\gamma}^*(v_\gamma) = \delta_{{\tilde{\gamma}}\gamma}$. In this case, every local Cartan subalgebra of 
$\gg$ is of the form $\span\left\lbrace v_\gamma \otimes v_\gamma^* - v_{\tilde\gamma} \otimes v_{\tilde\gamma}^*\right\rbrace_{\gamma,\tilde\gamma}$ 
for a basis $\left\lbrace v_\gamma \right\rbrace$ as above.
\end{prop}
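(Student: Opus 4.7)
Given the basis $\{v_\gamma\}_{\gamma\in\Gamma}$ of $V$ with $W=\span\{v_\gamma^*\}$, for each finite $F\subset\Gamma$ the pair $(V_F,W_F):=(\span\{v_\gamma\}_{\gamma\in F},\span\{v_\gamma^*\}_{\gamma\in F})$ is finite-dimensional and non-degenerate. Any element of $\sl(V,W)$ is a finite-rank traceless operator, hence supported on only finitely many basis vectors, and therefore $\gg = \varinjlim_F \sl(V_F,W_F)$. The subspaces $\hh_F := \span\{v_\gamma\otimes v_\gamma^*-v_{\tilde\gamma}\otimes v_{\tilde\gamma}^*:\gamma,\tilde\gamma\in F\}$ are the diagonal traceless matrices in this basis and so are compatible Cartans of $\sl(V_F,W_F)$. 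Their direct limit $\hh:=\bigcup_F\hh_F$ is a local Cartan of $\gg$ of the asserted form.

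\textbf{``$\Rightarrow$''.} Let $\hh=\varinjlim\hh_\alpha$ with $\hh_\alpha$ a Cartan of $\gg_\alpha$. By Corollary \ref{standardex} I may pass to a cofinal subsystem with $\gg_\alpha=\sl(V_\alpha,W_\alpha)$; the $\hh_\alpha$ remain Cartans of the refined system and $\hh=\varinjlim\hh_\alpha$ still holds. For each $\alpha$ the $\gg_\alpha$-module $V$ splits as $V=V_\alpha\oplus U_\alpha$ with $U_\alpha:=\{v\in V:\langle w,v\rangle=0\text{ for all }w\in W_\alpha\}$ a trivial summand (projection along the $W_\alpha$-pairing), so under $\hh_\alpha$ the nonzero weight spaces of $V$ are one-dimensional and lie in $V_\alpha$, while the zero weight space of $V$ equals $U_\alpha$. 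The key observation is that any one-dimensional $\hh_\alpha$-eigenline in $V$ is $\hh$-stable, since $\hh$ is abelian and contains $\hh_\alpha$, so each such line is already a common $\hh$-eigenline. Decomposing an arbitrary $v\in V_\alpha$ into $\hh_\alpha$-eigenvectors therefore refines to a decomposition into $\hh$-weight vectors, giving $V=\bigoplus_\lambda V^\lambda$. Choosing $\alpha$ with $\lambda|_{\hh_\alpha}\neq 0$ places $V^\lambda$ inside the 1-dim $\lambda|_{\hh_\alpha}$-weight space of $V_\alpha$, so $\dim V^\lambda\leq 1$, while $V^0=0$ because no $V_\alpha$ carries the zero $\hh_\alpha$-weight.

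Running the same analysis on $W$ yields $W=\bigoplus_\mu W^\mu$ with one-dimensional nonzero weight spaces. The $\hh$-invariance of the pairing forces $\langle W^\mu,V^\lambda\rangle=0$ unless $\mu+\lambda=0$, and its non-degeneracy then matches the weight sets of $V$ and $W$ bijectively as negatives. Picking $v_\lambda\in V^\lambda$ and normalizing $v_\lambda^*\in W^{-\lambda}$ so that $v_\lambda^*(v_\lambda)=1$ produces the required basis of $V$ with dual basis spanning $W$. Finally, any $h\in\hh$ acts diagonally on $V$ in this basis and is finite-rank traceless in $\sl(V,W)$, so $h=\sum c_\gamma v_\gamma\otimes v_\gamma^*$ with $\sum c_\gamma=0$; conversely, for $v_\gamma,v_{\tilde\gamma}\in V_\alpha$ the parallel weight analysis on $W$ forces $v_\gamma^*,v_{\tilde\gamma}^*\in W_\alpha$, whence $v_\gamma\otimes v_\gamma^*-v_{\tilde\gamma}\otimes v_{\tilde\gamma}^*$ is a diagonal traceless element of $\sl(V_\alpha,W_\alpha)$, i.e.\ it lies in $\hh_\alpha\subset\hh$. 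The main technical obstacle is the step promoting one-dimensional $\hh_\alpha$-eigenlines in $V$ to genuine $\hh$-eigenlines; everything else is routine bookkeeping with weights and the non-degenerate pairing.
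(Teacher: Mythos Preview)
Your proof is correct and follows essentially the same approach as the paper's: reduce via Corollary~\ref{standardex} to a system with $\gg_\alpha=\sl(V_\alpha,W_\alpha)$, use abelianness of $\hh$ to promote the one-dimensional $\hh_\alpha$-weight lines in $V_\alpha$ to $\hh$-weight lines, obtain the weight decompositions of $V$ and $W$, and match them via the pairing. Your write-up is in fact more explicit than the paper's at the key step (the paper compresses the promotion of $\hh_\alpha$-eigenlines to $\hh$-eigenlines into the single sentence ``Since $\hh$ is abelian, we have $\hh\cdot V_\alpha=V_\alpha$''), but the underlying argument is the same.
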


\begin{proof}
By Corollary \ref{standardex} we may assume $\gg=\sl(V,W)=\varinjlim
\gg_\alpha=\varinjlim\sl(V_\alpha, W_\alpha)$, 
where $V_\alpha \subset V$, $W_\alpha \subset W$ 
are certain non-degenerate finite-dimensional pairs, and that
$\hh=\varinjlim \hh_\alpha$ where $\hh_\alpha$ is a 
Cartan subalgebra of $\gg_\alpha$. Note that
for any $\alpha$ we have $\hh_\alpha\cdot V_\alpha=V_\alpha$ and  $\hh_\alpha\cdot  W_\alpha=W_\alpha$. Since $\hh$ is abelian, we have $\hh\cdot V_\alpha=V_\alpha$ and 
$\hh\cdot W_\alpha=W_\alpha$. Therefore $V$ and $W$ are semisimple $\hh-$modules.
This means that $V$ is the direct sum of non-trivial one-dimensional
$\hh-$submodules $V_\gamma$, i.e. 
$V=\bigoplus_\gamma\,V_\gamma$; similarly, 
$W = \bigoplus_{\gamma'}\,W_{\gamma'}$. 
Since however, for any $\alpha$, the spaces $V_{\alpha}$ and $W_{\alpha}$ are dual to each other, 
$\gamma'$ and $\gamma$ run over the same set of indices and
$W_\gamma(V_{\tilde\gamma}) \neq 0$ precisely for 
$\gamma = \tilde{\gamma}$. 
This yields a basis $v_\gamma$ as required: $v_\gamma$ can be chosen
as any non-zero vector in $V_\gamma$ and 
$v^*_{\gamma}$ is the unique vector in 
$W_{\gamma}$ with $v_{\gamma}^*(v_{\gamma})= 1$. 
Finally, 
$\hh = \span\left\lbrace v_\gamma \otimes v_\gamma^* - v_{\tilde\gamma} \otimes v_{\tilde\gamma}^*\right\rbrace$ as, clearly, 
$\hh\cap\gg_{\alpha}=\span\left\lbrace v_\gamma \otimes v_\gamma^* -v_{\tilde\gamma} \otimes v_{\tilde\gamma}^*\right\rbrace$ 
for $v_\gamma, v_{\tilde\gamma}\in V_\alpha$.

In the other direction, given a basis $v_\gamma$ of $V$ such that $\left\lbrace v_\gamma^* \right\rbrace$ is a basis of $W$, it is clear that 
$\gg = \varinjlim\,\sl\left(\span\left\lbrace v_\gamma\right\rbrace_{\gamma \in A}, \span\left\lbrace
v_\gamma^*\right\rbrace_{\gamma \in A}\right)$ 
for all 
finite sets of indices $A$, and that 
$\hh = \varinjlim \left(\hh \cap \span\left\lbrace v_{\gamma} \otimes v_\gamma^* - v_{\tilde\gamma} \otimes v_{\tilde\gamma}^*\right\rbrace_{\gamma,\tilde\gamma \in A}\right)$.
\end{proof}


In \cite{DPSn} (and also in earlier work, see the references in
\cite{DPSn}) \textit{Cartan subalgebras} are 
defined as maximal toral subalgebras of $\gg$ 
(i.e. as subalgebras each vector in which is
ad-semisimple). \textit{Splitting} 
Cartan subalgebras are Cartan subalgebras for which the adjoint 
representation is semisimple. It is shown in \cite{pstr} that a
countable dimensional locally finite 
locally simple Lie algebra $\gg$ admits a splitting 
Cartan subalgebra if and only if $\gg \simeq \sl(\infty), \oo(\infty),
\sp(\infty)$. Proposition \ref{prop:1.4.10} determines when Lie algebras of the form $\gg = \sl(V,W), \oo(V), \sp(V)$ admit local Cartan subalgebras and implies that the notions of local Cartan subalgebra and of splitting Cartan subalgebra coincide for these Lie algebras. 

\vspace{0.33cm}

In what follows, we denote by $V, V_*$ a pair of infinite-dimensional spaces (of not necessarily countable dimension) arising from a linear system $V \times V_* \rightarrow \CC$ for which there is a basis $\lbrace v_{\gamma}\rbrace$ of $V$ such that $V_* = \mathrm{span}(\lbrace v_{\gamma}^* \rbrace)$.

\section{The category $\mathrm{Int}_{\gg}$}\label{sect:2} 

Let $\gg$ be an arbitrary locally simple locally finite Lie algebra. An \textit{integrable} $\gg-$module is a $\gg-$module 
$M$ which is locally finite as a module over any finite-dimensional subalgebra $\gg'$ of $\gg$. In other words, $\mathrm{dim}U(\gg')\cdot m < \infty\;\;\, \forall m \in M$. We denote the category of integrable $\gg-$modules by $\mathrm{Int}_{\gg}$ : $\mathrm{Int}_{\gg}$ is a full subcategory of the category $\gg-$mod of all $\gg-$modules. It is clear that $\mathrm{Int}_{\gg}$ is an abelian category and a monoidal category with respect to usual tensor product. Note that the adjoint representation of $\gg$ is an object of $\mathrm{Int}_{\gg}$.

The \textit{functor of $\gg-$integrable vectors}
\begin{eqnarray}\label{func_int_vec}
 \Gamma_{\gg} &:& \gg-\mathrm{mod} \rightsquigarrow \mathrm{Int}_{\gg},\\
\Gamma_{\gg}(M) &:=& \left\lbrace m \in M\; | \;\mathrm{dim}U(\gg')\cdot m < \infty\; \text{for all finite-dimensional subalgebras}\; \gg'\subset \gg\right\rbrace
\end{eqnarray}
is a well defined left-exact functor. This follows from the fact that the functor of $\gg'-$finite vectors $\Gamma_{\gg'}$ is well defined for any finite-dimensional subalgebra $\gg' \subset \gg$, see for instance \cite{Z}, and that $\gg$ equals the direct limit of its finite-dimensional subalgebras. 

\begin{theo}\label{int1}
a) Let $M$ be an object of $\mathrm{Int}_{\gg}$. Then $\Gamma_{\gg}(M^*)$ is an injective object of $\mathrm{Int}_{\gg}$.

b) $\mathrm{Int}_{\gg}$ has enough injectives. More precisely, for any object $M$ of $\mathrm{Int}_{\gg}$ there is a canonical injective homomorphism of $\gg-$modules
\begin{equation}\label{can_inj_hom}
 M \rightarrow \Gamma_{\gg} (\Gamma_{\gg}(M^*)^*).
\end{equation}
\end{theo}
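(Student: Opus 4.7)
The plan is to deduce part (a) from exactness properties of the $\gg$-coinvariants functor, and part (b) from a direct point-separation argument using nested $\gg_\alpha$-invariant complements.

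For part (a), I would observe, for any $N \in \Int_\gg$, the natural chain of isomorphisms
\[
\Hom_\gg(N, \Gamma_\gg(M^*)) \;\cong\; \Hom_\gg(N, M^*) \;\cong\; \Hom_\gg(N \otimes M, \CC) \;\cong\; ((N \otimes M)_\gg)^{*},
\]
using respectively that $\Gamma_\gg$ is right adjoint to the inclusion $\Int_\gg \hookrightarrow \gg\text{-mod}$ (so maps from integrable objects factor through $\Gamma_\gg$), the Hom-tensor adjunction for Lie algebra modules, and the identification of $\gg$-linear maps into the trivial module $\CC$ with linear functionals on the $\gg$-coinvariants $(-)_\gg = (-)/\gg(-)$. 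Since $-\otimes M$ and $(-)^{*}$ are exact, injectivity of $\Gamma_\gg(M^*)$ is equivalent to exactness of $(-)_\gg$ on $\Int_\gg$. For this I would decompose $X_\gg = X/\gg X = \varinjlim_\alpha X_{\gg_\alpha}$ using $\gg = \varinjlim \gg_\alpha$, note that each $(-)_{\gg_\alpha}$ is exact on $\Int_\gg$ because $\Int_{\gg_\alpha}$ is semisimple for the finite-dimensional simple $\gg_\alpha$ (so $\gg_\alpha$-coinvariants coincide with the canonical trivial summand), and conclude by exactness of filtered colimits of complex vector spaces.

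For part (b), the assignment $m \mapsto \phi_m$ with $\phi_m(f) := f(m)$ is immediately $\gg$-equivariant, since $x \phi_m = \phi_{xm}$; and $\phi_m$ lies in $\Gamma_\gg(\Gamma_\gg(M^*)^*)$ because $\dim U(\gg') \phi_m \le \dim U(\gg') m < \infty$ by integrability of $M$. Injectivity then reduces to showing that $\Gamma_\gg(M^*)$ separates points of $M$. Given $0 \ne m \in M$, I would pass to a cofinal chain $\gg_{\alpha_1} \subset \gg_{\alpha_2} \subset \cdots$ and inductively construct $\gg_{\alpha_i}$-invariant complements $W_i$ to the finite-dimensional submodules $N_i := U(\gg_{\alpha_i}) m$, arranged so that $W_{i+1} \subset W_i$. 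Concretely, at each step I would first fix a $\gg_{\alpha_{i+1}}$-decomposition $M = \bigoplus_k T_k$ into finite-dimensional $\gg_{\alpha_{i+1}}$-simples (so that $W_{i+1}$ is the sum of the $T_k$ on which $m$ has zero component), and then refine to a $\gg_{\alpha_i}$-decomposition by further splitting each $T_k$ as a $\gg_{\alpha_i}$-module; the resulting $W_i$, defined as the sum of the $\gg_{\alpha_i}$-simples on which $m$ has zero component, automatically contains $W_{i+1}$. Choosing any $f: M \to \CC$ vanishing on $W_1 = \bigcup_i W_i$ with $f(m) \ne 0$ (possible since $m \notin W_1$), one checks that $f$ factors through each finite-dimensional quotient $M/W_i$, so $U(\gg_{\alpha_i}) f$ is finite-dimensional for every $i$, hence $f \in \Gamma_\gg(M^*)$.

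The main obstacle is producing the globally compatible tower of decompositions: the refinement step works cleanly for any finite initial segment of the chain, but passing to the full chain --- and, a fortiori, handling the uncountable-dimensional case where the directed system is not linearly ordered --- requires Zorn's lemma or a K\"onig-type compactness argument to pick at each level a decomposition consistent with all decompositions above. Once this tower is in hand, the construction of the separating functional is immediate, so essentially the entire depth of part (b) is in establishing this compatibility.
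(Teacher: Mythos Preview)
The paper does not actually prove this theorem; it cites [PS, Proposition~3.2 and Corollary~3.3] for the countable-dimensional case and asserts the argument extends. So your proposal must be judged on its own.

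Your argument for part (a) is correct and clean. The reduction of injectivity of $\Gamma_\gg(M^*)$ to exactness of $(-)_\gg$ on $\Int_\gg$, followed by the identification $X_\gg=\varinjlim_\alpha X_{\gg_\alpha}$ and the observation that each $(-)_{\gg_\alpha}$ is exact on locally finite modules (coinvariants coinciding with the trivial isotypic summand), is complete and works without any countability hypothesis.

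Part (b) has a genuine gap at exactly the point you flag. The inductive step you describe runs in the wrong direction: you produce $W_i$ \emph{from} $W_{i+1}$ by refining a $\gg_{\alpha_{i+1}}$-decomposition, but an induction along an increasing chain with no top element must go forward, producing $W_{i+1}\subset W_i$ from $W_i$. That forward step is \emph{not} always possible for a $W_i$ of your form. Concretely, take $\gg_{\alpha_i}=\sl_2\subset\gg_{\alpha_{i+1}}=\sl_3$ (upper-left block), $M=\bigoplus_{n\ge 1}\mathrm{Sym}^n(\CC^3)$, and $m=e_1$ a generator of the one-dimensional $\sl_2$-trivial summand of $\mathrm{Sym}^1(\CC^3)$. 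Choose the $\sl_2$-decomposition of $M^{\sl_2}=\bigoplus_n\CC e_n$ as $\CC e_1\oplus H$ with $H=\{(a_n):a_1=\sum_{n\ge 2}a_n\}$; then your $W_i=H\oplus\gg_{\alpha_i}M$ contains \emph{no} nonzero $\sl_3$-submodule at all (the $\sl_3$-submodules are $\bigoplus_{n\in A}\mathrm{Sym}^n(\CC^3)$, and no $e_n$ lies in $H$), so there is no admissible $W_{i+1}$. Thus the choice of decomposition at level $i$ must already anticipate level $i+1$, and your suggested K\"onig-type compactness does not apply because the tree of decompositions is infinitely branching; nor is it clear how to set up Zorn, since the restriction map from compatible $(i{+}1)$-towers to $i$-towers is not surjective. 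You have correctly located the entire difficulty, but the proposal does not yet supply the missing construction.
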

\begin{proof} In \cite{ps}, see Proposition $3.2$ and Corollary $3.3$, the proof is given under the assumption that $\gg$ is countable dimensional. The reader can check that this assumption is inessential.
\end{proof}

\vspace*{0.5cm}
\section{Five subcategories of $\mathrm{Int}_{\gg}$}\label{sect:3}

\subsection{The category $\mathrm{Int}_{\gg}^\alg$}\label{sect:3.1}
We start by defining the full subcategory $\mathrm{Int}_{\gg}^\alg \subset \mathrm{Int}_{\gg}$. Its objects are integrable $\gg-$modules $M$ such that, for any simple finite-dimensional subalgebra $\gg'\subset\gg$, the restriction of $M$ to $\gg'$ is a direct sum of finitely many $\gg'-$isotypic components. Clearly, if $\mathrm{dim}\;M=\infty$, at least one of these isotypic components must be infinite dimensional. If $\gg$ is diagonal, the adjoint representation of $\gg$ is easily seen to be an object of $\mathrm{Int}_{\gg}^\alg$.  

The following proposition provides equivalent definitions of $\mathrm{Int}_{\gg}^\alg$. 

\begin{prop}\label{alg1} a) $M\in\mathrm{Int}_{\gg}^\alg$ iff $M$ and $M^*$ are integrable.

b) An integrable $\gg-$module $M$ is an object of $\mathrm{Int}_{\gg}^\alg$ iff for any $X\in\gg$ there exists a non-zero polynomial $p(t)\in\CC[t]$ such that $p(X)\cdot M=0$.
\end{prop}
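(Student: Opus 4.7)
The plan is to prove (b) first and then deduce (a) from it. For (b), $\Rightarrow$: given $X \in \gg$, local simplicity places $X$ in a simple finite-dimensional subalgebra $\gg'' = \gg_\alpha$ of the defining directed system. The hypothesis gives $M|_{\gg''} \cong \bigoplus_{i=1}^n W_i \otimes U_i$ with finitely many isotypic components and each $W_i$ finite-dimensional simple, so $p(X) \cdot M = 0$ for $p := \prod_i p_i$, where $p_i \in \CC[t]$ is the minimal polynomial of $X$ acting on $W_i$. For $\Leftarrow$: fix a simple finite-dimensional $\gg'' \subset \gg$ with Cartan subalgebra $\hh''$. The hypothesis forces each $H \in \hh''$ to act on $M$ with only finitely many eigenvalues; choosing a basis $H_1,\ldots,H_k$ of $\hh''$, every $\hh''$-weight of $M$ is determined by its tuple of values in the finite product $\prod_j \mathrm{Spec}(H_j|_M)$, so the weight set is finite. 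By Weyl's theorem (applicable since $M$ is integrable over the simple $\gg''$), $M|_{\gg''}$ is semisimple; finitely many weights then force finitely many isotypic components.

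For (a), $\Rightarrow$: integrability of $M$ is by definition. To check integrability of $M^*$, embed an arbitrary finite-dimensional $\gg' \subset \gg$ into a simple $\gg'' = \gg_\alpha$ of the directed system. The $\mathrm{Int}_\gg^\alg$ hypothesis implies that $U(\gg'')/\mathrm{Ann}_{U(\gg'')}(M)$ embeds into the finite-dimensional algebra $\prod_i \End_\CC(W_i)$, so $U(\gg'')$ acts on $M$ through a finite-dimensional algebra. Letting $\sigma$ denote the principal anti-automorphism of $U(\gg'')$, one checks $\mathrm{Ann}_{U(\gg'')}(M^*) = \sigma(\mathrm{Ann}_{U(\gg'')}(M))$, which therefore also has finite codimension. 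Hence $U(\gg'')$ acts on $M^*$ through a finite-dimensional algebra too, yielding $\dim U(\gg')\phi \leq \dim U(\gg'')\phi < \infty$ for every $\phi \in M^*$.

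For (a), $\Leftarrow$: assume $M$ and $M^*$ are integrable and suppose for contradiction that $M \notin \mathrm{Int}_\gg^\alg$. Then some simple $\gg'' \subset \gg$ has $M|_{\gg''}$ with infinitely many isotypic components and hence infinitely many $\hh''$-weights, so some $H \in \hh''$ admits an infinite sequence of distinct eigenvalues $\lambda_1, \lambda_2, \ldots$ on $M$. Pick eigenvectors $m_n \in M$ with $Hm_n = \lambda_n m_n$, extend them to a basis of $M$, and define $\phi \in M^*$ by $\phi(m_n) = 1$ and $\phi = 0$ on the complementary basis vectors. A direct computation gives $(H^k \phi)(m_n) = (-\lambda_n)^k$, so a Vandermonde argument on the distinct $\lambda_n$ shows $\phi, H\phi, H^2\phi, \ldots$ are linearly independent in $M^*$; this forces $\dim U(\gg'')\phi = \infty$, contradicting integrability of $M^*$.

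The main obstacle is the $\Rightarrow$ direction of (a): mere integrability of $M$ over $\gg''$ does not suffice to control $M^*$ (an infinite direct sum of pairwise non-isomorphic simple finite-dimensional $\gg''$-modules has a non-integrable dual, as the $\Leftarrow$ step of (a) illustrates). The finite-isotypic-component hypothesis is exactly what compresses the action of $U(\gg'')$ through a finite-dimensional algebra, a property manifestly preserved by $\sigma$ and hence transferred to $M^*$.
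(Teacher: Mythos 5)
Your proof is correct and follows essentially the same route as the paper: both arguments reduce to the decomposition of $M$ into isotypic components over a simple finite-dimensional subalgebra, observe that finitely many components force the action (and hence the dual action) to factor through something finite-dimensional, and that infinitely many components produce a semisimple element with infinitely many eigenvalues, which obstructs both the polynomial condition in b) and the integrability of $M^*$ in a). Your Vandermonde construction of a functional with infinite $U(\gg'')$-orbit and the anti-automorphism argument merely spell out the step the paper dismisses as ``straightforward to check'' for $M^*=\prod_\alpha M_\alpha^*$.
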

\begin{proof} a) In the countable-dimensional case the statement is proven in [PS, Lemma 4.1].
In general, let $\gg'\subset\gg$ be a finite-dimensional simple subalgebra and $M=\oplus_\alpha M_\alpha$ be the decomposition of $M$ into $\gg'$-isotypic components. Then it is straightforward to check that $M^*=\prod_\alpha M_\alpha^*$ is an integrable $\gg'$-module iff the direct product is finite. This proves a), since a $\gg-$module is integrable iff it is $\gg'-$integrable for all finite-dimensional Lie subalgebras $\gg' \subset \gg$.

b) Let $M\in \mathrm{Int}_{\gg}^\alg$. Any $X\in\gg$ lies in some finite-dimensional Lie subalgebra $\gg' \subset \gg$. For each $\gg'$-isotypic component $M_i$ of $M$ there exists $p_i(t)$ such that $p_i(X) \cdot M_i = 0$. Since there are finitely many $\gg'-$isotypic components, we can set $p(t)=\prod_i p_i(t)$. Then $p(X) \cdot M = 0$. 

On the other hand, if $M\notin  \mathrm{Int}_{\gg}^\alg$ then there are infinitely many isotypic components for some finite-dimensional simple $\gg'\subset\gg$. That implies the existence of a semisimple $X\in \gg'$ which has infinitely many eigenvalues in $M$. Therefore $p(X) \cdot M \neq 0$ for any $0 \neq p(t) \in \CC[t]$. 
\end{proof}

It is obvious that  $\mathrm{Int}_{\gg}^\alg$ is an abelian monoidal subcategory of $\gg-$mod. It is also closed under dualization. 

\begin{prop}\label{alg2}  $\mathrm{Int}_{\gg}^\alg$ contains a non-trivial module iff $\gg$ is diagonal.
\end{prop}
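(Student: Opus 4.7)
The ``if'' direction is already noted in the text: if $\gg=\varinjlim\gg_\alpha$ is a diagonal directed system, each embedding $\gg_\alpha\hookrightarrow\gg_\beta$ restricts the adjoint $\gg_\beta$ to $\gg_\alpha$ as a sum of constituents built from tensor powers of $V_{\gg_\alpha}$, $V_{\gg_\alpha}^*$, and the trivial module, producing only finitely many $\gg_\alpha$-isotypic types; hence $\gg$ itself is a non-trivial object of $\mathrm{Int}_\gg^\alg$.

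For the converse, I would assume $M\in\mathrm{Int}_\gg^\alg$ is non-trivial and $\gg$ is not diagonal, aiming for a contradiction. By Corollary~\ref{standardex} fix $\gg=\varinjlim\gg_\alpha$ with $\gg_\alpha$ classical simple; pick $\gg_0$ acting non-trivially on $M$ and let $S_0$ be the (finite) set of simple $\gg_0$-types appearing in $M|_{\gg_0}$. A preliminary observation: if $\gg_0\subset\gg_\alpha$ were diagonal for every $\gg_\alpha\supset\gg_0$ in the system, then $\gg$ would itself be diagonal, because any intermediate $\gg_\alpha\subset\gg_\beta$ with a non-diagonal constituent $L$ in $V_{\gg_\beta}|_{\gg_\alpha}$ would, via $L\hookrightarrow V_{\gg_\alpha}^{\otimes p}\otimes V_{\gg_\alpha}^{*\otimes q}$ and the diagonal form of $V_{\gg_\alpha}|_{\gg_0}$, produce a non-diagonal constituent in $V_{\gg_\beta}|_{\gg_0}$. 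Iterating, I extract a cofinal chain $\gg_0\subset\gg_1\subset\gg_2\subset\cdots$ in which every consecutive embedding is non-diagonal.

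The heart of the argument is a \emph{size-amplification} estimate. Define the tensor size $s(L)$ of a simple $\gg_\alpha$-module $L$ as the minimal $p+q$ such that $L\hookrightarrow V_{\gg_\alpha}^{\otimes p}\otimes V_{\gg_\alpha}^{*\otimes q}$, with $s(\mathrm{triv})=0$; the definition respects duality (so e.g.\ $s(\Lambda^{n-1}V_{\sl(n)})=1$), and $c:=\max_{L\in S_0}s(L)<\infty$. The amplification claim is: for every non-diagonal embedding $\gg'\hookrightarrow\gg''$ of classical simple Lie algebras of sufficient rank and every non-trivial simple $\gg''$-module $N$, the restriction $N|_{\gg'}$ contains a simple constituent of tensor size at least $2\,s(N)$. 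The mechanism is that $V_{\gg''}|_{\gg'}$ must contain a simple constituent $L$ with $s(L)\geq 2$ by non-diagonality; choosing Borel subalgebras so that the highest weight vector of $V_{\gg''}$ sits in $L$, the restriction of the highest weight vector of $N\hookrightarrow V_{\gg''}^{\otimes p}\otimes V_{\gg''}^{*\otimes q}$ generates a $\gg'$-submodule of highest weight $p\lambda+q\lambda^*$ (with $\lambda$ the highest weight of $L$), yielding size at least $(p+q)\,s(L)\geq 2\,s(N)$. Iterating along the non-diagonal chain and recursively choosing non-trivial simple $\gg_j$-constituents $N_j$ of $M|_{\gg_j}$ with $s(N_{j-1})\geq 2\,s(N_j)$ produces $N_0\in S_0$ with $s(N_0)\geq 2^k$; choosing $k$ with $2^k>c$ contradicts the finiteness of $S_0$ and forces $\gg$ to be diagonal.

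The main obstacle will be a rigorous proof of the size-amplification claim with sufficient uniformity across the classical types $\sl,\oo,\sp$ and across the possible non-diagonal constituents $L$ of $V_{\gg''}|_{\gg'}$, together with consistent use of the duality-aware tensor size to avoid pathologies like $\Lambda^{n-1}V\simeq V^*$; a subsidiary issue at low ranks is readily handled by the rank $>4$ hypothesis already appearing in Lemma~\ref{lemma:1.4.10}.
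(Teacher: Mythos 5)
Your ``if'' direction and your reduction to a countable chain of non-diagonal embeddings both match the paper (the paper phrases the reduction as: if no such chain existed, some $\gg_{\beta_0}$ would embed diagonally into all larger $\gg_\alpha$, forcing $\gg$ to be diagonal). But the heart of your converse --- the size-amplification claim --- is where the proof fails, and not merely because it is left unproven: the claim is false as stated. Take $\gg'=\sl(6)\hookrightarrow\gg''=\sl(30)$ defined by $V_{\gg''}|_{\gg'}=\Lambda^2V_6\oplus\Lambda^2V_6^*$ (a non-diagonal embedding of ranks $>4$), and $N=\Lambda^{15}V_{30}$, so $s(N)=15$. Every weight of $N|_{\gg'}$ is a sum of $15$ distinct weights from $\{\pm(e_i+e_j)\}_{1\le i<j\le 6}$, hence of the form $a_i=\sum_{j\neq i}\epsilon_{ij}$ for a symmetric sign matrix $\epsilon$; one checks that $\sum_i|a_i|\le 30$ with equality only when $\epsilon$ is constant, in which case $a\equiv 0$ as an $\sl(6)$-weight. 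So every simple constituent of $N|_{\gg'}$ has tensor size strictly less than $30=2\,s(N)$, and the amplification fails. The proposed mechanism is also incorrect: the highest weight vector of $N\subset V_{\gg''}^{\otimes p}\otimes V_{\gg''}^{*\otimes q}$ is in general a combination of tensors involving \emph{several} weight vectors of $V_{\gg''}$ (e.g.\ $v_1\wedge v_2$ for $\Lambda^2V_{\gg''}$), so its $\gg'$-weight is not $p\lambda+q\lambda^*$, and no vector of that weight need occur in $N|_{\gg'}$ at all.

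What the argument actually requires is that along a non-diagonal chain the tensor sizes occurring in a \emph{fixed} coherent family of constituents of $M$ cannot remain bounded; this is true but is precisely the nontrivial content of Zhilinskii's analysis of coherent local systems, equivalently of Proposition 4.3 of \cite{ps}, which the paper invokes after the chain reduction rather than reproving. So your skeleton is right, but the quantitative core needs to be replaced: either cite \cite{ps}/\cite{Zh2} at that point, or prove a correct (and necessarily more delicate) growth statement tailored to the constituents that actually occur in $M$, rather than a uniform doubling for arbitrary simple $N$.
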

\begin{proof} Again, for a countable dimensional $\gg$ the statement is proven in \cite{ps} (see Proposition 4.3). In fact, we prove in \cite{ps} that if $\gg=\varinjlim \gg_i$ has a non-trivial integrable module such that $M^*$ is also integrable, then the embedding  $\gg_i\hookrightarrow\gg_{i+1}$ is diagonal for all sufficiently large $i$.

To give a general proof, it remains to show that if $\gg$ is not diagonal, then $\Int_{\gg}^\alg$ contains no non-trivial modules. Assume that $\gg=\varinjlim \gg_{\alpha}$ is not diagonal. Fix a simple finite-dimensional Lie algebra $\gg_{\alpha_1}$ and a simple $\gg-$module $M \in \mathrm{Int}_{\gg}^\alg$ such that $M_{\downarrow \gg_{\alpha_1}}$ is non-trivial. We claim that one can find a chain of proper embeddings of simple finite-dimensional Lie algebras $$\gg_{\alpha_1} \hookrightarrow \gg_{\alpha_2} \hookrightarrow \cdots \hookrightarrow \gg_{\alpha_i} \hookrightarrow \gg_{\alpha_{i+1}} \hookrightarrow \cdots$$ such that the embeddings $\gg_{\alpha_i} \hookrightarrow \gg_{\alpha_{i+1}}$ are not diagonal. Indeed, otherwise there will exist $\beta_0$ so that the embedding $\gg_{\beta_0} \hookrightarrow \gg_{\alpha}$ is diagonal for all $\alpha > \beta_0$. Then, since $\gg = \varinjlim_{\alpha > \beta_0}\,\gg_{\alpha}$, $\gg$ is diagonal. This shows that the existence of $\beta_0$ is contradictory. 
Now Proposition 4.3 in \cite{ps} implies that $M_{\downarrow\varinjlim\,\gg_{\alpha_i}}$ is a trivial 
module, which shows that the assumption that $M_{\downarrow\gg_{\alpha_1}}$ is non-trivial is false.
%
\end{proof}

\vspace{0.5cm}

Let $\gg = \sl(V,W)$ (respectively, $\gg = \oo(V), \sp(V)$). Then the tensor products $T^{m,n}:=V^{\otimes m}\otimes W^{\otimes n}$ (respectively, $T^m := V^{\otimes m}$) and their simple subquotients are objects of $\mathrm{Int}_{\gg}^\alg$. 

Here is a less trivial example of a simple object of $\mathrm{Int}_{\gg}^\alg$ for $\gl = \sl(V,V_*)$ where $V$ is a countable-dimensional vector space. Let $\gg =\varinjlim \gg_i$ where $\gg_i = \sl(V_{i}), \mathrm{dim}\,V_i = i+1$, and $\varinjlim V_{i} = V$. Fix $s\in\mathbb Z_{\geq 1}$. Define $\Lambda^{\infty-s}\,V$ as the direct limit $\varinjlim\Lambda^{i+1-s}(V_i)$ (where $\Lambda^{i+1-s}(V_{i})=0$ for $i+1\leq s$). Then $\Lambda^{\infty-s}\,V$ is a simple object of $\mathrm{Int}_{\gg}^\alg$. 

Given a $\gg-$module $M \in \Int_{\gg}^\alg$, where $\gg = \varinjlim\,\gg_{\alpha}$, for each $\alpha$ we can assign to $\gg_{\alpha}$ the finite set of isomorphism classes of simple finite-dimensional $\gg_{\alpha}-$modules which occur in the restriction $M_{\downarrow\gg_{\alpha}}$. A. Zhilinskii has defined a \textit{coherent local system of finite-dimensional representations} of $\gg = \varinjlim\,\gg_{\alpha}$ as a function of $\alpha$ with values in the set of isomorphism classes of finite-dimensional $\gg_{\alpha}-$modules, with the following compatibility condition: if $\beta < \alpha$ then the representations assigned to $\beta$ are obtained by restriction from the representations assigned to $\alpha$. Thus, every $M \in \Int_{\gg}^\alg$ determines a coherent local system of \textit{finite type}, i.e. a local system containing finitely many isomorphism classes for any $\alpha$.

Zhilinskii has classified all coherent local systems under the condition that $\gg$ is countable dimensional \cite{Zh1}, \cite{Zh2} (see also  \cite{PP} for an application of Zhilinskii's result). In particular, he has proved that proper coherent local systems, i.e. coherent local systems different from the ones assigning the trivial $1-$dimensional module to all $\alpha$, or all finite-dimensional $\gg_{\alpha}-$modules to $\alpha$, exist only if $\gg$ is diagonal. This leads to another proof of Proposition \ref{alg2}.

The category  $\mathrm{Int}_{\gg}^\alg$ has enough injectives as follows immediately from Proposition \ref{alg1} a) and Theorem \ref{int1}.
We know of no classification of simple modules in $\Int_{\gg}^\alg$.

\subsection {The category  $\mathrm{Int}^{\wt}_{\gg,\hh}$}\label{sect:3.2}
Given a local Cartan subalgebra $\hh \subset \gg$, we define 
$\mathrm{Int}^{\wt}_{\gg,\hh}$ as the full subcategory of $\mathrm{Int}_{\gg}$ 
consisting of \textit{$\hh-$semisimple} integrable $\gg-$modules, i.e. integrable $\gg-$modules $M$ admitting an $\hh-$weight decomposition
\begin{equation}\label{M_weight_dec}
 M = \oplus_{\lambda \in \hh^*} M^{\lambda}
\end{equation}
where 
\begin{equation}\label{eq.6}
M^{\lambda} := \left\lbrace m \in M\; |\; h \cdot m = \lambda(h)m\;\; \forall h \in \hh\right\rbrace. 
\end{equation}

If $\gg = \sl(V,W), \oo(V), \sp(V)$ for countable-dimensional $V,W$, then $V$ (and $W$ in case $\gg = \sl(V,W)$) is a simple object of $\Int^{\wt}_{\gg,\hh}$ for any $\hh$. Moreover, if $\gg$ is a countable-dimensional locally simple Lie algebra, it is proved in \cite{pstr} that the adjoint representation of $\gg$ is an object of $\Int^{\wt}_{\gg,\hh}$ iff $\gg \simeq \sl(\infty), \oo(\infty), \sp(\infty)$. The simple modules of $\Int^{\wt}_{\gg,\hh}$ for $\gg = \sl(\infty),\oo(\infty),\sp(\infty)$ have been studied in \cite{dip}, however there is no classification of such modules.

Assume that $\gg$ is a locally simple diagonal countable-dimensional Lie algebra. Without loss of generality, assume that $\gg = \varinjlim\,\gg_i$, where all $\gg_i$ are of the same type $A,B,C,$ or $D$. The very definition of $\gg$ implies that there is a well-defined chain

\begin{equation}\label{inf_ch_nat}
 V_{\oj_{1}} \stackrel{\ae_1}{\hookrightarrow} V_{\oj_{2}} \stackrel{\ae_2}{\hookrightarrow} \dots \hookrightarrow V_{\oj_{i}} \stackrel{\ae_i}{\hookrightarrow} V_{\oj_{i+1}} \hookrightarrow \dots
\end{equation}
of embeddings of natural $\oj_i$-modules, and we call its direct limit $V$ a \textit{natural representation} of $\oj$. Moreover, a fixed natural representation $V$ is a simple object of $\Int^{\wt}_{\gg,\hh}$ for some local Cartan subalgebra $\hh$. To see this, we use induction to define a local Cartan subalgebra $\hh \subset \gg$ so that $V \in \Int^{\wt}_{\gg,\hh}$. Given $\hh_i \subset \gg_i$ and an $\hh_i-$eigenbasis $\textbf{b}_{i}$ of $V_{i}$, let $\hh_{i+1}$ be a Cartan subalgebra of $\gg_{i+1}$ whose eigenbasis $\textbf{b}_{i+1}$ of $V_{i+1}$ contains $\textbf{b}_i$. The assumption that $\gg_i$ and $\gg_{i+1}$ are of the same type $A,B,C$ or $D$ implies that $\hh_{i+1}$ exists as required. Moreover, $\hh:=\varinjlim\,\hh_i$ is a well-defined local Cartan subalgebra of $\gg$ and  $V \in \Int^{\wt}_{\gg,\hh}$.

Assume next that $\gg$ is a locally simple Lie algebra which admits a
local Cartan subalgebra $\hh$ such that the adjoint representation
belongs to  $\Int^{\wt}_{\gg,\hh}$. This certainly holds for 
$\gg = \sl(\infty),\oo(\infty),\sp(\infty)$, but also for instance for 
$\gg = \sl(V,V_*)$ where $V$ is an arbitrary vector space. 
In this case we can define a left exact functor 
$\Gamma_\hh^\wt:\mathrm{Int}_{\gg}\rightsquigarrow\mathrm{Int}^{\wt}_{\gg,\hh}$ by setting
\begin{equation}\label{eq.7}
\Gamma_\hh^\wt(M):=\oplus_{\lambda\in\hh^*} M^{\lambda},
\end{equation}
where $ M^{\lambda}$ is given by (\ref{eq.6}). It is easy to see that $\Gamma_\hh^\wt$ is right adjoint to the inclusion functor $\mathrm{Int}^{\wt}_{\gg,\hh}\rightsquigarrow\mathrm{Int}_{\gg}$. Hence $\Gamma_\hh^\wt$ maps injectives to injectives, and therefore $\mathrm{Int}^{\wt}_{\gg,\hh}$ has enough injectives. We do not know whether $\Int^{\wt}_{\gg,\hh}$ has enough injectives when the adjoint representation is not an object of $\mathrm{Int}^{\wt}_{\gg,\hh}$.

We conjecture that for non-diagonal Lie algebras $\gg$ the category $\mathrm{Int}^{\wt}_{\gg,\hh}$ consists of trivial modules only. 

\subsection {The category  $\mathrm{Int}_{\gg,\hh}^\fin$}\label{sect:3.3}

By $\mathrm{Int}_{\gg,\hh}^\fin$ we denote the full subcategory of $\mathrm{Int}^{\wt}_{\gg,\hh}$ consisting of integrable $\gg-$modules satisfying 
$\mathrm{dim}M^{\lambda} < \infty \;\;\forall \lambda \in \hh^*$. 


Note that for $\gg = \sl(V,V_*)$ (respectively, for $\gg = \oo(\infty), \sp(\infty)$) the tensor products $T^{m,0} = V^{\otimes m}$ and $T^{0,n} = W^{\otimes n}$ (respectively, $T^m = V^{\otimes m}$) are objects of $\Int_{\gg,\hh}^\fin$ for every local Cartan subalgebra $\gg$. However, the adjoint representation is not in $\Int_{\gg,\hh}^\fin$ for any $\hh$.

If $\gg$ is countable dimensional diagonal then, as shown above, for each natural representation $V$ there is a local Cartan subalgebra $\hh$ so that $V$ (and more generally $V^{\otimes m}$) is an object of $\Int^{\wt}_{\gg,\hh}$. In fact, $V^{\otimes m} \in \Int_{\gg,\hh}^\fin$ for any $m \geq 0$. 

Here is a more interesting example of a simple module in $\Int_{\gg,\hh}^\fin$ for $\gg = \sl(V,V_*)$, where $V$ is a countable-dimensional vector space. Fix a chain of embeddings $$
\gg_1 \hookrightarrow \gg_2 \hookrightarrow \cdots \hookrightarrow \gg_i \hookrightarrow \gg_{i+1} \hookrightarrow \cdots$$ so that $\gg = \sl(V_{i})$ for $\mathrm{dim}\,V_i = i+1, V = \varinjlim V_i, \gg = \varinjlim \gg_i$. Note that there is a canonical injection of $\gg_i-$modules $S^{i+1}(V_{i}) \hookrightarrow S^{i+2}(V_{i+1})$, and set $\Delta:=\varinjlim\,S^{i+1}(V_i)$. Then one can check that $\Delta$ is a multiplicity free $\hh-$module, where $\hh$ is such that $\hh_i := \hh \cap \gg_i$ is a Cartan subalgebra of $\gg_i$.

The following result is proved in \cite{ps}.

\begin{prop}\label{prop_ps} Let  $\gg=\sl(\infty),\oo(\infty),\sp(\infty)$.
Then the category $\mathrm{Int}_{\gg,\hh}^\fin$ is semisimple.  
\end{prop}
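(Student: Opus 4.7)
My plan is to prove semisimplicity via an infinite-dimensional version of Weyl's unitarian trick: on each $M\in\Int_{\gg,\hh}^\fin$ I construct a positive definite Hermitian form invariant under a compact real form of $\gg$, and then split off any submodule as its orthogonal complement.

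First I fix a compact real form $\gg_{\RR}\subset\gg$ (for instance $\mathfrak{su}(\infty)\subset\sl(\infty)$) together with an exhaustion $\gg=\varinjlim\gg_n$ compatible with $\hh$, so that $\hh_n:=\hh\cap\gg_n$ is a Cartan of $\gg_n$ and $\gg_{n,\RR}:=\gg_{\RR}\cap\gg_n$ is a compact real form of $\gg_n$. For $M\in\Int_{\gg,\hh}^\fin$, integrability and Weyl's theorem imply that $M$ restricts to a completely reducible $\gg_n$-module for every $n$, and the finite $\hh$-weight multiplicity guarantees that each weight space $M^\lambda$ is acted on by only finitely many $\gg_n$-isotypic components.

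Next I would construct inductively in $n$ a compatible sequence of positive definite Hermitian forms $\langle\cdot,\cdot\rangle_n$ on $M$ making $\gg_{n,\RR}$ act by skew-Hermitian operators, with $\langle\cdot,\cdot\rangle_n$ refining $\langle\cdot,\cdot\rangle_{n-1}$ on every $\gg_{n-1}$-isotypic component. At each step the space of $\gg_{n,\RR}$-invariant Hermitian forms on a given $\gg_n$-isotypic component is parametrized (via Schur) by a positive definite form on the multiplicity space, and the compatibility with $\langle\cdot,\cdot\rangle_{n-1}$ reduces to a finite-dimensional linear algebra problem on each weight space, which is where the finite multiplicity hypothesis is crucial. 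Taking the limit yields a $\gg_{\RR}$-invariant positive definite Hermitian form $\langle\cdot,\cdot\rangle$ on $M$.

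Finally, for any $\gg$-submodule $N\subset M$ the orthogonal complement $N^\perp$ is $\gg_{\RR}$-stable (as $\gg_{\RR}$ acts by skew-Hermitian operators), hence $\gg$-stable since $\gg=\gg_{\RR}\otimes_{\RR}\CC$; thus $M=N\oplus N^\perp$, giving semisimplicity. The main obstacle is the inductive construction of the compatible family of Hermitian forms---verifying that the positive scalars parametrizing Schur's forms on the $\gg_n$-isotypics can be chosen consistently across $n$, and that the limiting form is well-defined and positive definite on all of $M$. Finite $\hh$-weight multiplicity is what reduces this to a tractable weight-by-weight problem, so the argument breaks down precisely for modules (such as the adjoint representation) that lie outside $\Int_{\gg,\hh}^\fin$.
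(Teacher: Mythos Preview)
The paper does not actually prove this proposition here; it simply cites \cite{ps}, so there is no in-paper argument to compare your approach against directly.

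Your unitarian-trick strategy is natural, but the step you yourself flag as ``the main obstacle'' is a genuine gap, not a routine verification. The difficulty is that a $\gg_{n+1,\RR}$-invariant positive definite form on $M$ is automatically $\gg_{n,\RR}$-invariant, yet the relative scalings it induces on the various $\gg_n$-isotypic components are \emph{forced} by the $\gg_{n+1}$-module structure and need not match an arbitrary $\gg_{n,\RR}$-invariant form chosen at the previous stage. So ``refining $\langle\cdot,\cdot\rangle_{n-1}$'' really means going back and correcting earlier choices, and you must argue that this correction process stabilises on each weight space. Finite weight multiplicity gives $\dim M^\lambda<\infty$, but it does not by itself bound how many times the form on a fixed $M^\lambda$ gets rescaled as $n\to\infty$; what you actually need is that the $\gg_n$-isotypic decomposition seen by any fixed finite set of weight vectors eventually stabilises---and that is essentially the semisimplicity statement you are trying to prove. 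Without an independent mechanism (for instance an extremal-weight argument that pins the form down uniquely on each simple constituent, up to a global scalar), the construction risks being circular.

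A more robust route---and the one taken in \cite{ps}---avoids building a global invariant form on a general $M$ altogether and instead shows directly that extensions between simple objects in $\Int_{\gg,\hh}^\fin$ split, using the finite weight multiplicity to control the possible $\Ext^1$ classes weight by weight. If you want to salvage the unitarian approach, the cleanest fix is to construct the contravariant form only on \emph{simple} objects (where a highest/extremal weight normalisation removes the ambiguity), and then reduce the general case to a vanishing-of-$\Ext^1$ statement between simples.
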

This result should be considered an extension of Weyl's semisimplicity theorem to the case of direct limit Lie algebras. It is an interesting question whether the category $\mathrm{Int}_{\gg,\hh}^\fin$ is semisimple whenever it is well defined. 

\subsection{The category $\widetilde{\mathrm{Tens}}_{\gg}$}\label{sect:3.4}
Let $M$ be a $\gg-$module. 
Recall that the \textit{socle} $\Soc\,M = \Soc^1\,M$ of $M$ is the unique maximal semisimple submodule of 
$M$, and $\mathrm{soc}^kM:=\pi^{-1}(\mathrm{soc}(M/\mathrm{soc}^{k-1}M))$ for $k \geq 2$, where $\pi : M \rightarrow M/\mathrm{soc}^k M$ is the natural projection. The filtration 
\begin{equation}\label{eq.filt}
 0 \subset \Soc\,M = \Soc^1\,M \subset \Soc^2\,M\subset \dots\subset\Soc^k\,M\subset\dots
\end{equation}
is by definition the \textit{socle filtration} of $M$.
The $\gg-$module $M$ has \textit{finite Loewy length} if it has a finite and exhaustive socle filtration, i.e.
\begin{equation}\label{soc_filt}
 M = \mathrm{soc}^l M
\end{equation}
for some $l$. 

By definition, $\widetilde{\mathrm{Tens}}_{\gg}$ is the full subcategory of $\mathrm{Int}_{\gg}$ whose objects are integrable $\gg-$modules with the property that both $M$ and $\Gamma_{\gg}(M^*)$ have finite Loewy length.

The category $\widetilde{\mathrm{Tens}}_{\gg}$ is studied in detail in
\cite{ps} for 
$\gg = \sl(\infty), \oo(\infty), \sp(\infty)$, where it is shown in
particular that 
$\Gamma_{\gg}(M^*) = M^*$ for any object $M$ of
$\widetilde{\mathrm{Tens}}_{\gg}$. A major result of \cite{ps} is
that, up to isomorphism, the simple objects of
$\widetilde{\mathrm{Tens}}_{\gg}$ are precisely the simple
subquotients of the tensor algebra 
$T(V\oplus V_*)$ for $\gg = \sl(V,V_*) \simeq \sl(\infty)$, and of the
tensor algebra 
$T(V)$ for $\gg = \oo(V) \simeq \oo(\infty)$ or $\gg = \sp(V) \simeq\sp(\infty)$. 
These simple modules are discussed in more detail in section
\ref{sect:4} below. Note that the objects of 
$\widetilde{\mathrm{Tens}}_{\gg}$ have in general infinite length and are not objects of $\mathrm{Int}^{\wt}_{\gg,\hh}$ for any $\hh$. 
An example of infinite length module in
$\widetilde{\mathrm{Tens}}_{\gg}$ for $\gg = \sl(V,V_*) \simeq\sl(\infty)$ is 
$V^*$: there is a non-splitting exact sequence of $\gg-$modules 
$$ 0 \to V_* = \Soc\,V^* \rightarrow V^* \rightarrow V^*/V_*\rightarrow 0 $$ and $V^*/V_*$ is a 
trivial module of uncountable dimension.

For $\gg = \sl(\infty), \oo(\infty), \sp(\infty)$, the category
$\widetilde{\mathrm{Tens}}_\gg$ 
has enough injectives [PS, Corollary 6.7a)].

\subsection{The category $\TT_{\gg}$}\label{sect:3.5}
The fifth subcategory we would like to introduce in this section is
the category of tensor modules 
$\TT_{\gg}$. We define this category only for $\gg=\sl(V,W),\oo(V),\sp(V)$, and discuss it in detail in section \ref{sect:5}. 

We call a subalgebra $\kk\subset \sl(W,V)$  a \textit{finite-corank
 subalgebra} if it contains the subalgebra 
$\sl(W_0^\perp,V_0^\perp)$ for some finite-dimensional non-degenerate pair
$V_0\subset V,W_0\subset W$. Similarly, we call $\kk\subset \oo(V)$
(respectively, 
$\sp(V)$) a \textit{finite corank subalgebra}  if it contains $ \oo(V^\perp)$ (respectively, $\sp(V_0^\perp)$) for some finite-dimensional $V_0\subset V$ such that the restriction of the form on $V_0$ is non-degenerate.

We say that a $\gg$-module $L$ {\it satisfies the large annihilator
  condition} if the annihilator in $\gg$ of any 
$l\in L$ contains a finite-corank subalgera. It follows immediately
from definition that if $L_1$ and $L_2$ 
satisfy the large annihilator condition, then the same holds also for $L_1\oplus L_2$ and $L_1\otimes L_2$.

By $\TT_{\gg}$ we denote the category of finite length integrable $\gg-$modules which satisfy the large annihilator condition. By definition, $\TT_{\gg}$ is a full subcategory of $\mathrm{Int}_{\gg}$. It is clear that $\TT_\gg$ is a monoidal category with respect to usual tensor product $\otimes$.

\subsection{Inclusion pattern}\label{sect:3.6} 
The following diagram summarizes the inclusion pattern for the five subcategories of $\mathrm{Int}_{\gg}$ introduced above:


\begin{figure}[H]
 \centering
\includegraphics[scale=.525]{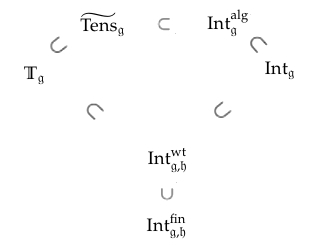}.
\begin{equation}\label{diagram}
\end{equation}
\end{figure} Note that all categories except $\TT_\gg$ are defined for any locally simple Lie algebra $\gg$, while $\TT_\gg$ is defined only for $\gg = \sl(V,W), \oo(V), \sp(V)$. Moreover, under the latter assumption all inclusions are strict. We support this claim by a list of examples and leave it to the reader to complete the proof. 


\paragraph{\textbf{Examples}}

  Let $\gg = \sl(V,V_*), \oo(V), \sp(V)$, where $V$ is countable dimensional. The simple objects of $\TT_\gg$ and $\widetilde{\mathrm{Tens}}_\gg$ are the same, however $V^* \in \widetilde{\mathrm{Tens}}_\gg$ while $V^* \notin \TT_\gg$. Moreover, $V^* \notin \mathrm{Int}^{\wt}_{\gg,\hh}$ for any local Cartan subalgebra $\hh$. The module $\Delta$ from subsection \ref{sect:3.3} is an object of $\mathrm{Int}_{\gg,\hh}^\fin$ but not an object of $\mathrm{Int}_\gg^\alg$.  The adjoint representation is an object of $\mathrm{Int}^{\wt}_{\gg,\hh}$ but not of $\Int_{\gg,\hh}^\fin$.

\section{Mixed tensors}\label{sect:4}
In this section $\gg = \sl(V,W), \oo(V), \sp(V)$. By definition, $V$ is a $\gg-$module. For $\gg = \sl(V,W), W$ is also a $\gg-$module.

Consider the tensor algebra $T(V)$ of $V$. Then usual finite-dimensional Schur duality implies that
\begin{equation}\label{schur_imp}
 T(V)=\bigoplus_{\lambda}\CC_{\lambda} \otimes V_{\lambda}, 
\end{equation}
where $\lambda$ runs over all Young diagrams (i.e. over all partitions of all integers $m \in \ZZ_{\geq 0}$), $\CC_{\lambda}$ denotes the irreducible $S_{|\lambda|}-$module (where $|\lambda|$ is the degree of $\lambda$) corresponding to $\lambda$, and $V_{\lambda}$ is the image of the Schur projector corresponding to $\lambda$. For $\gg = \sl(V,W), V_{\lambda}$ is an irreducible $\gg-$module as it is isomorphic to the direct limit $\varinjlim(V_f)_{\lambda}$ of the directed system $\{(V_f)_\lambda\}$ of irreducible $\sl(V_f,W_f)-$modules for sufficiently large non-degenerate finite-dimensional pairs $V_f \subset V, W_f \subset W$. For $\gg=\oo(V), \sp(V), V_{\lambda}$ is in general a reducible $\gg-$module.

Similarly, for $\gg = \sl(V,W),$
\begin{equation}\label{schur_imp1}
 T(W)=\bigoplus_{\lambda}\CC_{\lambda} \otimes W_{\lambda}.
\end{equation}

Let $\gg = \sl(V,W)$. Recall that $T^{m,n}=V^{\otimes m}\otimes W^{\otimes n}$.
Then
\begin{equation}
 T^{m,n} = \bigoplus_{|\lambda|=n, \; |\mu| = m} \CC_{\lambda} \otimes \CC_{\mu} \otimes V_{\lambda} \otimes W_{\mu}.
\end{equation}

Note that, as a $\gg-$module $T(V,W):=\bigoplus_{m,n\geq 0} T^{m,n}$ is not completely reducible. This follows simply from the observation that the exact sequence
\begin{equation}\label{seqv}
 0 \rightarrow \gg \rightarrow V \otimes W \rightarrow \CC \rightarrow 0
\end{equation}
does not split as $V \otimes W$ has no trivial submodule. In \cite{pstyr} the structure of $T(V,W)$ has been studied in detail for 
countable-dimensional $V$ and $W$.

For each ordered set 
$I=\left\lbrace i_1, \dots, i_k, j_1, \dots, j_k\right\rbrace$, where $i_1, \dots, i_k \in \left\lbrace1,\dots,m\right\rbrace, j_1,\dots,j_k \in \left\lbrace1,\dots,n\right\rbrace, k \leq \mathrm{min}\left\lbrace m,n \right\rbrace,$ 
there is a well-defined morphism of $\gg-$modules
\begin{equation}\label{comp_contr}
 \varphi_I\; : \; T^{m,n} \longrightarrow T^{m-k,n-k}
\end{equation}
which is simply the composition of contractions
\begin{eqnarray}\label{contr}
 T^{m,n} \xrightarrow{\varphi_{i_1,j_1}} T^{m-1,n-1} \xrightarrow{\varphi_{i_2,j_2}} T^{m-2,n-2} \xrightarrow{\varphi_{i_3,j_3}} &\dots& \xrightarrow{\varphi_{i_k,j_k}} T^{m-k,n-k}.
\end{eqnarray}
 Here
\begin{eqnarray}
\varphi_{i,j} \; : \; T^{m,n} \longrightarrow T^{m-1,n-1}.
\end{eqnarray}
denotes the contraction $V \otimes W \rightarrow \CC$ where $V$ is identified with the $i-$th factor in $V^{\otimes m}$ and $W$ is identified with the 
$j-$th factor in $W^{\otimes n}$.

We now define a filtration of $T^{m,n}$ by setting
\begin{equation}\label{gen_soc}
F^{m,n}_0:=0,\,\,F^{m,n}_k:= \cap_I\, \mathrm{ker}\, \varphi_I\,\text{for}\,k=1,\dots,\min\lbrace m,n\rbrace,\,F^{m,n}_{\min \lbrace m,n\rbrace + 1}:=T^{m,n}, 
\end{equation}
where $I$ runs over all ordered sets 
$\left\lbrace i_1,\dots,i_{k},j_1,\dots,j_{k}\right\rbrace$ as above.

Let $|\lambda|=m,|\mu|=n$. We set
$$V_{\lambda,\mu}:=F^{m,n}_1\cap  (V_{\lambda} \otimes W_{\mu}).$$ 
Note that, for sufficiently large finite-dimensional non-degenerate pairs $V_f\subset V, W_f\subset W$, the $\sl(V_f,W_f)$-module $T(V_f,W_f)\cap V_{\lambda,\mu}$ is simple.
Therefore $V_{\lambda,\mu}$ is a simple $\sl(V,W)$-module.

\begin{theo}\label{soc-sl} $\lbrace F^{m,n}_k\rbrace_{0\leq k\leq \min\lbrace m,n\rbrace+1}$ is the socle filtration of $T^{m,n}$ as a $\sl(V,W)-$module.
\end{theo}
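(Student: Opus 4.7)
The proof naturally splits into two parts: showing that $F^{m,n}_\bullet$ is a filtration with semisimple layers, and identifying these layers with those of the socle filtration. Both parts rely on the same observation. By construction of $F^{m,n}_k$, the collection of $k$-fold contractions $\varphi_I$ with $|I|=k$ assembles into a $\gg$-equivariant injection
\[
\Phi_k \colon T^{m,n}/F^{m,n}_k \;\hookrightarrow\; \bigoplus_{|I|=k} T^{m-k,n-k}.
\]

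For the first part, the restriction of $\Phi_k$ to $F^{m,n}_{k+1}/F^{m,n}_k$ factors through $\bigoplus_{|I|=k} F^{m-k,n-k}_1$, since any further contraction $\varphi_{i',j'}\circ\varphi_I$ is a $(k+1)$-fold contraction and hence kills $F^{m,n}_{k+1}$ by definition. The target is semisimple by the decomposition $F^{m',n'}_1 = \bigoplus_{|\lambda|=m',\,|\mu|=n'}\CC_\lambda\otimes\CC_\mu\otimes V_{\lambda,\mu}$ together with the simplicity of each $V_{\lambda,\mu}$, both asserted in the paragraph preceding the theorem. Hence every layer $F^{m,n}_{k+1}/F^{m,n}_k$ is semisimple.

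For the second part, the same injection $\Phi_k$ reduces matters to the base case: a simple submodule of $T^{m,n}/F^{m,n}_k$ embeds via $\Phi_k$ into some $T^{m-k,n-k}$, and granting the base case for $T^{m-k,n-k}$, this simple submodule lies in $F^{m-k,n-k}_1$ and is therefore killed by one more contraction, placing it inside $F^{m,n}_{k+1}/F^{m,n}_k$. The base case to be proved is thus: every simple $\gg$-submodule $L\subset T^{m,n}$ lies in $F^{m,n}_1$.

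The base case is the crux and the main obstacle. A naive induction on $m+n$ using the injection $L\hookrightarrow T^{m-1,n-1}$ produced by some non-zero contraction $\varphi_{i,j}$ only yields $L\subset F^{m,n}_2$, because the inductive hypothesis gives $\varphi_{i,j}(L)\subset F^{m-1,n-1}_1$ but says nothing directly about $L$ inside $T^{m,n}$. The genuine content needed is the non-embedding assertion that $V_{\lambda,\mu}$ does not embed $\gg$-equivariantly into $T^{m,n}$ unless $(|\lambda|,|\mu|)=(m,n)$. I would establish this by reducing to the countable-dimensional case already treated in \cite{pstyr}: using Proposition \ref{prop:1.4.10} and Corollary \ref{standardex}, choose a countable-dimensional non-degenerate pair $V'\subset V,\ W'\subset W$ admitting mutually dual bases, such that a chosen non-zero $x\in L$ and the supports of all $\varphi_{i,j}(x)$ lie in $T^{m,n}(V',W')$. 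Then $\gg':=\sl(V',W')\simeq \sl(\infty)$ acts on $T^{m,n}(V',W')$, whose $\gg'$-socle filtration is $F^{m,n}_\bullet(V',W')$ by \cite{pstyr}, and since the contractions $\varphi_{i,j}$ are intrinsic tensorial operations, the tracelessness of the $\gg'$-socle of the finitely generated $\gg'$-submodule $U(\gg')x\subset T^{m,n}(V',W')$ must be propagated back to $x$ itself. This propagation is the technical heart of the argument: it exploits the $\gg$-simplicity of $L$ together with the fact that the non-embedding $V_{\lambda,\mu}\not\hookrightarrow T^{m,n}$ is already visible over any countable-dimensional non-degenerate subsystem, so the obstruction cannot be hidden in the uncountable part of $V,W$.
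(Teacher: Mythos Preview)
Your inductive scheme via the injections $\Phi_k$ is exactly the mechanism the paper uses (compare the exact sequence $0\to F^{m,n}_{k-1}\to F^{m,n}_k\to\bigoplus_I F^{m-k+1,n-k+1}_1$ in the proof), so the reduction to the base case ``every simple $\gg$-submodule $L\subset T^{m,n}$ lies in $F^{m,n}_1$'' is correctly isolated.

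The divergence is in how the base case is handled. The paper does \emph{not} reduce to the countable case; instead it proves from scratch a vanishing-of-invariants lemma, $(T^{p,q})^{\gg}=0$ for $p+q>0$ (Lemma~\ref{inv}), and then uses finite-corank subalgebras $\kk=\sl(W_f^\perp,V_f^\perp)$ to show that any $\gg$-map $V_{\lambda,\mu}\to T^{m,n}$, restricted to the $\kk$-invariants, is governed by finite-dimensional representation theory and must factor through $T^{|\lambda|,|\mu|}\otimes (T^{m-|\lambda|,n-|\mu|})^{\gg}$ (Lemma~\ref{soclemma1}). The invariants lemma then forces $|\lambda|=m$, $|\mu|=n$. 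This argument is self-contained and in particular re-proves the countable case rather than quoting it.

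Your reduction-to-countable strategy can be made to work, but as written the ``propagation'' step is a gap. You assert that tracelessness of the $\gg'$-socle of $U(\gg')x$ ``must be propagated back to $x$ itself''; however $x$ need not lie in that $\gg'$-socle, and there is no mechanism by which tracelessness of a proper submodule forces tracelessness of a specific vector outside it. What \emph{does} work is simpler and should be stated plainly: since $T^{m,n}(V',W')$ has finite length over $\gg'\simeq\sl(\infty)$, the $\gg'$-socle of $U(\gg')x$ is non-zero; by the countable result it lies in $F^{m,n}_1(V',W')\subset F^{m,n}_1$; and since $U(\gg')x\subset L$, we get $L\cap F^{m,n}_1\neq 0$. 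Now $\gg$-simplicity of $L$ and the fact that $F^{m,n}_1$ is a $\gg$-submodule give $L\subset F^{m,n}_1$. Note that this does not show $x$ itself is traceless until the very last step; the phrase ``propagated back to $x$'' misdescribes the logic. (The reference to Proposition~\ref{prop:1.4.10} is also slightly off; what you need is Mackey's result that any countable-dimensional linear system admits dual bases, which ensures $\gg'\simeq\sl(\infty)$.)
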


\begin{proof}
In \cite{pstyr} this theorem is proven in the countable-dimensional case. Here we give a proof for arbitrary $V$ and $W$.

Recall that if $M$ is a $\gg-$module, $M^\gg$ stands for the space of $\gg-$invariants in $M$.

\begin{lemma}\label{inv} Let $\gg=\sl(V,W)$ (respectively, $\oo(V)$ or $\sp(V)$). Then $(T^{m,n})^\gg=0$ for $m+n>0$ 
(respectively, $(T^m)^\gg=0$ for $m>0$).
\end{lemma}
\begin{proof} We prove the statement for $\gg=\sl(V,W)$ and $m>0$. The other cases are similar. Let $u\in T^{m,n} = V^{\otimes m}\otimes W^{\otimes n}$, $u\neq 0$. Then  $u\in V_f^{\otimes m}\otimes W_f^{\otimes n}$ for some finite-dimensional non-degenerate pair $V_f\subset V, W_f\subset W$. Choose bases in $V_f$ and $W_f$ and write $$u=\sum_{i=1}^tc_i v^i_1\otimes\dots\otimes v^i_m\otimes w^i_1\otimes\dots\otimes w^i_n$$ where all $v^i_j$ and $w^i_j$ are basis vectors respectively of $V_f$ and $W_f$. Pick $w\in W$ such that $\tr(v^1_1 \otimes w)=1$ and $\tr(v^i_j \otimes w)=0$ for all $v^i_j\neq v^1_1$. Let $v\in V \backslash V_f$ and $w\in W_f^\perp$. Then $$(v \otimes w)\cdot u =\sum_{i=1}^t\sum_{j=1}^mc_i \tr(v^i_j\otimes w) v^i_1\otimes\dots\otimes v^i_{j-1}\otimes v\otimes v^i_{j+1}\otimes\dots\otimes v^i_m\otimes w^i_1\otimes\dots\otimes w^i_n.$$ Our choice of $v$ and $w$ ensures that at least one term in the right-hand side is not zero and there is no repetition in the tensor monomials appearing 
with non-zero 
coefficient. That implies $(v \otimes w)\cdot u \neq 0$. Hence $u\notin  (V^{\otimes m}\otimes W^{\otimes n})^\gg$.
\end{proof}

\begin{lemma}\label{soclemma1} Let $\gg = \sl(V,W)$. If $\Hom_\gg(V_{\lambda,\mu}, T^{m,n})\neq 0$ then $|\lambda|=m,|\mu|=n$.
\end{lemma}

\begin{proof} Choose a finite-dimensional non-degenerate pair $V_f\subset V, W_f\subset W$ such that $\dim V_f\geq \operatorname{max}\lbrace m,n,|\lambda|,|\mu|\rbrace$. Then $(V_f)_{\lambda,\mu}:=T(V_f,W_f)\cap V_{\lambda,\mu}$ is annihilated by the finite corank subalgebra $\kk=\sl(W_f^\perp,V_f^\perp)$ of $\gg$. Let $\mathfrak l=\sl(V_f,W_f)\oplus \kk$. Then $$\Hom_{\mathfrak l}((V_f)_{\lambda,\mu},T^{m,n})=\Hom_{\sl(V_f,W_f)}((V_f)_{\lambda,\mu},(T^{m,n})^\kk)=\Hom_{\sl(V_f,W_f)}((V_f)_{\lambda,\mu},V_f^{\otimes m}\otimes W_f^{\otimes n}).$$ Therefore a homomorphism $\varphi\in\Hom_\gg(V_{\lambda,\mu}, T^{m,n})$ has a well-defined restriction $\varphi_f\in \Hom_{\sl(V_f,W_f)}((V_f)_{\lambda,\mu},V_f^{\otimes m}\otimes W_f^{\otimes n})$. According to finite-dimensional representation theory, $\phi_f\neq 0$ implies that $\varphi_f$ is a composition 
$$(V_f)_{\lambda,\mu}\to (V_f^{\otimes|\lambda|} \otimes W_f^{\otimes|\mu|})\otimes(V_f^{\otimes(m-|\lambda|)}\otimes W_f^{\otimes(n-|\mu|)})^{\sl(V_f,W_f)}\xrightarrow{} V_f^{\otimes m}\otimes W_f^{\otimes n}.$$ 
Since $\varphi$ is the inverse limit of $\varphi_f$, $\varphi$ is a composition 
$$V_{\lambda,\mu} \to T^{|\lambda|,|\mu|}\otimes  (T^{m-|\lambda|,n-|\mu|})^{\sl(V,W)}\xrightarrow{} T^{m,n}.$$ 
However, by Lemma \ref{inv}, $(T^{m-|\lambda|,n-|\mu|})^{\sl(V,W)}\neq 0$ only if  $|\lambda|=m,|\mu|=n$.
\end{proof}

Note that Lemma \ref{soclemma1} implies 
\begin{equation}\label{socle:filt1}
 \mathrm{soc}T^{m,n}=\mathrm{soc}^1T^{m,n}=F_1^{m,n}.
\end{equation}
Consider now the exact sequence
\begin{equation}\label{socle:filt2}
 0\to F^{m,n}_{k-1}\to T^{m,n}\to \bigoplus_I T^{m-k+1,n-k+1},
\end{equation}
where $I$ runs over the same set as in (\ref{gen_soc}).
It follows from (\ref{socle:filt1}) that (\ref{socle:filt2}) induces an exact sequence
$$0\to F^{m,n}_{k-1}\to F^{m,n}_{k}\to \bigoplus_I F^{m-k+1,n-k+1}_1.$$
Therefore induction on $k$ yields $\operatorname{soc}^{k}T^{m,n}=F^{m,n}_{k}$.
Theorem \ref{soc-sl} is proved.
\end{proof}

As a corollary we obtain that the $\sl(V,W)-$module $V_{\lambda}
\otimes W_{\mu}$ is indecomposable since its socle 
$V_{\lambda,\mu}$ is simple. Further one shows that any simple
subquotient of $T(V,W)$ is isomorphic to 
$V_{\lambda,\mu}$ for an appropriate pair of partitions $\lambda,
\mu.$ In the socle filtration of $V_{\lambda} \otimes W_\mu$, the
$k-$th level, i.e. the quotient $\mathrm{soc}^k(V_{\lambda} \otimes W_\mu)/\mathrm{soc}^{k-1}(V_{\lambda}\otimes W_\mu)$, can 
have only simple constituents isomorphic to $V_{\lambda',\mu'}$ where $\lambda'$ is obtained from $\lambda$ by removing $k-1$ boxes and $\mu'$ is obtained from $\mu$ by removing $k-1$ boxes. An explicit formula for the multiplicity of $V_{\lambda' \mu'}$ in  $\mathrm{soc}^k(V_{\lambda} \otimes W_{\mu})/\mathrm{soc}^{k-1}(V_{\lambda}\otimes W_\mu)$ is given in \cite{pstyr}.

Next, consider the algebra $\mathcal
A_{\sl(V,W)}\subset \End_{\sl(V,W)}(T(V,W))$ generated by all
contractions $\phi_{i,j}$ and 
by the direct sum of group algebras $\bigoplus_{m,n\geq  0}\CC[S_m\times S_n]$. It is clear that $\mathcal A_{\sl(V,W)}$ does
not depend on the choice of the linear system $V \times W \rightarrow\CC$. 
In what follows we use the notation $\mathcal A_{\sl}$. One can equip
$\mathcal A_{\sl}$ with a 
$\ZZ_{\geq 0}$-grading  $\mathcal A_{\sl}=\bigoplus_{q\geq 0}
(\mathcal A_{\sl})_q$ by setting 
$(\mathcal A_{\sl})_q:=\bigoplus_{m,n\geq
  0}{\mathrm{Hom}_{\sl(V,W)}}(T^{m,n},T^{m-q,n-q})\cap A_{\sl}$. If we
set 
$T^{\leq r}(V,W):=\bigoplus_{m+n\leq r} T^{m,n}$ and denote by
$\mathcal A_{\sl}^{(r)}$ the intersection of  
$\mathcal A_{\sl}$ with $\End_{\sl(V,W)}(T^{\leq r}(V,W))$, then,
obviously,   
$\mathcal A_{\sl}=\varinjlim \mathcal A_{\sl}^{(r)}$.

The following statement is a central result in \cite{DPS}.

\begin{prop}\label{Koszul1} 
\begin{enumerate}[a)]
 \item If $V$ is countable dimensional, then 
$(\mathcal A_{\sl})_q = \bigoplus_{m,n \geq 0}\,\Hom_{\sl(V,V_*)}\,(T^{m,n},T^{m-q,n-q})$.
 \item $\mathcal A_{\sl}^{(r)}$ is a Koszul self-dual ring for any $r \geq 0$.
\end{enumerate}
\end{prop}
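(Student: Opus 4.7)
The plan is to treat parts a) and b) separately, following the strategy of \cite{DPS}.

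For part a), the inclusion $(\mathcal A_{\sl})_q \subseteq \bigoplus_{m,n}\Hom_{\sl(V,V_*)}(T^{m,n},T^{m-q,n-q})$ is automatic from the definition, so the work is in the reverse inclusion. First I would reduce to computing, for each fixed pair $(m,n)$, the dimension of $\Hom_{\sl(V,V_*)}(T^{m,n},T^{m-q,n-q})$. By Schur--Weyl duality combined with Theorem \ref{soc-sl}, every simple subquotient of $T^{m,n}$ is some $V_{\lambda,\mu}$, appearing in the $k$-th socle layer exactly when $\lambda$ has $|\lambda| = m-k+1$ and $\mu$ has $|\mu| = n-k+1$, with multiplicities given by the explicit branching formula stated right after Theorem \ref{soc-sl}. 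By Lemma \ref{soclemma1}, a morphism from $V_{\lambda,\mu}$ into $T^{m',n'}$ must have $m' = |\lambda|$ and $n' = |\mu|$, which together with the socle analysis determines precisely into which socle layer of $T^{m-q,n-q}$ the constituents of $T^{m,n}$ can map. Summing the resulting dimensions yields an upper bound. To see this bound is attained, one checks that the compositions of $k$ elementary contractions, sandwiched between permutations in $S_m\times S_n$ and $S_{m-q}\times S_{n-q}$ with $k=q$, realize that many linearly independent morphisms; here the independence follows by restriction to sufficiently large finite-dimensional non-degenerate pairs $V_f, W_f$, where the classical fundamental theorem of invariant theory for $\sl(V_f,W_f)$ applies.

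For part b), I would start by giving $\mathcal A_{\sl}^{(r)}$ an explicit presentation. The degree-zero part is the semisimple algebra $\bigoplus_{m+n\leq r} \CC[S_m\times S_n]$, and degree one is generated by the elementary contractions $\varphi_{i,j}: T^{m,n}\to T^{m-1,n-1}$ together with compatibility isomorphisms induced by identifying $T^{m,n}$ with $T^{m,n}$ after relabeling. The quadratic relations are of two types: the commutation relations $\varphi_{i,j}\varphi_{k,l}=\varphi_{k',l'}\varphi_{i',j'}$ recording how two disjoint contractions commute, and the intertwining relations expressing that post-composition by a permutation equals pre-composition by the corresponding permutation on the remaining indices. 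I would then verify quadraticity by checking that these relations generate the entire relation ideal, which reduces via part a) to a finite combinatorial verification in each bidegree.

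The final and hardest step is to establish Koszulity and self-duality simultaneously. The approach is to compute the quadratic dual $(\mathcal A_{\sl}^{(r)})^{!}$ from the presentation and exhibit an explicit anti-isomorphism $\mathcal A_{\sl}^{(r)}\simeq (\mathcal A_{\sl}^{(r)})^{!}$; at the level of generators and relations this anti-isomorphism is built from the $V\leftrightarrow V_*$ symmetry intrinsic to the linear system, which swaps the roles of upper and lower indices in $T^{m,n}$ and converts each contraction into the dual insertion. To upgrade this to genuine Koszulity, I would construct a linear minimal projective resolution of each simple $\mathcal A_{\sl}^{(r)}$-module parametrized by $(\lambda,\mu)$, using the socle filtrations of Theorem \ref{soc-sl} as the template: the $k$-th term of the resolution is built from the level-$k$ subquotients of $V_\lambda\otimes W_\mu$, and the differentials are assembled from contractions.

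The main obstacle is this last step: verifying Koszulity rigorously, i.e.\ proving that the candidate resolution is exact and linear. The combinatorial self-duality at the level of generators and relations is essentially formal, but translating it into an Ext-algebra statement requires the detailed branching multiplicities for the socle layers and a careful bookkeeping of signs and Schur idempotents; this is where the bulk of the technical work in \cite{DPS} lies, and I would expect to lean on those computations rather than redo them from scratch.
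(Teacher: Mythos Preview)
The paper does not actually prove this proposition: it is stated as ``a central result in \cite{DPS}'' and simply cited. So there is no proof in the paper to compare your proposal against; your sketch is really an outline of how one might reconstruct the argument of \cite{DPS}.

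That said, a few comments on the sketch itself. For part a), your dimension-counting approach via Theorem~\ref{soc-sl} and Lemma~\ref{soclemma1} is workable but roundabout. A more direct route, closer to what is used elsewhere in the paper (Lemma~\ref{inv1}), is to reduce the computation of $\Hom_{\gg}(T^{m,n},T^{m-q,n-q})$ to invariants in a single mixed tensor space and invoke the fundamental theorem of invariant theory; this immediately shows that every such morphism is a linear combination of contractions composed with permutations, without any socle-layer bookkeeping.

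For part b) there is a genuine gap in your heuristic. The $V\leftrightarrow V_*$ symmetry of the linear system induces an (anti-)automorphism of $\mathcal A_{\sl}$ itself, interchanging the idempotents $e_{(m,n)}$ and $e_{(n,m)}$; it does \emph{not} identify $\mathcal A_{\sl}^{(r)}$ with its quadratic dual $(\mathcal A_{\sl}^{(r)})^{!}$. In the Koszul dual the generators go the other way (from $e_{(m-1,n-1)}$ to $e_{(m,n)}$), and there is no canonical ``insertion'' map $T^{m-1,n-1}\to T^{m,n}$ in the infinite-dimensional setting to realize them concretely. The self-duality in \cite{DPS} is established by different means, essentially by matching the Ext-algebra of simples with $\mathcal A_{\sl}^{(r)}$ via the explicit multiplicity formulas for the socle layers; your final paragraph gestures at this, and that is indeed where the real work lies, but the proposed shortcut through the $V\leftrightarrow V_*$ swap will not produce the required isomorphism.
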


Now let $\gg=\oo(V)$ (respectively, $\sp(V)$). Recall that $T^m =
V^{\otimes m}$. Assume $m\geq 2$. For a pair of indices 
$1\leq i<j\leq m$ we have a contraction map
$\varphi_{i,j}\in\Hom_\gg(V^{\otimes m},V^{\otimes m-2})$. 
If $V$ is countable dimensional, the socle filtration of $T(V)$
considered as a $\gg-$module is described in
\cite{pstyr}. Recall the decomposition (\ref{schur_imp}). Each
$V_\lambda$ is an indecomposable $\gg$-module with simple socle which 
we denote by $V_{\lambda,\gg}$. Moreover, 
$$\Soc^k\,V_{\lambda} = \Soc^k\,(V_{\lambda} \cap
V^{\otimes|\lambda|}) = V_{\lambda} \cap
(\cap_{I_1,\dots,I_k}\Ker\,(\phi_{I_1,\dots,I_k}:\,V^{\otimes|\lambda|}\rightarrow
V^{\otimes|\lambda|-2k})),$$ where $I_1,\dots,I_k$ run over all sets
of $k$ distinct pairs of indices $1,\dots,|\lambda|$ and $\phi_{I_1,\dots,I_k} = \phi_{I_1} \circ \dots \circ \phi_{I_k}$. 

Next, let $\mathcal A_\gg\subset \End_{\gg}(T(V))$ be the graded
subalgebra of $\End_{\gg}(T(V))$ generated by 
$\bigoplus_{m\geq 0}\CC[S_m]$ and the contractions
$\varphi_{i,j}$. We define a $\ZZ_{\geq 0^-}$ grading  $\mathcal
A_{\gg}=\bigoplus_{q\geq 0} (\mathcal A_{\gg})_q$ by setting 
$$(\mathcal A_{\gg})_q:=\bigoplus_{m\geq 0}{\mathrm{Hom}}_\gg(T^m,T^{m-2q})\cap\mathcal A_{\gg}.$$ 
If we set $T^{\leq r}(V):=\bigoplus_{m\leq r} T^m$ and denote by
$\mathcal A_{\gg}^{(r)}$ the intersection of  
$\mathcal A_{\gg}$ with $\End_{\gg}(T^{\leq r}(V))$, then 
$\mathcal A_{\gg}=\displaystyle\lim_{\rightarrow}\mathcal
A_{\gg}^{(r)}$. It is clear that the algebra 
$\mathcal A_\gg$ can depend only on the symmetry type of the form on
$V$ but not on $V$ and the form itself. 
This justifies the notations $\mathcal A_{\oo}$ and $ \mathcal A_{\sp}$.

\begin{prop}\label{Koszul3} \cite{DPS}
\begin{enumerate}[a)]
 \item $\mathcal A_{\oo}^{(r)}\simeq \mathcal A_{\sp}^{(r)}$ for each $r\geq 0$, and  $\mathcal A_{\oo}\simeq \mathcal A_{\sp}$.
  \item If $V$ is countable dimensional, then $(\mathcal A_{\oo})_q = \bigoplus_{m\geq 0}{\mathrm{Hom}}_{\oo(V)}(T^m, T^{m-2q}),\;(\mathcal A_{\sp})_q = \bigoplus_{m\geq 0}{\mathrm{Hom}}_{\sp(V)}(T^m, T^{m-2q})$.
  \item $\mathcal A_{\oo}^{(r)}\simeq \mathcal A_{\sp}^{(r)}$ is a Koszul ring for any $r\geq0$.
\end{enumerate}
\end{prop}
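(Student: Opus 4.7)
The plan is to treat the three claims in order, mirroring the $\sl$-case of Proposition \ref{Koszul1}.

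For part (a), the point is that both $\mathcal A_\oo$ and $\mathcal A_\sp$ are generated abstractly by the same data: copies of $\CC[S_m]$ for each $m\geq 0$ together with contractions $\varphi_{i,j}$ indexed by pairs $i<j$. The difference between the orthogonal and symplectic cases enters only through signs produced by the $(\pm)$-symmetry of the defining form. I would construct an isomorphism $\mathcal A_\oo^{(r)}\to\mathcal A_\sp^{(r)}$ by twisting each permutation $\sigma\in S_m$ by the sign character $\epsilon(\sigma)$ and matching the contractions directly (equivalently, twisting the contractions and leaving permutations untouched). Checking that this twist preserves the defining relations reduces to a direct diagrammatic calculation on pairs of Brauer-like generators, where each factor of $-1$ in a composition is accounted for by the corresponding factor $\epsilon(\sigma)$. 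Passing to the direct limit over $r$ yields $\mathcal A_\oo\simeq\mathcal A_\sp$.

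For part (b), the strategy is stable Schur--Weyl--Brauer duality. Given $\phi\in\Hom_\gg(T^m,T^{m-2q})$, I would choose an exhaustion $V=\varinjlim V_i$ by finite-dimensional non-degenerate subspaces, so that $\gg=\varinjlim\gg(V_i)$ by Corollary \ref{standardex}. Restriction gives a $\gg(V_i)$-equivariant map $\phi_i\colon V_i^{\otimes m}\to V_i^{\otimes(m-2q)}$, and the first fundamental theorem of invariant theory for the classical groups of type $\oo$ or $\sp$ expresses $\phi_i$ as a finite linear combination of Brauer diagrams with $m$ upper and $m-2q$ lower strands. The index set of such diagrams is finite and independent of $i$, and compatibility of the restriction maps as $i$ grows forces the coefficients to stabilize. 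The resulting stable linear combination of permutations and contractions recovers $\phi$ as an element of $(\mathcal A_\gg)_q$.

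The main obstacle is part (c), Koszulity. The approach, following \cite{DPS}, is to exhibit an explicit quadratic presentation of $\mathcal A_\oo^{(r)}$ with degree-zero generators $\bigoplus_{m\leq r}\CC[S_m]$ and degree-one generators the single contractions $\varphi_{i,j}$, and then to verify Koszulity either by constructing a Koszul resolution of each simple module over the ring or, equivalently, by identifying the Koszul dual and checking a PBW-type numerical criterion. The hard part will be the explicit description of the space of quadratic relations and the proof that the bar complex is acyclic in the appropriate bidegrees. I would rely on the analysis carried out in \cite{DPS} in the countable-dimensional case, observing that the argument transfers because all computations take place inside the finite truncations $\mathcal A_\oo^{(r)}$, which are intrinsic objects independent of the cardinality of $V$.
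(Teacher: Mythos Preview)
The paper does not prove this proposition at all: it is stated with the citation \cite{DPS} and no argument is given. So there is nothing to compare against beyond noting that the paper imports the result wholesale from \cite{DPS}.

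Your sketch is reasonable as an outline. For (a), the sign-twist between the orthogonal and symplectic Brauer-type algebras is the standard mechanism, and it does work here because in the stable (infinite-dimensional) regime the only discrepancy between the two sets of relations is the sign of the bilinear form, absorbed exactly by $\epsilon(\sigma)$. For (b), your limit argument is fine once you note that for $\dim V_i$ large relative to $m$ the Brauer diagrams act linearly independently on $V_i^{\otimes m}$ (this is where the second fundamental theorem, or rather its stable triviality, enters), so the coefficients of $\phi_i$ are uniquely determined and hence compatible as $i$ grows. For (c) you correctly identify that the substance lies in \cite{DPS}; your proposal does not go beyond pointing there, which is exactly what the paper itself does.
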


In each of the three cases $\gg = \sl(\infty), \oo(\infty), \sp(\infty)$ we call the 
modules $V_{\lambda,\mu}$, respectively $V_{\lambda,\gg},$ the \textit{simple tensor modules} of $\gg$.

\section{The category $\TT_{\gg}$}\label{sect:5}

\subsection{The countable-dimensional case}\label{sect:5.1} 
In this subsection we assume that $\gg=\sl(V,V_*),\oo(V)$ or $\sp(V)$ for a countable-dimensional space $V$. The category  $\TT_{\gg}$ has been studied in \cite{DPS}, and here we review some key results.

Denote by $\tilde{G}$ the group of automorphisms of $V$ under which $V_*$ is stable for $\gg = \sl(V,V_*)$, and the group of automorphisms of $V$ which keep fixed the form on $V$ which defines $\gg$. The group $\tilde{G}$ is a subgroup of $\mathrm{Aut}\gg$ and therefore acts naturally on isomorphism classes of $\gg-$modules: to each $\gg-$module $M$ one assigns the twisted $\gg-$module $M_{\tilde{g}}$ for $\tilde{g} \in \tilde{G}$. A $\gg-$module $M$ is \textit{$\tilde{G}-$invariant} if $M \simeq M_{\tilde{g}}$ for all $\tilde{g} \in \tilde{G}$. 

Furthermore, define a $\gg-$module $M$ to be an \textit{absolute weight module} if the decomposition (5) holds for any local Cartan subalgebra of $\gg$, i.e. if $M$ is a weight module for any local Cartan subalgebra $\hh$ of $\gg$. In \cite{DPS} we have given five equivalent characterizations of the objects of $\TT_{\gg}$.

\begin{theo}\label{th:6.1}\cite{DPS} The following conditions on a $\gg-$module $M$ of finite length are equivalent:
\begin{enumerate}[i)]
 \item $M$ is an object of $\TT_{\gg}$;
 \item $M$ is a weight module for some local Cartan subalgebra $\hh \subset \gg$ and $M$ is $\tilde{G}-$invariant;
 \item $M$ is a subquotient of $T(V\oplus V_*)$ for $\gg = \sl(V,V_*)$ (respectively, of $T(V)$ for $\gg = \oo(V), \sp(V)$);
 \item $M$ is a submodule of $T(V\oplus V_*)$ for $\gg = \sl(V,V_*)$ (respectively, of $T(V)$ for $\gg = \oo(V), \sp(V)$);
 \item $M$ is an absolute weight module.
\end{enumerate}
\end{theo}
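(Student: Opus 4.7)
The plan is to prove the cycle $(i)\Rightarrow(v)\Rightarrow(ii)\Rightarrow(iv)\Rightarrow(iii)\Rightarrow(i)$, with the classification of simple objects of $\TT_\gg$ as the anchor. The easiest links are $(iv)\Rightarrow(iii)$, which is tautological, and $(iii)\Rightarrow(i)$: each $T^{m,n}$ (resp.\ $T^m$) is integrable, and any tensor lies in $V_f^{\otimes m}\otimes W_f^{\otimes n}$ for some finite-dimensional non-degenerate pair, hence is annihilated by the finite-corank subalgebra $\sl(W_f^\perp,V_f^\perp)$ (resp.\ $\oo(V_f^\perp)$ or $\sp(V_f^\perp)$); finite-length subquotients inherit both properties. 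The implication $(v)\Rightarrow(ii)$ reduces to the fact that $\tilde G$ acts transitively on local Cartan subalgebras, which follows from Proposition~\ref{prop:1.4.10}: every local Cartan is determined by a splitting basis of $V$ (and $V_*$), and $\tilde G$ acts transitively on such bases.

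The heart of the proof is the classification of simple objects of $\TT_\gg$ as the simple tensor modules $V_{\lambda,\mu}$ (resp.\ $V_{\lambda,\gg}$) of Section~\ref{sect:4}. Given a simple $M\in\TT_\gg$ and $0\neq m\in M$, the large annihilator condition produces a finite-corank subalgebra $\kk\subset\gg$ annihilating $m$, so that $U(\gg_f)\cdot m$ is a finite-dimensional semisimple module over a complementary subalgebra $\gg_f$ of the form $\sl(V_f,W_f)$, $\oo(V_f)$ or $\sp(V_f)$. Finite-dimensional Schur--Weyl duality identifies the simple $\gg_f$-constituent of $m$ as $(V_f)_{\lambda,\mu}$ (resp.\ $(V_f)_{\lambda,\gg_f}$), and enlarging the non-degenerate pair along Lemma~\ref{lemma:1.4.10} forces the partition data to stabilize; a direct-limit argument then yields $M\simeq V_{\lambda,\mu}$ (resp.\ $V_{\lambda,\gg}$). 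In particular every simple object of $\TT_\gg$ is an absolute weight module, establishing $(i)\Rightarrow(v)$ on simple objects. To pass to finite-length objects one checks that absolute weight modules are closed under extensions inside $\TT_\gg$: weight-space dimensions are controlled by the $\gg_f$-isotypic decompositions and remain finite on extensions, and the large annihilator condition transports weight decompositions across short exact sequences.

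For $(ii)\Rightarrow(iv)$ I would again reduce to simple $M$, invoking the injectivity of each $T^{m,n}$ (resp.\ $T^m$) in $\TT_\gg$ coming from the Koszul structure of $\mathcal A_{\sl}$, $\mathcal A_{\oo}$, $\mathcal A_{\sp}$ in Propositions~\ref{Koszul1} and~\ref{Koszul3}; given this, any embedding of $\Soc M$ into the tensor algebra extends to $M$ by injectivity. For a simple $M$ satisfying $(ii)$, transitivity of $\tilde G$ on local Cartans combined with $\tilde G$-invariance upgrades $M$ to an absolute weight module, and the classification identifies $M$ with a simple tensor module, which embeds tautologically into $T(V\oplus V_*)$ or $T(V)$.

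The main obstacle is the classification step itself: passing from the stabilized finite-dimensional Schur--Weyl data attached to each $\gg_f$ to a genuine isomorphism $M\simeq V_{\lambda,\mu}$ in the direct limit requires coherent choices under every enlargement of the pair $(V_f,W_f)$, and one must rule out exotic direct-limit modules with the same finite-dimensional shadow. Equally delicate is verifying that absolute-weight and $\tilde G$-invariance actually pass through extensions inside $\TT_\gg$ — the subtlety being that $\TT_\gg$ contains indecomposable objects of length greater than one (e.g.\ $V\otimes V_*$ from the non-split sequence~\refeq{seqv}), so the closure properties must be checked by socle-filtration arguments rather than by semisimplicity.
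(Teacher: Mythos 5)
This theorem is quoted from \cite{DPS}; the present paper gives no proof of it, so your proposal can only be judged on its own merits. Several of your arrows are fine and match the techniques the paper does use elsewhere: $(iv)\Rightarrow(iii)$ is trivial, $(iii)\Rightarrow(i)$ is exactly the observation that $T^{m,n}$ is integrable and satisfies the large annihilator condition (both properties pass to subquotients), and your sketch of the classification of simple objects of $\TT_\gg$ is essentially Lemma \ref{tensor} of the paper (parabolic induction plus Frobenius reciprocity), modulo the stabilization issues you yourself flag.

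There are, however, two genuine logical gaps in the cycle. First, $(v)\Rightarrow(ii)$ does not reduce to transitivity of $\tilde G$ on local Cartan subalgebras. Transitivity gives the \emph{converse} transport: if $M$ is a weight module for $\hh$ and $M\simeq M_{\tilde g}$ for all $\tilde g$, then $M$ is a weight module for every local Cartan. But knowing that $M$ is $\hh$-semisimple for every local Cartan $\hh$ says nothing about the isomorphism class of the twist $M_{\tilde g}$; $\tilde G$-invariance is only visible once $M$ has been realized inside $T(V\oplus V_*)$, on which $\tilde G$ acts. Second, and more seriously, your step $(ii)\Rightarrow(iv)$ invokes ``the classification'' to identify a simple module satisfying $(ii)$ with a simple tensor module — but the classification you established (and the one available from Lemma \ref{tensor}) applies only to simple objects of $\TT_\gg$, i.e., it \emph{assumes} the large annihilator condition $(i)$. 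In a cycle of implications each arrow must stand on its own hypothesis, so you are using $(i)$ while proving $(ii)\Rightarrow(iv)$. The actual hard content of this theorem in \cite{DPS} is precisely the missing bridge: showing that a finite-length absolute weight module (equivalently, a $\tilde G$-invariant weight module) satisfies the large annihilator condition. No argument for this appears in your proposal, and without it the conditions $(ii)$ and $(v)$ are never connected back to $(i)$, $(iii)$, $(iv)$. (A smaller repairable point: closure of the absolute weight property under extensions is most cleanly obtained from the injectivity of the $T^{m,n}$ in $\TT_\gg$, i.e., from $(i)\Rightarrow(iv)$, rather than from the vague ``transport of weight decompositions'' you describe; an extension of $\hh$-semisimple modules by $\hh$-semisimple modules need not be $\hh$-semisimple in general.)
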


Furthermore, the following two theorems are crucial for understanding the structure of $\TT_\gg$.

\begin{theo}\label{kos1} \cite{ps} \cite{DPS} 
   The simple objects in the categories $\widetilde{\mathrm{Tens}}_{\gg}$ and $\TT_{\gg}$ coincide and are all of the form $V_{\lambda,\mu}$ for $\gg = \sl(V,V_*)$, 
or respectively $V_{\lambda,\gg}$ for $\gg=\oo(V), \sp(V)$.
\end{theo}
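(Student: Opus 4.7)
The plan is to establish both classifications simultaneously by reducing each to the statement that a simple module $M$ appears as a simple subquotient of the tensor algebra $T(V\oplus V_*)$ (respectively $T(V)$); at that point \refth{soc-sl} and its orthogonal/symplectic analogue identify $M$ with the appropriate $V_{\lambda,\mu}$ or $V_{\lambda,\gg}$.

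First I would verify that each $V_{\lambda,\mu}$ (resp.\ $V_{\lambda,\gg}$) lies in both $\TT_\gg$ and $\widetilde{\mathrm{Tens}}_\gg$. Each is simple, integrable, and sits as a submodule of $T^{|\lambda|,|\mu|}$ (resp.\ $T^{|\lambda|}$). The large annihilator condition is immediate: any element is a finite linear combination of tensor monomials supported on a finite-dimensional non-degenerate pair $V_0 \subset V,\,W_0 \subset W$, hence annihilated by $\sl(W_0^\perp, V_0^\perp)$ (or $\oo(V_0^\perp)$, $\sp(V_0^\perp)$ in the respective cases). Together with simplicity and integrability, this places $V_{\lambda,\mu}$ in $\TT_\gg$. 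For membership in $\widetilde{\mathrm{Tens}}_\gg$, the Loewy length of $V_{\lambda,\mu}$ itself is $1$, and one must further check that $\Gamma_\gg(V_{\lambda,\mu}^*)$ has finite Loewy length; the way to do this is to identify the integrable dual explicitly as another tensor module (for instance $V_{\mu,\lambda}$ in the $\sl$ case and $V_{\lambda,\gg}$ itself in the orthogonal/symplectic cases), as carried out in \cite{ps}.

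The converse direction for $\TT_\gg$ is then painless: by \refth{th:6.1}(iv), any simple $M \in \TT_\gg$ embeds into $T(V\oplus V_*)$ (resp.\ $T(V)$), hence into some $T^{m,n}$ (resp.\ $T^m$). Being simple, $M$ is contained in $\Soc\,T^{m,n} = F_1^{m,n}$, and \refth{soc-sl} identifies the simple constituents of $F_1^{m,n}$ as the $V_{\lambda,\mu}$ with $|\lambda|=m$, $|\mu|=n$; the argument is symmetric in the $\oo$ and $\sp$ cases.

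The main obstacle is the converse for $\widetilde{\mathrm{Tens}}_\gg$: showing that every simple object of $\widetilde{\mathrm{Tens}}_\gg$ in fact belongs to $\TT_\gg$, i.e.\ satisfies the large annihilator condition. Following \cite{ps}, the plan is to exploit the canonical injection $M \hookrightarrow \Gamma_\gg(\Gamma_\gg(M^*)^*)$ of \refeq{can_inj_hom} together with the finite Loewy length of both $M$ and $\Gamma_\gg(M^*)$. A careful analysis of the socle filtration of this double integrable dual should produce a finite-dimensional subspace of generators for $M$ supported inside a finite tensor power of $V$ and $V_*$; simplicity of $M$ then propagates the large annihilator condition from these generators to all of $M$. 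Once $M$ is known to lie in $\TT_\gg$, the previous step concludes. The real content of the argument — where integrability, the two-sided finite Loewy condition, and duality are all genuinely used — is precisely this reduction from $\widetilde{\mathrm{Tens}}_\gg$ down to $\TT_\gg$.
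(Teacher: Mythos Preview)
The paper does not itself prove this theorem; it is recorded as a result of \cite{ps} and \cite{DPS}, so there is no in-paper argument to compare against directly. Your outline for the $\TT_\gg$ direction is sound and is essentially what the paper later carries out in full generality as Lemma~\ref{tensor}: a simple object with the large annihilator condition embeds in some $T^{m,n}$, hence in its socle, and Theorem~\ref{soc-sl} identifies it as some $V_{\lambda,\mu}$.

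The $\widetilde{\mathrm{Tens}}_\gg$ direction, however, is where your proposal has a genuine gap. You assert that the embedding $M \hookrightarrow \Gamma_\gg(\Gamma_\gg(M^*)^*)$ together with finite Loewy length ``should produce a finite-dimensional subspace of generators for $M$ supported inside a finite tensor power of $V$ and $V_*$,'' but you give no mechanism for this. The embedding \eqref{can_inj_hom} exists for \emph{every} integrable module and by itself carries no information about tensor powers; finite Loewy length of $\Gamma_\gg(M^*)$ does not, on its own, locate any vector inside $T^{m,n}$. What you have written is a hope, not an argument, and you essentially concede this with the word ``should.'' The route taken in \cite{ps} is more indirect: one uses the fact (stated just above Theorem~\ref{kos1} in the present paper) that $\Gamma_\gg(M^*)=M^*$ for $M\in\widetilde{\mathrm{Tens}}_\gg$, so that $M^*$ is integrable and hence $M\in\mathrm{Int}_\gg^{\mathrm{alg}}$ by Proposition~\ref{alg1}; the classification then proceeds through the structure of simple objects in $\mathrm{Int}_\gg^{\mathrm{alg}}$ subject to the Loewy constraint on $M^*$. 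Your sketch does not supply this bridge, and without it the reduction from $\widetilde{\mathrm{Tens}}_\gg$ to $\TT_\gg$ is not established.
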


\begin{theo}\label{kos2}\cite{DPS} a) $\TT_{\gg}$ has enough injectives. If $\gg=\sl(V,V_*)$, then $V_\lambda\otimes (V_*)_\mu$ is an injective hull of
$V_{\lambda,\mu}$.  If $\gg=\oo(V)$ or $\sp(V)$, then $V_\lambda$ is an injective hull of $V_{\lambda,\gg}$.

b) $\TT_{\gg}$ is anti-equivalent to the category of locally unitary finite-dimensional $\mathcal{A}_\gg-$modules.
\end{theo}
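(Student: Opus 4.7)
The strategy for a) is to prove that $V_\lambda \otimes (V_*)_\mu$ is injective in $\TT_\gg$ and then to use Theorem \ref{soc-sl} to identify its socle as the simple module $V_{\lambda,\mu}$, so that it is automatically the injective hull of $V_{\lambda,\mu}$. The key step is injectivity. The plan is to exhibit $T(V\oplus V_*)=\bigoplus_{m,n\geq 0} T^{m,n}$ as an injective cogenerator of $\TT_\gg$, from which the summand $V_\lambda \otimes (V_*)_\mu\subset T^{|\lambda|,|\mu|}$ cut out by the Schur projector onto $\CC_\lambda\otimes\CC_\mu$ inherits injectivity. To show $T^{m,n}$ is injective in $\TT_\gg$, I would consider any inclusion $N\hookrightarrow M$ in $\TT_\gg$ and a morphism $f:N\to T^{m,n}$: using the large annihilator condition, for each $x\in M$ there is a finite-corank subalgebra of the form $\sl(W_f^\perp, V_f^\perp)$ annihilating $x$; choosing $V_f, W_f$ large enough to contain the images of $f$ on a finite generating set, the extension problem reduces to an extension problem for finite-dimensional $\sl(V_f, W_f)$-modules (where $T^{m,n}$ restricts to $V_f^{\otimes m}\otimes W_f^{\otimes n}$ plus a trivial $\sl(V_f,W_f)$-complement), which is solved by Weyl's complete reducibility theorem. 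Coherence of these local extensions under enlargement of the pair $(V_f, W_f)$ delivers a global extension.

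Once $V_\lambda\otimes(V_*)_\mu$ is known to be the injective hull of $V_{\lambda,\mu}$, the existence of enough injectives in $\TT_\gg$ follows by a short induction on length: Theorem \ref{kos1} tells us every simple object is of the form $V_{\lambda,\mu}$, and given $M\in\TT_\gg$ one embeds its socle into a finite direct sum of injective hulls, splits off the image using the defining property of injectives, and iterates on the quotient. The orthogonal and symplectic cases are parallel, with $V_\lambda$ in place of $V_\lambda\otimes(V_*)_\mu$, $V_{\lambda,\gg}$ in place of $V_{\lambda,\mu}$, and the $\oo$-/$\sp$-analogue of Theorem \ref{soc-sl} (described after Proposition \ref{Koszul1}) in place of Theorem \ref{soc-sl} itself.

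For part b), define the contravariant functor $\Phi:\TT_\gg\to\mathcal{A}_\gg\text{-mod}$ by $\Phi(M):=\Hom_\gg(M,T)$, where $T:=T(V\oplus V_*)$ for $\gg=\sl(V,V_*)$ and $T:=T(V)$ for $\gg=\oo(V),\sp(V)$; the $\mathcal{A}_\gg$-module structure comes from the action of $\mathcal{A}_\gg$ on $T$. By part a), $T$ is an injective cogenerator of $\TT_\gg$, so $\Phi$ is exact and faithful. For any $M\in\TT_\gg$, finite length together with the socle filtration imply $\Phi(M)$ is concentrated in $\Hom_\gg(M, T^{\leq r})$ for some $r$, making $\Phi(M)$ a finite-dimensional module over $\mathcal{A}_\gg^{(r)}$ and hence locally unitary over $\mathcal{A}_\gg$. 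Full faithfulness reduces via the injective-cogenerator property to the identification $\End_\gg(T^{\leq r})=\mathcal{A}_\gg^{(r)}$, established in Proposition \ref{Koszul1}a) (resp.~Proposition \ref{Koszul3}b)) in the countable-dimensional setting. For essential surjectivity I would construct a quasi-inverse $\Psi$ defined by $\Psi(X):=\Hom_{\mathcal{A}_\gg^{(r)}}(X, T^{\leq r})$ with its induced $\gg$-action, and verify on the projective generators of $\mathcal{A}_\gg^{(r)}\text{-mod}$ (which correspond under $\Phi$ to the injective hulls $V_\lambda\otimes(V_*)_\mu$ from part a)) that the adjunction unit and counit are isomorphisms.

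The main obstacle will be twofold: checking that $\Psi(X)$ indeed lies in $\TT_\gg$ — finite length and the large annihilator condition are not automatic for a generic hom-space — and that the adjunction isomorphisms extend from projective/injective hulls to all objects. The latter point translates into matching the socle filtration of Theorem \ref{soc-sl} on the $\TT_\gg$-side with the radical filtration of $\mathcal{A}_\gg^{(r)}$-modules coming from the Koszul grading of Propositions \ref{Koszul1}b) and \ref{Koszul3}c), and it is this compatibility of non-semisimple structures on the two sides that constitutes the technical heart of the argument.
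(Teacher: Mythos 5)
The paper does not reprove this theorem --- it is quoted from \cite{DPS} --- but its key ingredient, the injectivity of $T^{m,n}$ in $\TT_\gg$, is proved in the paper itself (in greater generality) as Corollary \ref{injective}, and it is exactly at this step that your argument has a genuine gap. You propose to extend $f\colon N\to T^{m,n}$ along $N\hookrightarrow M$ by solving extension problems over $\sl(V_f,W_f)$ via Weyl's complete reducibility and then asserting that ``coherence of these local extensions under enlargement of the pair delivers a global extension.'' The local extensions furnished by semisimplicity are not canonical, and choosing them compatibly across the directed system is precisely where the non-semisimplicity of $\TT_\gg$ lives: if this scheme worked for an arbitrary target $L\in\TT_\gg$ in place of $T^{m,n}$, it would show that every object of $\TT_\gg$ is injective, i.e.\ that $\TT_\gg$ is semisimple, which is false (the sequence $0\to\sl(V,V_*)\to V\otimes V_*\to\CC\to 0$ does not split, yet for $N=\gg\subset M=V\otimes V_*$ and $f=\id$ a splitting exists at every finite level). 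Your parenthetical description of the restriction is also incorrect: over $\sl(V_f,W_f)$, the complement of $V_f^{\otimes m}\otimes W_f^{\otimes n}$ in $T^{m,n}$ is not trivial but contains infinitely many copies of the nontrivial modules $V_f^{\otimes a}\otimes W_f^{\otimes b}$ with $a\leq m$, $b\leq n$. What makes $T^{m,n}$ special is that its invariants under a finite-corank subalgebra $\kk=\sl(W_f^\perp,V_f^\perp)$ are the \emph{finite-dimensional} spaces $V_f^{\otimes m}\otimes W_f^{\otimes n}$; the large annihilator condition must be used to cut the target down to these invariants before any compatibility (Mittag--Leffler type) argument has a chance. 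The clean route, which the paper takes, is: $(T^{n,m})^*$ is injective in $\Int_\gg$ by Theorem \ref{int1}, the functor $\Gamma_\gg^{ann}$ is right adjoint to the inclusion $\TT_\gg\rightsquigarrow\Int_\gg$ and hence preserves injectives, and $T^{m,n}$ is a direct summand of $\Gamma_\gg^{ann}((T^{n,m})^*)$ by Lemma \ref{inv2}.

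Granting injectivity of $T^{m,n}$, your identification of $V_\lambda\otimes(V_*)_\mu$ as the injective hull of $V_{\lambda,\mu}$ (a Schur--Weyl direct summand of $T^{|\lambda|,|\mu|}$ with simple socle by Theorem \ref{soc-sl}) and the deduction of enough injectives from Theorem \ref{kos1} are correct. Your outline for b), via the exact faithful contravariant functor $M\mapsto\Hom_\gg(M,T)$ into modules over $\End_\gg(T^{\leq r})=\mathcal A_\gg^{(r)}$, is the standard and correct strategy; note, however, that you only name rather than carry out the two verifications you yourself flag (that the proposed quasi-inverse lands in $\TT_\gg$, and that the adjunction maps are isomorphisms beyond injectives/projectives), so as written b) is an accurate plan rather than a proof.
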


Theorem \ref{kos2} means that the category $\TT_\gg$ is ``Koszul'' in the sense that it is anti-equivalent to a module category over the infinite-dimensional 
Koszul algebra $\mathcal{A}_\gg$. 

\begin{corollary}
$\TT_{\oo(\infty)}$ and  $\TT_{\sp(\infty)}$ are equivalent abelian categories.
\end{corollary}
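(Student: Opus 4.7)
The plan is to combine Theorem \ref{kos2}b) with Proposition \ref{Koszul3}a). By Theorem \ref{kos2}b), the category $\TT_{\oo(\infty)}$ is anti-equivalent to the category of locally unitary finite-dimensional $\mathcal{A}_{\oo}$-modules, and $\TT_{\sp(\infty)}$ is anti-equivalent to the category of locally unitary finite-dimensional $\mathcal{A}_{\sp}$-modules. By Proposition \ref{Koszul3}a), $\mathcal{A}_{\oo}\simeq \mathcal{A}_{\sp}$ as $\ZZ_{\geq 0}$-graded algebras.

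First, I would observe that the isomorphism of rings $\mathcal{A}_{\oo}\simeq \mathcal{A}_{\sp}$ induces an equivalence between their module categories. Since the notion of ``locally unitary'' can be formulated purely in terms of the filtration by the subalgebras $\mathcal{A}_{\gg}^{(r)}$ (namely, a module is locally unitary if every element is annihilated by the ideal corresponding to $\mathcal{A}_{\gg}/\mathcal{A}_{\gg}^{(r)}$ for some $r$, or whatever the precise definition from \cite{DPS} is), and Proposition \ref{Koszul3}a) provides the isomorphisms $\mathcal{A}_{\oo}^{(r)}\simeq \mathcal{A}_{\sp}^{(r)}$ compatibly in $r$, this equivalence restricts to an equivalence between the categories of locally unitary finite-dimensional modules.

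Second, I would compose the anti-equivalence $\TT_{\oo(\infty)} \simeq (\mathcal{A}_{\oo}\text{-mod}_{\text{lu,fin}})^{\mathrm{op}}$ with this equivalence and then with the anti-equivalence $(\mathcal{A}_{\sp}\text{-mod}_{\text{lu,fin}})^{\mathrm{op}} \simeq \TT_{\sp(\infty)}$. The two ``op''s cancel, yielding an equivalence of abelian categories $\TT_{\oo(\infty)} \simeq \TT_{\sp(\infty)}$.

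There is essentially no obstacle here since both ingredients have been established earlier in the paper. The only point requiring a line of justification is that the isomorphism $\mathcal{A}_{\oo}\simeq \mathcal{A}_{\sp}$ respects the filtrations by $\mathcal{A}_{\gg}^{(r)}$ (so that the locally unitary condition is preserved), which is immediate from the statement of Proposition \ref{Koszul3}a) giving compatible isomorphisms at each finite level $r$.
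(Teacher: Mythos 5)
Your proposal is correct and is exactly the argument the paper intends: the corollary is stated immediately after Theorem \ref{kos2} precisely because it follows by composing the two anti-equivalences from Theorem \ref{kos2}b) via the filtered isomorphism $\mathcal{A}_{\oo}\simeq\mathcal{A}_{\sp}$ of Proposition \ref{Koszul3}a). Your remark that the isomorphism is compatible with the subalgebras $\mathcal{A}_{\gg}^{(r)}$, so that the locally unitary condition is preserved, is the one point worth making explicit, and you have made it.
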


In fact, the stronger result that $\TT_{\oo(\infty)}$ and  $\TT_{\sp(\infty)}$ are equivalent as monoidal categories also holds, see \cite{SS} and \cite{S}.

\vspace{0.5cm}

\subsection{The general case}\label{sect:5.2}
In this subsection we prove the following result.
\begin{theo}\label{equivmain} 
Let $\gg=\sl(V,W)$, $\oo(V), \sp(V)$. Then, as a monoidal category, $\TT_\gg$ is equivalent to $\TT_{\sl(\infty)}$ or  $\TT_{\oo(\infty)}$.
\end{theo}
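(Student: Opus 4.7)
\textit{ of Theorem \ref{equivmain}.}
The plan is to verify that each step in the proof of Theorems \ref{kos1} and \ref{kos2} for the countable-dimensional case carries over to arbitrary $V, W$, and then invoke Theorem \ref{kos2} b) to obtain the claimed equivalence. I focus on $\gg = \sl(V,W)$; the orthogonal and symplectic cases are parallel.

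The first step is to establish the general-$V,W$ analog of Theorem \ref{th:6.1}, namely that every finite-length object of $\TT_\gg$ embeds into $T(V \oplus W)$ and that its simple constituents are the modules $V_{\lambda,\mu}$ indexed by pairs of Young diagrams. The large annihilator condition is tailor-made for reducing to the countable-dimensional case: given $L \in \TT_\gg$ and $l \in L$, the annihilator of $l$ contains $\sl(V_0^\perp, W_0^\perp)$ for some finite-dimensional non-degenerate pair $V_0, W_0$; fixing a countable-dimensional non-degenerate subpair $V_c \supset V_0,\, W_c \supset W_0$, the $\sl(V_c, W_c)$-submodule generated by $l$ lies in $\TT_{\sl(V_c, W_c)} \simeq \TT_{\sl(\infty)}$ and hence, by Theorem \ref{th:6.1} iv), embeds into $T(V_c \oplus W_c) \hookrightarrow T(V \oplus W)$. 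Combined with Theorem \ref{soc-sl}, which is already available in full generality, this yields the classification of simples, and by the same reduction shows that $V_\lambda \otimes W_\mu$ is the injective hull of $V_{\lambda,\mu}$ in $\TT_\gg$.

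The crucial second step is to identify the endomorphism algebra $\End_\gg(T^{\leq r}(V,W))$ with $\mathcal{A}_{\sl}^{(r)}$. The inclusion $\mathcal{A}_{\sl}^{(r)} \subseteq \End_\gg(T^{\leq r}(V,W))$ is immediate from the definition; for the reverse inclusion, I would fix $\varphi \in \Hom_\gg(T^{m,n}, T^{m',n'})$ and restrict to $V_f^{\otimes m} \otimes W_f^{\otimes n}$ for a sufficiently large finite-dimensional non-degenerate pair $V_f \subset V,\, W_f \subset W$. Classical mixed Schur--Weyl duality expresses this restriction as a linear combination of permutations of tensor factors and contractions; the $\gg$-equivariance of $\varphi$ forces the coefficients to be independent of the choice of $V_f, W_f$, so passing to the direct limit produces the desired element of $\mathcal{A}_{\sl}^{(r)}$ equal to $\varphi$. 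With the algebra identified, the proof of Theorem \ref{kos2} b) from \cite{DPS} applies verbatim to produce an anti-equivalence $\TT_\gg \simeq \TT_{\sl(\infty)}$ of abelian categories. The monoidal structure is automatically preserved, because the comultiplication on $\mathcal{A}_{\sl}$ dual to the tensor product is defined intrinsically on $\mathcal{A}_{\sl}$, independently of the choice of $V,W$.

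The principal obstacle will be the endomorphism ring computation: one must rule out ``exotic'' $\gg$-equivariant homomorphisms peculiar to the uncountable setting, which requires a careful compatibility check of the permutation-and-contraction data obtained by restriction to every finite-dimensional non-degenerate pair. Once this compatibility is secured, the remainder of the argument is a formal consequence of the Koszul machinery developed in \cite{DPS}.
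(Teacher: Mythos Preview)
Your overall strategy---reduce to the countable case and then invoke the Koszul-algebra description from \cite{DPS}---is reasonable, and your second step (identifying $\End_\gg(T^{\leq r}(V,W))$ with $\mathcal{A}_{\sl}^{(r)}$) is essentially correct and close to what the paper does in Corollary~\ref{corr:1}~a). However, there is a genuine gap in your first step, and the monoidal claim at the end is not justified.

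The gap is this: from the large annihilator condition you produce, for a given $l \in L$, a $\gg_c$-module embedding of $U(\gg_c)\cdot l$ into $T(V_c \oplus W_c)$. But this is only a $\gg_c$-module map; it does not give you a $\gg$-module embedding of $L$ into $T(V \oplus W)$. The inclusion $T(V_c \oplus W_c) \hookrightarrow T(V \oplus W)$ is not $\gg$-equivariant (the left-hand side is not even a $\gg$-submodule), so you cannot simply compose. To promote the $\gg_c$-information to a $\gg$-module embedding you need an extra argument---in the paper this is Lemma~\ref{tensor}, which uses a $\ZZ$-grading $\gg = \gg^{-1}\oplus\gg^0\oplus\gg^1$ and parabolic induction (Frobenius reciprocity) to produce an honest $\gg$-map $L \to T^{m,n}$. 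Your sentence ``by the same reduction shows that $V_\lambda \otimes W_\mu$ is the injective hull'' has the same problem: injectivity in $\TT_\gg$ does not follow from injectivity in $\TT_{\gg_c}$ by restriction; the paper proves it separately (Corollary~\ref{injective}) via the right-adjoint functor $\Gamma^{ann}_\gg$.

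It is also worth noting that the paper takes a rather different route from yours. Rather than passing through the Koszul algebra $\mathcal{A}_{\sl}$ as an intermediary, the paper builds an explicit functor $\Phi = \Gamma^{ann}_{\gg_c}: \TT_\gg \to \TT_{\gg_c}$, sending $L$ to the union of its $\kk$-invariants over finite-corank subalgebras $\kk$ cut out by finite-dimensional pieces of $V_c, W_c$, and proves directly that $\Phi$ is a monoidal equivalence. The monoidal compatibility $\Phi(L\otimes N)\simeq \Phi(L)\otimes\Phi(N)$ is checked by hand (Lemma~\ref{tensphi}) using $(L\otimes N)^\kk = L^\kk \otimes N^\kk$. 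Your assertion that the monoidal structure is ``automatically preserved'' because a comultiplication on $\mathcal{A}_{\sl}$ is intrinsic is not substantiated: no such coalgebra structure is set up in \cite{DPS}, and the anti-equivalence of Theorem~\ref{kos2}~b) is stated only at the level of abelian categories. If you want to run your argument, you would need to supply this monoidal comparison explicitly.
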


The proof of Theorem \ref{equivmain} is accomplished by proving several lemmas and corollaries. 

\begin{lemma}\label{inv1} 
a) Let $\gg=\sl(V,W)$ and $C_{m,n}:=\Hom_\gg(T^{m,n},\CC)$. If $m\neq n$, then $C_{m,n}=0$, and if $m=n$, then
$C_{m,m}$ is spanned by $\tau_\pi$ for all $\pi\in S_m$, where
$$\tau_\pi(v_1\otimes\dots\otimes v_m\otimes w_1\otimes \dots\otimes w_m)=\prod_{i=1}^m\,\tr(v_i\otimes w_{\pi(i)}).$$

b) Let  $\gg=\oo(V)$ or $\sp(V)$. Then $\Hom_\gg(T^{2m+1},\CC)=0$  and $\Hom_\gg(T^{2m},\CC)$ is spanned by
$\sigma_\pi$  for all $\pi\in S_m$, where
$$\sigma_\pi(v_1\otimes\dots\otimes v_{2m})=\prod_{i=1}^m(v_i,v_{m+\pi(i)}).$$
\end{lemma}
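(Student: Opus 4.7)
The plan is to reduce both statements to the finite-dimensional First Fundamental Theorem (FFT) of classical invariant theory via the presentation $\gg = \varinjlim \gg_f$ of Corollary \ref{standardex}. For part a), I would write $\gg = \varinjlim \sl(V_f, W_f)$ over the directed system of finite-dimensional non-degenerate pairs $V_f \subset V$, $W_f \subset W$, so that $T^{m,n} = \bigcup_f V_f^{\otimes m} \otimes W_f^{\otimes n}$ and any $\phi \in C_{m,n}$ restricts to an $\sl(V_f,W_f)$-invariant functional $\phi_f$ on each finite piece. For part b), I would use $\gg = \varinjlim \oo(V_f)$ (respectively $\sp(V_f)$), $T^k = \bigcup_f V_f^{\otimes k}$, and restrict analogously.

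For $\dim V_f$ sufficiently large relative to the number of tensor factors, the classical FFT computes these invariant spaces explicitly. In part a), $\Hom_{\sl(V_f,W_f)}(V_f^{\otimes m} \otimes W_f^{\otimes n}, \CC)$ vanishes for $m \neq n$ and is spanned by the contraction functionals $\tau_\pi^f$ indexed by $\pi \in S_m$ when $m = n$. Any potential extra $\sl$-invariants beyond the $\gl$-invariants involve a determinant on $V_f$ and therefore require at least $\dim V_f$ tensor factors, so once $\dim V_f > \max\{m,n\}$ only the $\tau_\pi^f$ remain. In part b), $\Hom_{\gg_f}(V_f^{\otimes k},\CC)$ vanishes for $k$ odd and, for $k=2m$, is spanned by the pairing products $\sigma_\pi^f$. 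Since $\tau_\pi^{f'}$ restricts to $\tau_\pi^f$ under $V_f \subset V_{f'}$ (and similarly $\sigma_\pi^{f'}$ to $\sigma_\pi^f$), the expansion coefficients of $\phi_f$ are independent of $f$ for large enough $V_f$, yielding global coefficients $c_\pi$ with $\phi = \sum_\pi c_\pi \tau_\pi$ (respectively $\phi = \sum_\pi c_\pi \sigma_\pi$).

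To close the argument, linear independence of the $\tau_\pi$ on $T^{m,m}$ is checked by evaluating on test tensors: choose $e_1, \ldots, e_m \in V$ and $e_1^*, \ldots, e_m^* \in W$ with $\tr(e_i \otimes e_j^*) = \delta_{ij}$, and set $u_\sigma := e_1 \otimes \cdots \otimes e_m \otimes e_{\sigma(1)}^* \otimes \cdots \otimes e_{\sigma(m)}^*$ for $\sigma \in S_m$. A direct computation gives $\tau_\pi(u_\sigma) = \delta_{\pi,\sigma^{-1}}$, so the evaluation matrix is a permutation matrix and the $\tau_\pi$ are independent. An analogous test tensor built from a hyperbolic basis of some $V_f$ separates the $\sigma_\pi$ in part b). The main technical obstacle is the careful invocation of the FFT in the precise form claimed — especially verifying in part a) that determinant-type $\sl$-invariants become negligible in the infinite-dimensional limit, and, in part b), matching the asserted spanning set against the standard FFT output.
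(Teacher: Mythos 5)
Your proposal is correct and follows the same route as the paper, which simply observes that $T^{m,n}$ (respectively $T^m$) is a direct limit of finite-dimensional modules of the same type and invokes the fundamental theorem of invariant theory. Your additional details — the disappearance of determinant-type $\sl$-invariants once $\dim V_f > \max\{m,n\}$, the stability of the coefficients $c_\pi$ under enlarging $V_f$, and the test-tensor computation $\tau_\pi(u_\sigma)=\delta_{\pi,\sigma^{-1}}$ — are exactly the points the paper leaves to the reader, and they check out.
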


\begin{proof} In the finite-dimensional case the same statement is the fundamental theorem of invariant theory. Since $T^{m,n}$ for $\gg = \sl(V,W)$ (respectively, $T^m$ for $\gg = \oo(V),\sp(V)$) is a direct limit of finite-dimensional representations of the same type, the statement follows from the fundamental theorem of invariant theory. 
\end{proof}

Let $L$ be a $\gg$-module and let $\gg'$ denote a subalgebra of $\gg$
of the form $\sl(V', W')$ (respectively, $\oo(V'),\sp(V')$) 
for some non-degenerate pair 
$V'\subset V, W'\subset W$ (respectively, non-degenerate subspace $V' \subset V$). 
Let $(V_f',W_f')$ be a finite-dimensional non-degenerate pair satisfying
$V_f'\subset V', W_f'\subset W'$ (respectively, $V_f'\subset V'$) and
let 
$\kk'=\sl((W_f')^\perp,(V_f')^\perp)\subset \gg$ (respectively $\kk'=\oo((V_f')^\perp), \sp(V_f')^\perp)$). 
Then $L^{\kk'}$ is an $\sl(W_f',V_f')-$module (respectively, an
$\oo(V_f')-$ or $\sp(V_f')-$module), 
and moreover if we let $\kk'$ vary, the corresponding
$\sl(V_f',W_f')-$modules 
(respectively, $\oo(V_f')-$ or $\sp(V_f')-$modules) form a directed
system whose direct limit 
$$\Gamma^{ann}_{\gg'}(L) = \varinjlim L^{\kk'}$$ is a $\gg'-$module. Note
that $\Gamma_{\gg'}^{ann}(L)$ may simply be defined as the union
$\bigcup_{\kk'}\,L^{\kk'}$ of subspaces $L^{\kk'} \subset L$. It is easy to
check that 
$\Gamma^{ann}_{\gg'}$ is a well-defined functor from the category
$\gg-$mod to its subcategory of $\gg'$-mod consisting of modules
satisfying the large annihilator condition. In particular,
$\Gamma_\gg^{ann}$ is a well-defined functor from $\gg-$mod to the category
of $\gg-$modules satisfying the large annihilator condition, and the
restriction of 
$\Gamma^{ann}_\gg$ to $\mathbb T_\gg$ is the identity functor.

In the case when $\gg'$ is finite dimensional the functor $\Gamma^{ann}_{\gg'}$ and its right derived functors are studied in detail in \cite{SSW}. 

\begin{lemma}\label{inv2} 

a) Let $\gg=\sl(V,W)$, then
$$\Gamma_\gg^{ann}((T^{m,n})^*) \simeq \bigoplus_{k\geq0}b_kT^{n-k,m-k}$$
where $b_k=\binom{m}{k}\binom{n}{k}k!$.

b)  Let $\gg=\oo(V)$ or $\sp(V)$, then
$$\Gamma_\gg^{ann}((T^m)^*) \simeq \bigoplus_{k\geq0}c_kT^{m-2k}$$
where $c_k=\binom{m}{2k}k!$.
\end{lemma}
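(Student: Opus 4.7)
\emph{Strategy.} My plan is to construct, for both a) and b), an explicit $\gg$-equivariant contraction map realising each $T^{n-k,m-k}$ (resp.\ $T^{m-2k}$) on the right as a submodule of $\Gamma^{ann}_\gg((T^{m,n})^*)$ (resp.\ $\Gamma^{ann}_\gg((T^m)^*)$), and then to verify surjectivity by computing the $\kk'$-invariants of the target directly via Lemma \ref{inv1} and passing to the direct limit as $\kk'$ runs over finite corank subalgebras.

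\emph{Construction and landing for a).} Fix $k\in\{0,\dots,\min(m,n)\}$, subsets $A\subset[m]$, $B\subset[n]$ of size $k$, and a bijection $\pi\colon A\to B$. Define $\psi_{A,B,\pi}\colon T^{n-k,m-k}\to (T^{m,n})^*$ by
\[
\psi_{A,B,\pi}(v_\bullet\otimes w_\bullet)(v'_\bullet\otimes w'_\bullet)=\prod_{i\in A}(v'_i,w'_{\pi(i)})\cdot\prod_{i\notin A}(v'_i,w_{\tau(i)})\cdot\prod_{j\notin B}(v_{\sigma(j)},w'_j),
\]
where $\tau\colon[m]\setminus A\to[m-k]$, $\sigma\colon[n]\setminus B\to[n-k]$ are the order-preserving bijections. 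Viewed as a bilinear form on $T^{n-k,m-k}\otimes T^{m,n}$, this is a product of $\gg$-invariant pairings $V\otimes W\to\CC$, so $\psi_{A,B,\pi}$ is $\gg$-equivariant (indeed it is the currying of one of the standard invariants $\tau_{\pi'}$ from Lemma \ref{inv1}\,a) on the balanced tensor $T^{m+n-k,m+n-k}$). For any concrete $y=v_1\otimes\cdots\otimes v_{n-k}\otimes w_1\otimes\cdots\otimes w_{m-k}$ pick a finite-dimensional non-degenerate pair $V_f\subset V$, $W_f\subset W$ containing the supports of all the $v_i$ and $w_j$; then $\psi_{A,B,\pi}(y)$ is annihilated by $\kk':=\sl(W_f^\perp,V_f^\perp)$, because any element of $\kk'$ acts by a pairing with a vector of $V_f$ or $W_f$, which is zero by the perp condition. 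Hence the image lies in $\Gamma^{ann}_\gg((T^{m,n})^*)$.

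\emph{Bijectivity for a) and the proof of b).} For the opposite direction, fix such a $\kk'$ and decompose $V=V_f\oplus W_f^\perp$, $W=W_f\oplus V_f^\perp$. This yields a $\kk'$-module decomposition of $T^{m,n}$ indexed by pairs $(A,B)$, where $A\subset[m]$, $B\subset[n]$ record which factors of $V^{\otimes m}$, $W^{\otimes n}$ lie in the perps; the $(A,B)$-summand is (finite-dimensional trivial)$\otimes(W_f^\perp)^{\otimes|A|}\otimes(V_f^\perp)^{\otimes|B|}$. Lemma \ref{inv1}\,a) applied to the non-trivial factor shows $((T^{m,n})^*)^{\kk'}$ vanishes on this summand unless $|A|=|B|=:k$, in which case its contribution equals $V_f^{*\otimes(m-k)}\otimes W_f^{*\otimes(n-k)}\otimes\langle\tau_\pi:\pi\in\mathrm{Bij}(A,B)\rangle$. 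The non-degenerate pairing gives $V_f^*\cong W_f$, $W_f^*\cong V_f$, which identifies this contribution with precisely the finite-dimensional slice $V_f^{\otimes(n-k)}\otimes W_f^{\otimes(m-k)}\subset T^{n-k,m-k}$ reached by the corresponding $\psi_{A,B,\pi}$. Taking the direct limit over $V_f,W_f$ exhausts each $T^{n-k,m-k}$ and yields the claim, with $b_k=\binom{m}{k}\binom{n}{k}k!$ counting triples $(A,B,\pi)$. Part b) runs identically with $\kk':=\oo(V_f^\perp)$ (resp.\ $\sp(V_f^\perp)$) for a non-degenerate finite-dimensional $V_f\subset V$, the decomposition $V=V_f\oplus V_f^\perp$, the identification $V_f^*\cong V_f$ coming from the form, and Lemma \ref{inv1}\,b) contributing $k!$ invariants $\sigma_\pi$ on each summand with $|A|=2k$; only even $|A|$ contribute, yielding $c_k=\binom{m}{2k}k!$.

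\emph{Main obstacle.} The substantive step is the surjectivity identification: matching, on each $(A,B)$-summand of the $\kk'$-decomposition of $T^{m,n}$, the $\kk'$-invariant functionals with the image of a single $\psi_{A,B,\pi}$ (resp.\ $\psi_{A,\pi}$). This requires careful combinatorial bookkeeping of orderings and signs, and, crucially, checking that the identifications $V_f^*\cong W_f$ (or $V_f^*\cong V_f$) are compatible as $V_f,W_f$ grow, so that the direct limit produces the stated direct sum cleanly rather than a quotient.
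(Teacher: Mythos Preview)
Your proof is correct and follows essentially the same approach as the paper: both arguments decompose $T^{m,n}$ via the splitting $V=V_f\oplus W_f^\perp$, $W=W_f\oplus V_f^\perp$, apply Lemma~\ref{inv1} to the finite-corank subalgebra $\kk'=\sl(W_f^\perp,V_f^\perp)$ to compute $((T^{m,n})^*)^{\kk'}$, and then pass to the direct limit. The only difference is cosmetic: you construct the isomorphism explicitly through the maps $\psi_{A,B,\pi}$ and verify it matches the invariant computation, whereas the paper simply writes down the $\kk$-module decomposition of $(T^{m,n})^*$ and reads off the invariants---your ``main obstacle'' about compatibility of the identifications $V_f^*\cong W_f$ under enlargement is routine (the pairing is fixed and the identifications are canonical), and the paper glosses over it in the same way.
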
 

\begin{proof} We prove a) and leave b) to the reader.
Choose a finite-dimensional non-degenerate pair $V_f \subset V, W_f \subset W$, and let $\kk=\sl(W_f^\perp,V_f^\perp)$. There is an isomorphism of $\kk$-modules 

\begin{equation}\label{eq:24.10}
 (T^{m,n})^*=(V^{\otimes m}\otimes W^{\otimes n})^*\simeq \bigoplus_{k\geq0,l\geq0}d_{k,l}(W_f^{\otimes m-k}\otimes V_f^{\otimes n-l})\otimes ((V_f^\perp)^{\otimes k}\otimes (W_f^\perp)^{\otimes l})^*
\end{equation}
where $d_{k,l}=\binom{m}{k}\binom{n}{l}$.

Using (\ref{eq:24.10}) and Lemma \ref{inv1} a) applied to $\kk$ in place of $\gg$, we compute that
$$\left((T^{m,n})^*\right)^\kk\simeq  \bigoplus_{k\geq0}b_{k}(W_f^{\otimes m-k}\otimes V_f^{\otimes n-k}).$$
Now the statement follows by taking the direct limit of $\kk-$invariants over all non-degenerate finite-dimensional pairs $V_f\subset V, W_f\subset W$.
\end{proof}

\begin{corollary}\label{injective} $T^{m,n}$ is an injective object of $\TT_{\sl(V,W)}$, and $T^m$ is an injective object of $\TT_\gg$ for $\gg = \oo(V),\sp(V)$.
\end{corollary}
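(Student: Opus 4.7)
The plan is to derive injectivity of $T^{m,n}$ in $\TT_{\sl(V,W)}$ (and of $T^m$ in $\TT_{\oo(V)},\TT_{\sp(V)}$) by exhibiting each as a direct summand of a larger module that I will show is injective in $\TT_\gg$, by transferring Theorem \ref{int1} across the calculation of Lemma \ref{inv2}. First I would verify that $T^{m,n}$ lies in $\TT_{\sl(V,W)}$: it is integrable as a direct limit of finite-dimensional $\sl(V_f,W_f)$-modules; any pure tensor is killed by $\sl(W_f^\perp, V_f^\perp)$ once $V_f,W_f$ contain its factors, so the large annihilator condition holds; and the Schur decomposition $T^{m,n}=\bigoplus_{|\lambda|=m,|\mu|=n}\CC_\lambda\otimes\CC_\mu\otimes V_\lambda\otimes W_\mu$ combined with Theorem \ref{soc-sl} and the ensuing analysis of composition factors of $V_\lambda\otimes W_\mu$ yields finite length. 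The same check places $T^m$ in $\TT_{\oo(V)}$ and $\TT_{\sp(V)}$.

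The key identity is that, for any $M\in\TT_\gg$ and any $\gg$-module $L$,
$$\Hom_\gg(M,L)=\Hom_\gg(M,\Gamma_\gg^{ann}(L))=\Hom_\gg(M,\Gamma_\gg(L)),$$
the first equality because the image of a large-annihilator module under a $\gg$-linear map is again annihilator-large (if $\kk$ kills $m$, then $\kk$ kills $\phi(m)$), and the second because $M$ is integrable so $\phi(M)\subseteq\Gamma_\gg(L)$. Applied with $L=(T^{n,m})^*$, the functors $\Hom_\gg(-,\Gamma_\gg^{ann}((T^{n,m})^*))$ and $\Hom_\gg(-,\Gamma_\gg((T^{n,m})^*))$ agree on $\TT_\gg$. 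Now Theorem \ref{int1}(a) asserts that $\Gamma_\gg((T^{n,m})^*)$ is injective in $\Int_\gg$, so the latter functor is exact on $\Int_\gg$, and a fortiori on the full subcategory $\TT_\gg\subseteq\Int_\gg$. Hence $\Hom_\gg(-,\Gamma_\gg^{ann}((T^{n,m})^*))$ is exact on $\TT_\gg$; combined with the fact that $\Gamma_\gg^{ann}((T^{n,m})^*)$ belongs to $\TT_\gg$ as a finite direct sum of tensor modules (Lemma \ref{inv2}(a)), this says it is injective in $\TT_\gg$.

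Finally, Lemma \ref{inv2}(a) gives the explicit decomposition $\Gamma_\gg^{ann}((T^{n,m})^*)\simeq\bigoplus_{k\geq 0}b_k T^{m-k,n-k}$ with $b_0=1$, so $T^{m,n}$ occurs as a direct summand and is therefore injective in $\TT_{\sl(V,W)}$. The orthogonal and symplectic cases are identical: Lemma \ref{inv2}(b) yields $\Gamma_\gg^{ann}((T^m)^*)\simeq\bigoplus_{k\geq 0}c_k T^{m-2k}$ with $c_0=1$, so $T^m$ is a direct summand of an object injective in $\TT_\gg$. I do not foresee a serious obstacle; the only point needing emphasis is the identification $\Hom_\gg(M,\Gamma_\gg(-))\simeq\Hom_\gg(M,\Gamma_\gg^{ann}(-))$ for $M\in\TT_\gg$, which is what lets one import injectivity from $\Int_\gg$ down to $\TT_\gg$.
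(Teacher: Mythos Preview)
Your proof is correct and follows essentially the same approach as the paper: establish that $\Gamma_\gg^{ann}((T^{n,m})^*)$ is injective in $\TT_\gg$ by transferring injectivity from $\Int_\gg$ via the adjunction-type identity $\Hom_\gg(M,Y)=\Hom_\gg(M,\Gamma_\gg^{ann}(Y))$ for $M\in\TT_\gg$, then extract $T^{m,n}$ as a direct summand using Lemma \ref{inv2}. The only minor difference is that the paper observes directly that $(T^{m,n})^*$ is already integrable (since $T^{m,n}\in\Int_\gg^{\alg}$), so $(T^{m,n})^*=\Gamma_\gg((T^{m,n})^*)$ is itself injective in $\Int_\gg$; your detour through $\Gamma_\gg$ is unnecessary but harmless.
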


\begin{proof} We consider only the case $\gg = \sl(V,W)$. Recall (Theorem \ref{int1}) that if $M$ is an integrable module such that $M^*$ is integrable, then $M^*$ is injective in $\Int_\gg$. In particular,  $(T^{m,n})^*$ is injective in $\Int_\gg$. Next, note that $\Gamma_\gg^{ann}$ is right adjoint to the inclusion functor $\TT_\gg \rightsquigarrow \Int_\gg$, i.e. for any $L\in\TT_\gg$ and any $Y \in \Int_\gg$, we have $$\Hom_{\gg} (L,Y)=\Hom_{\gg} (L,\Gamma_\gg^{ann}(Y)).$$ Hence, $\Gamma_\gg^{ann}$ transforms injectives in $\Int_\gg$ to injectives in $\TT_\gg$. This implies that $\Gamma_\gg^{ann}((T^{n,m})^*)$ is injective  in $\TT_\gg$. By Lemma \ref{inv2},  $T^{m,n}$ is a direct summand in $\Gamma_\gg^{ann}((T^{n,m})^*)$, and the statement follows.
\end{proof}

Next we impose the condition that our fixed subalgebra $\gg' \subset
\gg$ is countable dimensional. In the rest of the paper we set $\gg_c
:= \gg'$. More precisely, we choose strictly increasing chains of
finite-dimensional subspaces
$$V_1\subset V_2\subset\dots V_i \subset V_{i+1} \subset \dots,
\;\;W_1\subset W_2\subset\dots W_i \subset W_{i+1} \subset \dots$$ and
set $\gg_c = \sl(V_c,W_c)$ where $V_c := \varinjlim V_i, W_c :=\varinjlim W_i$. 
It is clear that 
$V_c \times W_c \rightarrow \CC$ is a countable-dimensional linear
system, hence $\gg_c \simeq \sl(\infty)$. 
If $\gg = \oo(V),\sp(V)$ choose a strictly increasing chain of
non-degenerate finite-dimensional subspaces 
$V_1 \subset V_2 \dots \subset V_i\subset V_{i+1} \subset \dots $ and
set 
$V_c:= \varinjlim \, V_i$, $\gg_c = \oo(V_c), \sp(V_c)$. 

By $\Phi$ we denote the restriction of $\Gamma^{ann}_{\gg_c}$ to
$\TT_\gg$. Note that 
for any $L \in \TT_\gg,\,\Phi(L)$ is a $\gg_c-$submodule of $L$.

\begin{lemma}\label{propfunctor} Let $L,L'\in\TT_\gg$.

a) $\Phi(L)$ generates $L$.

b) The homomorphism $\Phi(L,L'):\Hom_\gg(L,L')\to\Hom_{\gg_c}(\Phi(L),\Phi(L'))$
is injective.

\end{lemma}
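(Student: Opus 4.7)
Part (b) will follow immediately from part (a): if $\phi\in\Hom_\gg(L,L')$ satisfies $\phi|_{\Phi(L)}=0$, then since $\Phi(L)$ generates $L$ as a $\gg$-module by (a) and $\phi$ is $\gg$-linear, $\phi=0$. So the bulk of the proof lies in (a), which I outline for $\gg=\sl(V,W)$; the orthogonal and symplectic cases are entirely analogous.

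Fix a nonzero $l\in L$. The large annihilator condition supplies a finite-dimensional non-degenerate pair $V_0\subset V$, $W_0\subset W$ such that $\kk_0:=\sl(W_0^\perp,V_0^\perp)$ annihilates $l$. The strategy is to move $l$ by an element of a finite-dimensional algebraic group into $\Phi(L)$. First, I choose a finite-dimensional non-degenerate pair $V_f\subset V_c\cap W_0^\perp$, $W_f\subset W_c\cap V_0^\perp$ of the same dimensions as $V_0$, $W_0$. This is possible because $V_c\cap W_0^\perp$ and $W_c\cap V_0^\perp$ have codimensions at most $\dim W_0$ and $\dim V_0$ in $V_c$ and $W_c$ respectively (hence are infinite-dimensional), and a codimension count using non-degeneracy of $V_c\times W_c\to\CC$ shows the restricted pairing between them has only finite-dimensional kernel on each side, so non-degenerate finite-dimensional subpairs exist. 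Since $V_f\subset W_0^\perp$ and $W_f\subset V_0^\perp$, the cross pairings $V_0\times W_f$ and $V_f\times W_0$ vanish, and block-diagonality together with non-degeneracy of $(V_0,W_0)$ and $(V_f,W_f)$ ensures that $V_1:=V_0\oplus V_f$, $W_1:=W_0\oplus W_f$ is a non-degenerate pair.

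The finite-dimensional simple Lie algebra $\sl(V_1,W_1)\subset\gg$ acts integrably on $L$, so the submodule $M:=U(\sl(V_1,W_1))\cdot l$ is finite-dimensional and the algebraic group $\mathrm{SL}(V_1,W_1)$ acts on $M$ through operators lying in the image of $U(\sl(V_1,W_1))\to\End M$. Since $(V_0,W_0)$ and $(V_f,W_f)$ are non-degenerate subpairs of $(V_1,W_1)$ of equal dimension, there is $g\in\mathrm{SL}(V_1,W_1)$ with $g(V_0)=V_f$ and $g(W_0)=W_f$. As $g$ preserves the pairing, the annihilator of $g\cdot l$ in $\gg$ contains $g\kk_0 g^{-1}=\sl(g(W_0)^\perp,g(V_0)^\perp)\supset\sl(W_f^\perp,V_f^\perp)$, so $g\cdot l\in\Phi(L)$. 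Finally, $g\cdot l\in U(\sl(V_1,W_1))\cdot l\subset U(\gg)\cdot l$ and $l=g^{-1}\cdot(g\cdot l)\in U(\gg)\cdot(g\cdot l)\subset U(\gg)\cdot\Phi(L)$, proving (a). The main technical obstacle is the construction of the transverse pair $(V_f,W_f)$: one must simultaneously arrange the perpendicularity conditions $V_f\perp W_0$, $W_f\perp V_0$ and non-degeneracy of $V_f\times W_f\to\CC$, which is where the codimension and rank count using non-degeneracy of the ambient pairing $V_c\times W_c\to\CC$ is essential.
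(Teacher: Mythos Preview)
Your proof is correct and follows essentially the same strategy as the paper: use integrability to let a group element $g$ act on $L$ and move a vector (or generating subspace) with known large annihilator into $\Phi(L)$ by conjugating the annihilator. The paper works with a finite-dimensional generating subspace $L_f\subset L$ and invokes the direct-limit group $SL(V,W)$ to find $g$ with $g(V_f)\subset V_i$, $g(W_f)\subset W_i$, whereas you work element by element and build $g$ explicitly inside a finite-dimensional $SL(V_1,W_1)$ via your transverse pair $(V_f,W_f)\subset (V_c,W_c)$; these are minor packaging differences of the same idea.
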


\begin{proof} Again we consider only the case $\gg=\sl(V,W)$ 
since the other cases are similar. Let $SL(V,W)$ denote the direct limit group
$\varinjlim\,SL(V_f,W_f)$ for all non-degenerate finite-dimensional
pairs $V_f \subset V, W_f \subset W$, where $SL(V_f,W_f) \simeq SL(\dim\,V_f)$.

a) Since $L$ has finite length and satisfies the large annihilator
condition, there is a finite-dimensional 
non-degenerate pair $V_f\subset V,W_f\subset W$ and a
finite-dimensional $\gl(V_f,W_f)$-submodule 
$L_f \subset L$ annihilated by $\sl((W_f)^\perp,(V_f)^\perp)$ such
that $L$ is generated by $L_f$ over $\gg$. 
Choose $i$ so that $\dim V_f<\dim V_i$. Then there exists $g\in SL(V,W)$ such that
$g(V_f)\subset V_i,g(W_f)\subset W_i$. Note that $g=\exp x$ for some
$x\in \sl(V,W)$.  
By the integrability of $L$ as a $\gg-$module,
the action of $g$ is well defined on $L$, and $g(L_f)$ also generates
$L$ over $\gg$. 
On the other hand, by construction $g(L_f)$ is annihilated by $g\sl((W_f)^\perp,(V_f)^\perp)g^{-1}$.
Observe that
$$\sl((W_i)^\perp,(V_i)^\perp)\subset\sl(g(W_f)^\perp,g(V_f)^\perp)=g\sl((W_f)^\perp,(V_f)^\perp)g^{-1}.$$ 
Hence $g(L_f)\subset\Phi(L)$. The statement follows.

b) follows immediately from a).
\end{proof}

\begin{lemma}\label{tensorfunctor} 

a) $\Phi(T^{m,n})=V_c^{\otimes m}\otimes W_c^{\otimes n}$ for $\gg =
\sl(V,W)$, and 
$\Phi(T^m)=V_c^{\otimes m}$ for $\gg=\oo(V)$, $\sp(V)$;

b) The homomorphisms 
$$\Phi(T^{m,n},T^{k,l}):\Hom_\gg(T^{m,n},T^{k,l})\to \Hom_{\gg_c}(V_c^{\otimes m}\otimes W_c^{\otimes n},V_c^{\otimes k}\otimes W_c^{\otimes l})$$ 
for $\gg=\sl(V,W)$, and
$$\Phi(T^m,T^k):\Hom_\gg(T^m,T^k)\to \Hom_{\gg_c}(V_c^{\otimes k},V_c^{\otimes k})$$
for  $\gg=\oo(V)$ or $\sp(V)$, are isomorphisms.

c) Let $X\subset \bigoplus_i V_c^{\otimes m_i}\otimes W_c^{\otimes  n_i}$, 
(respectively, $X \subset \bigoplus_i V_c^{m_i}$ for $\gg = \oo(V),\sp(V)$) be a $\gg_c$-submodule. Then $\Phi(U(\gg)\cdot X)=X$.

d) If  $X\subset V_c^{\otimes m}\otimes W_c^{\otimes n}$ (respectively, $X \subset \bigoplus_i V_c^{m_i}$ for $\gg = \oo(V),\,\sp(V)$) is a simple submodule, then $U(\gg)\cdot X$ is a simple $\gg-$module.
\end{lemma}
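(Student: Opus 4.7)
Parts (a) and (b) are direct invariant-theoretic computations, (d) follows from (c), and (c) is where the real work sits. For (a), I would fix a finite-dimensional non-degenerate pair $V_f\subset V_c$, $W_f\subset W_c$ and set $\kk':=\sl(W_f^\perp,V_f^\perp)\subset\gg$. Under $\kk'$ the pairing decomposes $V=V_f\oplus W_f^\perp$ (with $V_f$ trivial and $W_f^\perp$ natural) and $W=W_f\oplus V_f^\perp$; expanding $T^{m,n}$ into the $2^{m+n}$ summands according to which slots are trivial or natural, each summand with at least one natural factor has no $\kk'$-invariants, by the $\sl(\infty)$-version of \refle{inv1} a). The surviving summand is $V_f^{\otimes m}\otimes W_f^{\otimes n}$, and taking the directed union over all $(V_f,W_f)$ in $(V_c,W_c)$ yields $V_c^{\otimes m}\otimes W_c^{\otimes n}$. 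The $\oo,\sp$ cases go the same way using \refle{inv1} b).

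For (b), injectivity is \refle{propfunctor} b), while surjectivity uses \refprop{Koszul1} (respectively \refprop{Koszul3} for $\oo,\sp$): every $\gg_c$-equivariant map between two countable tensor powers is a linear combination of compositions of contractions $\varphi_{i,j}$ with factor permutations, each of which is defined intrinsically from the pairing (or form) and lifts verbatim to a $\gg$-equivariant map whose $\Phi$-restriction recovers the original.

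For (c), the inclusion $X\subset\Phi(U(\gg)\cdot X)$ is immediate: any $x\in X$ lies in $V_f^{\otimes m}\otimes W_f^{\otimes n}$ for some finite-dimensional non-degenerate $V_f\subset V_c$, $W_f\subset W_c$, hence is killed by $\sl(W_f^\perp,V_f^\perp)\subset\gg$. For the reverse, part (a) applied to the ambient tensor power yields
$$\Phi(U(\gg)\cdot X)=(U(\gg)\cdot X)\cap\bigoplus_i V_c^{\otimes m_i}\otimes W_c^{\otimes n_i},$$
reducing the claim to $(U(\gg)\cdot X)\cap V_c^{\otimes m}\otimes W_c^{\otimes n}\subset X$ summand by summand. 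Because $\gg$ commutes with $S_m\times S_n$, one may further restrict $X$ to a single $(\lambda,\mu)$-isotypic piece inside $V_{c,\lambda}\otimes W_{c,\mu}$ (the $\gg_c$-analogue of $V_\lambda\otimes W_\mu$). I would then induct on the Loewy depth of $X$ in the socle filtration of this $\gg_c$-module, using the compatibility $\Soc^k(V_\lambda\otimes W_\mu)\cap V_{c,\lambda}\otimes W_{c,\mu}=\Soc^k(V_{c,\lambda}\otimes W_{c,\mu})$ obtained from \refth{soc-sl} applied in both settings (both sides are the same kernels of iterated contractions). The base case $X=V_{c,\lambda,\mu}$ follows from $U(\gg)\cdot V_{c,\lambda,\mu}=V_{\lambda,\mu}$ together with $V_{\lambda,\mu}\cap V_{c,\lambda}\otimes W_{c,\mu}=V_{c,\lambda,\mu}$. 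The inductive step is the main obstacle, since a general $X$ need not split along the isotypic pieces of the semisimple Loewy quotients $\Soc^k/\Soc^{k-1}$ and so one must track carefully how pieces of $X$ in the higher Loewy strata contribute back to $V_c^{\otimes m}\otimes W_c^{\otimes n}$ after applying elements of $U(\gg)\setminus U(\gg_c)$.

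For (d), a simple $\gg_c$-submodule $X\subset V_c^{\otimes m}\otimes W_c^{\otimes n}$ lies in the $\gg_c$-socle $\bigoplus_{(\lambda,\mu)}\CC_\lambda\otimes\CC_\mu\otimes V_{c,\lambda,\mu}$, so $X=\CC v\otimes V_{c,\lambda,\mu}$ for some line $\CC v\subset\CC_\lambda\otimes\CC_\mu$ and unique $(\lambda,\mu)$. Since $S_m\times S_n$ commutes with $\gg$, $U(\gg)\cdot X=\CC v\otimes V_{\lambda,\mu}$, which is simple as a $\gg$-module.
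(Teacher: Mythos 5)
Parts a) and b) of your argument are correct and essentially the paper's (one small slip: in a) the vanishing of $\kk'$-invariants in the summands with a natural factor is Lemma \ref{inv} applied to $\kk'$, not Lemma \ref{inv1}, which computes invariants in the \emph{dual} of a tensor power). The genuine gap is in c): you reduce to isotypic pieces and set up an induction on the Loewy depth of $X$ via Theorem \ref{soc-sl}, but you yourself flag that the inductive step is ``the main obstacle'' and do not carry it out, so c) is not proved as written. The idea that makes c) short is to use injectivity instead of the socle filtration: since tensor powers are injective objects of $\TT_{\gg_c}$ (Corollary \ref{injective} applied to $\gg_c$; equivalently Theorem \ref{kos2}), the quotient of $\bigoplus_i V_c^{\otimes m_i}\otimes W_c^{\otimes n_i}$ by $X$ embeds into a direct sum of tensor powers, hence $X=\Ker\,\alpha$ for some $\gg_c$-morphism $\alpha$ between direct sums of tensor powers. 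By your part b), $\alpha=\Phi(\beta)$ for a $\gg$-morphism $\beta$ between the corresponding modules $\bigoplus_i T^{m_i,n_i}$. Since $X\subset\Ker\,\beta$ and $\Ker\,\beta$ is $\gg$-stable, $U(\gg)\cdot X\subset\Ker\,\beta$, so $\Phi(U(\gg)\cdot X)\subset\Ker\,\alpha=X$; combined with the obvious inclusion $X\subset\Phi(U(\gg)\cdot X)$ this finishes c) with no case analysis and no induction.

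Concerning d): your direct argument in the $\sl$ case (a simple $\gg_c$-submodule of the socle is $\CC v\otimes V_{c,\lambda,\mu}$, and $U(\gg)\cdot V_{c,\lambda,\mu}=V_{\lambda,\mu}$ is simple) is fine, but it does not cover the orthogonal and symplectic cases, where $X$ is allowed to lie in a direct sum $\bigoplus_i V_c^{\otimes m_i}$ and the socle structure involves the modules $V_{\lambda,\gg}$ rather than the full $V_\lambda$. The paper's route is uniform and derives d) from c): if $0\to L\to U(\gg)\cdot X\to L'\to 0$ with $L,L'\neq 0$, then applying the exact functor $\Phi$ and c) gives $0\to\Phi(L)\to X\to\Phi(L')\to 0$ with $\Phi(L),\Phi(L')\neq 0$ by Lemma \ref{propfunctor} a), contradicting the simplicity of $X$. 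I recommend repairing c) as above and then taking this shorter path to d).
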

\begin{proof} a) follows easily from the observation that
$$(T^{m,n})^{\kk}=V_i^{\otimes m}\otimes W_i^{\otimes n}$$ for any finite corank subalgebra $\kk=\sl(W_i^\perp,V_i^\perp)$. This observation is a straightforward consequence of Lemma \ref{inv}.

To prove b), note that the injectivity of the homomorphisms $\Phi(T^{m,n},T^{k,l})$ follows from a) and Lemma \ref{propfunctor} b). To prove surjectivity, we observe that
$\Hom_{\gg_c}(V_c^{\otimes m}\otimes W_c^{\otimes n},V_c^{\otimes k}\otimes W_c^{\otimes l})$ is generated by permutations and contractions according to Proposition \ref{Koszul3} b). Both are defined in $\Hom_\gg(T^{m,n},T^{k,l})$ by the same formulae. Therefore the homomorphisms $\Phi(T^{m,n}, T^{k,l})$ are surjective.  

We now prove c). Note that $X=\Ker \alpha$ for some 
$\alpha\in\Hom_{\gg_c}( \bigoplus_i V_c^{\otimes m_i}\otimes W_c^{\otimes n_i}, \bigoplus_j V_c^{\otimes m_j}\otimes W_c^{\otimes n_j})$.
Using b) we have $U(\gg)\cdot X\subset\Ker \Phi^{-1}(\alpha)$. Hence, $\Phi(U(\gg)\cdot X)\subset\Ker\alpha=X$. Since the inclusion $X\subset\Phi(U(\gg)\cdot X)$ is obvious,
the statement follows.

To prove d), suppose $U(\gg)\cdot X$ is not simple, i.e. there is an exact sequence
$$0\to L\to U(\gg)\cdot X\to L'\to 0$$
for some non-zero $L,L'$. By the exactness of $\Phi$ and by c), we have an exact sequence
$$0\to \Phi(L)\to X\to \Phi(L')\to 0.$$
By Lemma \ref{propfunctor} a), $\Phi(L)$ and $\Phi(L')$ are both non-zero. This contradicts the assumption that $X$ is simple.
\end{proof}

\vspace{0.33cm}
\begin{lemma}\label{tensor} For $\gg = \sl(V,W)$ (respectively, for  $\gg = \oo(V), \sp(V)$) 
any simple object in the category $\TT_\gg$ is isomorphic to a
submodule in $T^{m,n}$ for suitable $m$ and $n$ 
(respectively, in $T^m$ for a suitable $m$).
\end{lemma}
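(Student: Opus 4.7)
The plan is to reduce to the countable-dimensional case via the functor $\Phi = \Gamma^{\mathrm{ann}}_{\gg_c}$ introduced earlier and then invoke Theorem \ref{kos1}. I sketch the argument for $\gg = \sl(V,W)$; the orthogonal and symplectic cases are analogous.

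Given a simple $L \in \TT_\gg$, by Lemma \ref{propfunctor}(a) the $\gg_c$-submodule $\Phi(L) \subset L$ is nonzero and generates $L$ over $U(\gg)$. My first task is to locate inside $\Phi(L)$ a simple $\gg_c$-submodule $X$ that embeds into some $V_c^{\otimes m}\otimes W_c^{\otimes n} = \Phi(T^{m,n})$. I pick a nonzero $v\in\Phi(L)$ and form the cyclic $\gg_c$-module $Y = U(\gg_c)\cdot v$. Using the large annihilator condition on $L$ over $\gg$, which supplies a finite-corank subalgebra $\sl(W_f^\perp,V_f^\perp)\subset\gg$ annihilating $v$, and enlarging the pair $V_f,W_f$ so that it fits within the chain $V_i\subset V_c$, $W_i\subset W_c$ defining $\gg_c$, one checks that $Y$ is integrable and satisfies the large annihilator condition over $\gg_c$. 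Combined with a finite length argument, this places $Y$ inside $\TT_{\gg_c}$, so by Theorem \ref{kos1} the socle of $Y$ contains a simple tensor $\gg_c$-module $X \cong V_{\lambda,\mu}$; by Theorem \ref{kos2} applied to $\gg_c$, this $X$ embeds into $V_c^{\otimes |\lambda|}\otimes W_c^{\otimes |\mu|}$.

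Now, viewing $X$ as a $\gg_c$-submodule of $T^{m,n}$ with $m = |\lambda|$ and $n = |\mu|$, Lemma \ref{tensorfunctor}(d) produces a simple $\gg$-submodule $L'' := U(\gg)\cdot X \subset T^{m,n}$ with $\Phi(L'') = X$ by Lemma \ref{tensorfunctor}(c). To finish I would identify $L \cong L''$: both are simple $\gg$-modules containing the same simple $\gg_c$-module $X$, both are generated over $U(\gg)$ by this copy of $X$, and hence both arise as simple quotients of the cyclic $\gg$-module $U(\gg)\otimes_{U(\gg_c)} X$. Combined with the injectivity of $\Phi$ on $\Hom$ spaces from Lemma \ref{propfunctor}(b) and the matching of $\gg_c$-socles, this yields $L \cong L'' \hookrightarrow T^{m,n}$.

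The principal obstacle is the verification that $Y$ belongs to $\TT_{\gg_c}$: a finite-corank subalgebra of $\gg$ does not restrict to one of $\gg_c$ on the nose, so translating the large annihilator condition from $\gg$ to $\gg_c$ requires careful enlargements within the chain defining $\gg_c$, and finite length over $\gg_c$ must be separately controlled. A secondary difficulty is the final identification $L \cong L''$: this amounts to a form of faithfulness for $\Phi$ on simple objects, stronger than what Lemma \ref{propfunctor}(b) supplies by itself, and requires exploiting the simplicity of both $L$ and $L''$ together with their common $\gg_c$-socle $X$.
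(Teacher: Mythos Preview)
Your approach is quite different from the paper's and, as written, contains a circular dependency. The paper argues directly via parabolic induction: from a nonzero vector in $L^{\sl(W_i^\perp,V_i^\perp)}$ it uses the $\ZZ$-grading $\gg=\gg^{-1}\oplus\gg^0\oplus\gg^1$ (with $\gg^0=\gl(V_i,W_i)\oplus\sl(W_i^\perp,V_i^\perp)$) and integrability to produce a simple $\gg^0$-submodule of $L$ annihilated by $\gg^1$; then $L$ is realised as the unique integrable quotient of the corresponding parabolically induced module, and Frobenius reciprocity identifies that quotient with a submodule of $T^{m,n}$. No appeal to the countable-dimensional classification or to properties of $\Phi$ beyond Lemma \ref{propfunctor}(a) is made.

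The decisive gap in your route is the final identification $L\cong L''$. You invoke Lemma \ref{propfunctor}(b), but that gives only \emph{injectivity} of $\Phi$ on $\Hom$-spaces; to pass from the $\gg_c$-map $X=\Phi(L'')\hookrightarrow\Phi(L)$ to a nonzero $\gg$-map $L''\to L$ one needs \emph{fullness} of $\Phi$, which is Lemma \ref{functor} --- and that lemma is proved via Corollary \ref{corr:1}(b), which in turn rests on the present Lemma \ref{tensor}. There is no shortcut here: for $g=v\otimes w\in\gg$ with $w\in W\setminus W_c$ and $x\in X\subset L$, the element $g\cdot x$ lies outside $\Phi(L)$ and is not determined by the $\gg_c$-module structure of $X$ together with the large annihilator condition, so the two simple $\gg$-modules $U(\gg)\cdot X$ (one inside $L$, one inside $T^{m,n}$) need not coincide a priori. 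Your first obstacle (placing $Y$ in $\TT_{\gg_c}$, in particular controlling its length) is also genuine, and the ``finite length argument'' you gesture at would itself require something like the parabolic step the paper uses.
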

\begin{proof} We assume that $\gg = \sl(V,W)$ and leave the other cases to the reader. 
Let $L$ be a simple module in $\TT_\gg$. By Lemma \ref{propfunctor}
a), 
$\Phi(L)\neq 0$. Let $L_i = L^{\sl(W_i^{\perp},V_i^{\perp})}\neq0$ for
some $i$, and let $L'\subset L_i$ be a simple $\sl(V_i,W_i)$-submodule. Consider the $\ZZ$-grading
$\gg=\gg^{-1}\oplus\gg^0\oplus\gg^1$ where 
$\gg^0=\gl(V_i,W_i)\oplus \sl(W_i^\perp,V_i^\perp)$, $\gg^1=V_i\otimes V_i^\perp$,  
$\gg^{-1}=W_i^\perp\otimes W_i$. There exists a finite-dimensional
subspace $W'\subset V_i^\perp$, such that 
$S(V_i\otimes W')$ generates $S(\gg^1)$ as a module over
$\sl(W_i^\perp,V_i^\perp)$. By the integrability of $L$, 
$(V_i\otimes W')^q\cdot L'=0$ for sufficiently large $q\in \ZZ_{\geq  0}$, and thus 
$(\gg^1)^q\cdot L'=0$. Hence, there is a non-zero vector $l\in L_i
\subset L$ annihilated by $\gg^1$, and consequently 
there is a simple $\gg^0$-submodule $L^{''} \subset L$ annihilated by
$\gg^1$. 
Therefore $L$ is isomorphic to a quotient of the parabolically induced module 
$U(\gg)\otimes_{U(\gg^0\oplus\gg^1)}L^{''}$. The latter module is a direct limit of 
parabolically induced modules for finite-dimensional subalgebras of
$\gg$. 
Hence it has a unique integrable quotient, and this quotient is
isomorphic to $L$. 
On the other hand, $L^{''}$ is a simple $\gg_0$-submodule of $T^{m,n}$
for some $m$ and $n$. 
Thus, by Frobenius reciprocity, a quotient of $U(\gg)\otimes_{U(\gg^0\oplus \gg^1)}L"$ is isomorphic to a submodule of $T^{m,n}$. Since $T^{m,n}$ is integrable, this quotient is isomorphic to $L$. 
\end{proof}

\vspace{0.5cm}
\begin{corollary}\label{corr:1}
 \begin{enumerate}[a)]
  \item If $\gg = \sl(V,W)$, then $\mathcal A_\sl =    \bigoplus_{m,n,q}\,\Hom\,_{\gg}(T^{m,n},T^{m-q,n-q})$. 
If $\gg = \oo(V), \sp(V)$, then 
$\mathcal A_\gg = \bigoplus_{m,q}\,\Hom\,(T^m, T^{m-2q})$. Furthermore,
$$\mathcal A_{\sl}=\varinjlim\operatorname{End}_{\gg}(\bigoplus_{m+n\leq r}T^{m,n}),$$
and for $\gg = \oo(V), \sp(V)$
$$\mathcal A_{\oo}=\varinjlim\operatorname{End}_{\gg}(\bigoplus_{m\leq r}T^{m}).$$ 
  \item Up to isomorphism, the objects of $\TT_\gg$ are precisely all
    finite length submodules of $T(V,W)^{\oplus k}$ 
for $\gg = \sl(V,W)$, and of $T(V)^{\oplus k}$ 
for $\gg = \oo(V), \sp(V)$. Equivalently, up to isomorphism, the
objects of 
$\TT_\gg$ are the finite length subquotients of $T(V,W)^{\oplus k}$ for $\gg = \sl(V,W)$, 
and of $T(V)^{\oplus k}$ for $\gg = \oo(V), \sp(V)$.
 \end{enumerate}
\end{corollary}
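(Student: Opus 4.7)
The plan for part a) is to transfer the countable-dimensional result to the general setting via the functor $\Phi$. I would first observe that the permutations in $\CC[S_m\times S_n]$ and the contractions $\varphi_{i,j}$ are intrinsically defined on $T^{m,n}$ and commute with restriction to $\gg_c$-submodules, so $\Phi$ carries each generator of $\mathcal A_\sl$ acting on $T(V,W)$ to the corresponding generator for $\gg_c$ acting on $T(V_c,W_c)$. By Lemma \ref{tensorfunctor} b), $\Phi(T^{m,n},T^{k,l})$ is an isomorphism onto $\Hom_{\gg_c}(V_c^{\otimes m}\otimes W_c^{\otimes n},V_c^{\otimes k}\otimes W_c^{\otimes l})$, and by Proposition \ref{Koszul1} a) the latter is spanned by monomials in permutations and contractions---in particular it vanishes unless $m-k=n-l=q$ for some $q\geq 0$. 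Transporting back gives $\Hom_\gg(T^{m,n},T^{k,l})\subset \mathcal A_\sl$ with the claimed bigrading, and the direct limit identification $\mathcal A_\sl=\varinjlim\End_\gg(\bigoplus_{m+n\leq r}T^{m,n})$ is then a restatement of the definition of $\mathcal A_\sl^{(r)}$. The $\oo,\sp$ cases are identical using Proposition \ref{Koszul3}.

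For part b), I would run an injective hull argument. Corollary \ref{injective} provides injectivity of $T^{m,n}$ (respectively $T^m$) in $\TT_\gg$, so every direct summand is injective as well. The Schur decomposition exhibits $V_\lambda\otimes W_\mu$ as a direct summand of $T^{|\lambda|,|\mu|}$, and by Theorem \ref{soc-sl} its socle is the simple tensor module $V_{\lambda,\mu}$; hence $V_\lambda\otimes W_\mu$ is an injective hull of $V_{\lambda,\mu}$ in $\TT_\gg$, with an analogous statement for $V_{\lambda,\gg}$ in the $\oo,\sp$ cases. Now for $L\in\TT_\gg$ of finite length, Lemma \ref{tensor} identifies every simple summand of $\Soc(L)$ as some $V_{\lambda_i,\mu_i}$; the finite length hypothesis ensures $\Soc(L)$ is a finite direct sum, and its injective hull $I=\bigoplus_i V_{\lambda_i}\otimes W_{\mu_i}$ is a submodule of $T(V,W)^{\oplus k}$ for suitable $k$. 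Since $\Soc(L)\subset L$ is essential, injectivity of $I$ extends the inclusion $\Soc(L)\hookrightarrow I$ to an injection $L\hookrightarrow I\subset T(V,W)^{\oplus k}$.

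For the converse, any finite length subquotient of $T(V,W)^{\oplus k}$ has composition factors among the $V_{\lambda,\mu}$ by Theorem \ref{soc-sl}, inherits the large annihilator condition from the ambient module, and hence lies in $\TT_\gg$; by the previous paragraph every such module also embeds as a submodule of some $T(V,W)^{\oplus k'}$, yielding the equivalence of the ``submodule'' and ``subquotient'' formulations. The main obstacle I anticipate is the identification of the injective hull of $V_{\lambda,\mu}$ in the general (non-countable) case as $V_\lambda\otimes W_\mu$, which rests squarely on Corollary \ref{injective}; once that is in hand, the rest of the argument is a standard socle-hull-essentiality chase.
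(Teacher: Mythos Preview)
Your proposal is correct and follows essentially the same route as the paper: part a) is deduced from Lemma~\ref{tensorfunctor} (together with the countable-dimensional Propositions~\ref{Koszul1} and~\ref{Koszul3}), and part b) from Lemma~\ref{tensor} and Corollary~\ref{injective} via the standard socle-essentiality-injectivity argument you outline. Your explicit identification of $V_\lambda\otimes W_\mu$ as the injective hull of $V_{\lambda,\mu}$ is slightly more than is needed---Lemma~\ref{tensor} already embeds each simple directly into some $T^{m,n}$, and injectivity of the latter suffices---but it is correct and harmless.
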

\begin{proof} Claim a) is a consequence of Lemma \ref{tensorfunctor}. 
Claim b) follows from Lemma \ref{tensor} and Corollary \ref{injective}.
\end{proof}

\begin{lemma}\label{functor} For any $L\in \TT_\gg$, $\Phi(L)\in  \TT_{\gg_c}$. Moreover,
the functor $\Phi: \TT_\gg\to\TT_{\gg_c}$ is fully faithful and essentially surjective.
\end{lemma}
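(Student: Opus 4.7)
The plan is to handle the three assertions separately, using Lemmas \ref{propfunctor} and \ref{tensorfunctor} together with the embedding of objects of $\TT_\gg$ into direct sums of tensor modules from Corollary \ref{corr:1} b). A preliminary step is to verify that $\Phi$ is exact on $\TT_\gg$. Given $0 \to L' \to L \to L'' \to 0$ in $\TT_\gg$ and $l'' \in \Phi(L'')$ annihilated by some $\kk_i$, any lift $l \in L$ has a finite-corank annihilator by the large annihilator condition on $L$; enlarging to $\kk_j \subset \kk_i$ with $\kk_j \cdot l = 0$ places $l \in \Phi(L)$, so $\Phi$ is right-exact, the left-exactness of invariants being automatic.

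For $\Phi(L) \in \TT_{\gg_c}$, integrability over $\gg_c$ is automatic, and the large annihilator condition over $\gg_c$ follows because $\kk_i \cap \gg_c$ is finite-corank in $\gg_c$. Finite length I would prove by induction on the length of $L$ using the exactness of $\Phi$, reducing to the simple case. For simple $L$, Lemma \ref{tensor} embeds $L \hookrightarrow T^{m,n}$, hence $\Phi(L) \hookrightarrow V_c^{\otimes m} \otimes W_c^{\otimes n}$. A proper nonzero $\gg_c$-submodule $X \subsetneq \Phi(L)$ would satisfy $U(\gg) \cdot X = L$ by simplicity of $L$, and then Lemma \ref{tensorfunctor} c) would yield $\Phi(L) = \Phi(U(\gg) \cdot X) = X$, a contradiction. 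Thus $\Phi(L)$ is simple whenever $L$ is.

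Faithfulness is immediate from Lemma \ref{propfunctor} b). For fullness, given $f \colon \Phi(L) \to \Phi(L')$, I would form the graph $\Gamma_f \subset \Phi(L) \oplus \Phi(L') = \Phi(L \oplus L')$ and set $Y := U(\gg) \cdot \Gamma_f \subset L \oplus L'$. Embedding $L \oplus L'$ into a direct sum of tensor modules via Corollary \ref{corr:1} b) makes Lemma \ref{tensorfunctor} c) applicable to $\Gamma_f$, yielding $\Phi(Y) = \Gamma_f$. The projection $p_1 \colon Y \to L$ is surjective because $\Phi(L) \subset p_1(Y)$ generates $L$ by Lemma \ref{propfunctor} a); its kernel lies in $0 \oplus L'$, and $\Phi(\ker p_1) \subset \Gamma_f \cap (0 \oplus \Phi(L')) = 0$, so Lemma \ref{propfunctor} a) forces $\ker p_1 = 0$. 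Then $\tilde f := p_2 \circ p_1^{-1}$ is a $\gg$-equivariant lift of $f$.

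For essential surjectivity, let $M \in \TT_{\gg_c}$. Corollary \ref{corr:1} b) applied to $\gg_c$ embeds $M \hookrightarrow T(V_c, W_c)^{\oplus k}$, and since $M$ is finitely generated, its image lies in a finite subsum $\bigoplus_i V_c^{\otimes m_i} \otimes W_c^{\otimes n_i}$. Setting $L := U(\gg) \cdot M$ inside the corresponding finite sum $\bigoplus_i T^{m_i,n_i}$ of $\gg$-tensor modules places $L$ in $\TT_\gg$, since each $T^{m_i,n_i}$ has finite Loewy length by Theorem \ref{soc-sl} while the large annihilator and integrability conditions are inherited; Lemma \ref{tensorfunctor} c) then gives $\Phi(L) = M$. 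I expect the fullness step to be the main obstacle, as it requires a delicate graph construction that simultaneously invokes Lemma \ref{tensorfunctor} c) in its direct-sum form, the generation property of $\Phi(L)$ from Lemma \ref{propfunctor} a), and the embedding of $L \oplus L'$ into a single tensor module furnished by Corollary \ref{corr:1} b).
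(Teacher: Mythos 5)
Your overall architecture is sound, and your fullness argument is a correct alternative to the paper's: you build the graph $\Gamma_f \subset \Phi(L)\oplus\Phi(L') = \Phi(L\oplus L')$, apply Lemma \ref{tensorfunctor} c) to $Y=U(\gg)\cdot\Gamma_f$, and use the generation property of Lemma \ref{propfunctor} a) to see that $p_1$ is an isomorphism; the paper instead passes to injective hulls $I(L), I(L')$ and quotes the surjectivity of $\Phi(I(L),I(L'))$ furnished by Corollary \ref{corr:1} a). Your essential surjectivity step agrees in substance with the paper's (the paper realizes $M$ as a kernel of a map of tensor modules and lifts the map; you generate $U(\gg)\cdot M$ inside $\bigoplus_i T^{m_i,n_i}$ -- both work).

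The genuine problem is your ``preliminary step'' proving right-exactness of $\Phi$. The large annihilator condition on $L$ only guarantees that a lift $l$ of $l''\in (L'')^{\kk_i}$ is annihilated by $\sl(W_f^{\perp},V_f^{\perp})$ for \emph{some} finite-dimensional non-degenerate pair $V_f\subset V$, $W_f\subset W$; these subspaces need not lie in $V_c$, $W_c$, so no $\kk_j$ from the fixed chain defining $\gg_c$ need be contained in the annihilator of $l$, and $l$ need not lie in $\Phi(L)=\bigcup_j L^{\kk_j}$ at all. When $V$ is uncountable-dimensional this failure is generic, so ``enlarging to $\kk_j\subset\kk_i$ with $\kk_j\cdot l=0$'' is not available. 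Exactness of $\Phi$ is in fact true, but the correct argument uses the tools you deploy elsewhere: the image $X$ of $\Phi(L)$ in $\Phi(L'')$ satisfies $U(\gg)\cdot X=L''$, and after embedding $L''$ into a finite sum of tensor modules via Corollary \ref{corr:1} b), Lemma \ref{tensorfunctor} c) yields $X=\Phi(U(\gg)\cdot X)=\Phi(L'')$. Fortunately the damage is localized: you only invoke exactness in the length induction, where left-exactness (automatic, since $\Phi$ is a direct limit of invariants functors) already bounds the length of $\Phi(L)$ by the sum of the lengths of $\Phi(L')$ and $\Phi(L'')$; alternatively, the paper avoids the induction altogether by noting that $L\hookrightarrow\bigoplus_i T^{m_i,n_i}$ forces $\Phi(L)\hookrightarrow\bigoplus_i V_c^{\otimes m_i}\otimes W_c^{\otimes n_i}$, which has finite length.
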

\begin{proof} By Corollary \ref{corr:1} b), $L$ is isomorphic to
is a submodule in a direct sum of finitely many copies of
$T(V,W)$. Then $\Phi(L)$ is isomorphic to a submodule in  a direct sum of finitely
many copies of $T(V_c,W_c)$. That implies the first assertion. The
fact that $\Phi$ is faithful follows from Lemma \ref{propfunctor}
b). 

To prove that $\Phi$ is full, consider $L,L'\in  \TT_\gg$ and let
$I(L),I(L')$ denote respective injective hulls in $\TT_\gg$. Then 
$$\operatorname{Hom}_\gg(L,L')\subset\operatorname{Hom}_\gg(I(L),I(L'))$$ and 
$$\operatorname{Hom}_{\gg_c}(\Phi(L),\Phi(L'))\subset\operatorname{Hom}_{\gg_c}(\Phi(I(L)),\Phi(I(L'))).$$ By
Corollary \ref{corr:1} a), the homomorphism
$$\Phi(I(L),I(L')):\operatorname{Hom}_\gg(I(L),I(L'))\to\operatorname{Hom}_{\gg_c}(\Phi(I(L)),\Phi(I(L')))$$ 
is surjective. Therefore for any $\varphi\in \operatorname{Hom}_{\gg_c}(\Phi(L),\Phi(L'))$ there exists 
$\psi\in \operatorname{Hom}_\gg(I(L),I(L'))$ such that $\psi(\Phi(L))\subset \Phi(L')$. By Lemma \ref{propfunctor}  
$\Phi(L)$ and $\Phi(L')$ generate respectively
$L$ and $L'$. Hence $\psi(L)\subset L'$. Thus, we obtain that
the homomorphism 
$$\Phi(L,L'):\operatorname{Hom}_\gg(L,L')\to\operatorname{Hom}_{\gg_c}(\Phi(L),\Phi(L'))$$ is also surjective. 

To prove that $\Phi$ is essentially surjective, we use again
Corollary \ref{corr:1} b). We note that any $L\in  \TT_\gg$ is
isomorphic to the kernel of $\varphi\in\operatorname{Hom}(T(V,W)^{\oplus k},(T(V,W)^{\oplus l})$
for some $k$ and $l$ and then apply Corollary \ref{corr:1} a).
\end{proof}

Observe that Lemma \ref{functor} implies that $$\Phi:\TT_\gg \rightarrow \TT_{\gg_c}$$ an equivalence of the abelian
categories $\TT_\gg$ and $\TT_{\gg_c}$.
To prove Theorem \ref{equivmain} it remains 
to check that $\Phi$ is an equivalence of monoidal categories. We therefore prove the following.

\begin{lemma}\label{tensphi} If $L,N\in\TT_{\gg}$, then $\Phi(L\otimes N)\simeq \Phi(L)\otimes \Phi(N)$.
\end{lemma}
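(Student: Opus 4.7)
The plan is to construct a natural $\gg_c$-module map $\mu_{L,N}: \Phi(L) \otimes \Phi(N) \to \Phi(L \otimes N)$, verify it is an isomorphism on tensor modules, and propagate the isomorphism to all of $\TT_\gg \times \TT_\gg$ using exactness of both sides. The natural map is obtained by bilinearly extending $l \otimes n \mapsto l \otimes n$: if $l \in L^{\kk_i}$ and $n \in N^{\kk_j}$, where $\kk_i = \sl(W_i^\perp, V_i^\perp)$ (respectively $\oo((V_i)^\perp)$ or $\sp((V_i)^\perp)$) runs through the cofinal family defining $\Phi = \Gamma^{ann}_{\gg_c}$, then $l \otimes n$ is annihilated by $\kk_{\max(i,j)}$, so lies in $\Phi(L \otimes N)$. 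Injectivity of $\mu_{L,N}$ is automatic, since the tensor product of vector-space inclusions is injective over $\CC$.

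For the base case, I take $L = T^{m,n}$ and $N = T^{m',n'}$ (respectively $L = T^m$ and $N = T^{m'}$). The canonical reshuffling of tensor factors gives a $\gg$-isomorphism $T^{m,n} \otimes T^{m',n'} \simeq T^{m+m',\, n+n'}$ (respectively $T^m \otimes T^{m'} \simeq T^{m+m'}$). By Lemma \ref{tensorfunctor}~a), the same reshuffling identifies $\Phi(T^{m,n}) \otimes \Phi(T^{m',n'}) = V_c^{\otimes m} \otimes W_c^{\otimes n} \otimes V_c^{\otimes m'} \otimes W_c^{\otimes n'}$ with $\Phi(T^{m+m',\,n+n'}) = V_c^{\otimes (m+m')} \otimes W_c^{\otimes (n+n')}$. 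The map $\mu$ realizes precisely this identification, so is an isomorphism.

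To extend the conclusion to arbitrary $L, N \in \TT_\gg$, I appeal to exactness. By Lemma \ref{functor}, $\Phi: \TT_\gg \to \TT_{\gg_c}$ is an equivalence of abelian categories and hence exact. Consequently, both bifunctors $F(L,N) := \Phi(L \otimes N)$ and $G(L,N) := \Phi(L) \otimes \Phi(N)$ are exact in each variable ($\otimes_\CC$ being exact). By Corollary \ref{corr:1}~b), every $L \in \TT_\gg$ fits in an exact sequence $0 \to L \to I^0 \to I^1$, where $I^0, I^1$ are \emph{finite} direct sums of tensor modules: embed $L$ into $I^0$ (finite by finite length of $L$), then embed the cokernel into $I^1$ and compose with the projection. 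Applying $F(-,N)$ and $G(-,N)$ produces a commutative diagram with exact rows; by the five lemma, proving that $\mu_{L,N}$ is an isomorphism reduces to proving the same for $\mu_{I^0,N}$ and $\mu_{I^1,N}$, which by additivity reduces to $\mu_{T^{m,n},N}$ (respectively $\mu_{T^m,N}$). Repeating the reduction in the second variable brings us to $\mu_{T^{m,n}, T^{m',n'}}$, which is the base case.

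The main obstacle is justifying the exactness of $\Phi$. While it follows formally from Lemma \ref{functor}, a direct proof requires showing right exactness: given a surjection $B \twoheadrightarrow C$ in $\TT_\gg$ and $c \in C^{\kk_j}$, one must produce a preimage $b \in B^{\kk_{j'}}$ for some $j' \geq j$. This is essentially a Lie algebra cohomology vanishing statement for the infinite-dimensional subalgebras $\kk_j$; it can be controlled by combining the large annihilator condition, which makes the relevant modules locally finite over finite-dimensional reductive subalgebras of $\kk_j$ where complete reducibility applies, with a direct limit argument to assemble the local lifts.
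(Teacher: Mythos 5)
Your proof is correct, but it takes a different route from the paper's. The paper works at the level of individual finite-corank subalgebras: it proves the identity $(L\otimes N)^{\kk}=L^{\kk}\otimes N^{\kk}$ for $\kk=\sl(W_f^\perp,V_f^\perp)$ by first checking it on injectives (via $(T^{m,n})^{\kk}=V_f^{\otimes m}\otimes W_f^{\otimes n}$, from Lemma \ref{inv}) and then, for arbitrary $L\hookrightarrow I$, $N\hookrightarrow J$, invoking the linear-algebra intersection formula $(L\otimes N)\cap(I^{\kk}\otimes J^{\kk})=L^{\kk}\otimes N^{\kk}$; the lemma follows by passing to the direct limit over $\kk=\sl(W_i^\perp,V_i^\perp)$. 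You instead build the natural comparison map $\mu_{L,N}$, verify it on $T^{m,n}\otimes T^{m',n'}$ via Lemma \ref{tensorfunctor} a), and propagate by a homological d\'evissage: an injective copresentation $0\to L\to I^0\to I^1$ in each variable plus the five lemma on kernels. Both arguments reduce to the same base case and both use that every object of $\TT_\gg$ embeds into a finite direct sum of modules $T^{m,n}$; the paper's intersection trick gets away with a single embedding $L\hookrightarrow I$ and no homological machinery, while your version is more formal and would transport verbatim to any setting where the comparison map, left exactness, and enough injectives of the right form are available. One remark: your final paragraph treats right exactness of $\Phi$ as the main obstacle, but it is not needed anywhere in your argument --- the five-lemma step only uses left exactness of $\Phi$ (clear, since $\Gamma^{ann}_{\gg_c}$ is a direct limit of invariants functors), and in any case full exactness is already available from Lemma \ref{functor}, which precedes this lemma in the paper; the sketched cohomology-vanishing argument can simply be deleted.
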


\begin{proof} We just consider the case $\sl(V,W)$ as the  orthogonal and symplectic cases are very similar.
Let $\kk=\sl(W_f^\perp,V_f^\perp)$ for some finite-dimensional
non-degenerate pair 
$V_f \subset V ,W_f \subset W$. We claim that 
$$(L\otimes N)^{\kk}=L^\kk\otimes N^\kk.$$
Indeed, using Lemma \ref{inv} one can easily show that 
$$(T^{m,n})^{\kk}=V_f^{\otimes m}\otimes W_f^{\otimes n},$$
which implies the statement in the case when $L$ and $N$ are injective.
For arbitrary $L$ and $N$ consider embeddings $L\hookrightarrow I$ and $N\hookrightarrow J$
for some injective $I,J\in\TT_\gg$. Then
$$(L\otimes N)^\kk=(L\otimes N)\cap (I\otimes J)^\kk=(L\otimes N)\cap (I^\kk\otimes J^\kk)=L^\kk\otimes N^\kk.$$

Now we set $\kk=\sl(W_i^{\perp},V_i^{\perp})$ and finish the proof by passing to the direct limit.
\end{proof}

\vspace{0.5cm}

The proof of Theorem \ref{equivmain} is complete. $\square$

\section{Mackey Lie algebras}\label{sect:6}
Let $V \times W \rightarrow \CC$ be a linear system.
Then each of $V$ and $W$ can be considered as subspace of 
the dual of the other:
\begin{eqnarray}
 V \subset W^*, \;\;W \subset V^*. \nonumber
\end{eqnarray}
Let $\End_W(V)$ denote the algebra of endomorphisms $\phi: V \rightarrow V$ such that $\phi^*(W) \subset W$ where $\phi^*: V^* \rightarrow V^*$ is the dual 
endomorphism. Clearly, there is a canonical anti-isomorphism of algebras
\begin{equation}\label{rel:canonic}
 \End_W(V) \stackrel{\sim}{\rightarrow} \End_V(W), \,\,\, \phi \longmapsto \phi^*_{|W}.
\end{equation}
We call the Lie algebra associated with the associative algebra $\End_W(V)$ (or equivalently  $\End_V(W)$) a \textit{Mackey Lie algebra} and denote it by 
$\gl^M(V,W)$.

Note that if $V,\,W$ is a linear system, then for any subspaces $W' \subset V^*$ with $W \subset W',$ and $V' \subset W^*$ with $V \subset V'$, the pairs $V,\,W'$ 
and $V',\,W$ are linear systems. In particular $V,\,V^*$ is a linear system and $W^*,\,W$ is a linear system. Clearly, $\gl^M(V,V^*)$ coincides with the Lie 
algebra of all endomorphisms of $V$ (respectively, $\gl^M(W^*,W)$ is the Lie algebra of all endomorphisms of $W$). Hence 
$\gl^M(V,W) \subset \gl^M(V,V^*)$, $\gl^M(V,W) \subset \gl^M(W^*,W)$.
If $V$ and $W = V_*$ are countable dimensional, the Lie algebra $\gl^M(V,V_*)$ is identified 
with the Lie algebra of all matrices $X=(x_{ij})_{i\geq 1, j\geq 1}$ such that each row and each column of $X$ have finitely many non-zero entries. 
The Mackey Lie algebra  $\gl^M(V,V^*)$
(for a countable dimensional space $V$) is identified with the Lie algebra of all matrices $X=(x_{ij})_{i\geq 1, j\geq 1}$ each column of which has finitely 
many non-zero entries. Alternatively, 
if a basis of $V$ as above is enumerated by $\ZZ$ (i.e we consider a basis $\lbrace v_j\rbrace_{j \in \ZZ}$ 
such that $V_* = \span \lbrace v^*_j \rbrace_{j \in \ZZ}$ 
where $v^*_j(v_i) = 0$ for $j \neq i$, $v_j^*(v_j)=1$), then  $\gl^M(V,V_*)$ is identified with the Lie algebra of all matrices $(x_{ij})_{i,j \in \ZZ}$ whose rows and 
columns have finitely many non-zero entries, and  $\gl^M(V,V^*)$ is identified with the Lie algebra of all matrices $(x_{ij})_{i,j\in \ZZ}$ whose columns have finitely 
many non-zero entries.

Obviously $V$ and $W$ are  $\gl^M(V,W)$ - modules. Moreover, $V$ and $W$ are not isomorphic as  $\gl^M(V,W)$ - modules.

It is easy to see that $\gl(V,W) = V \otimes W$ is the subalgebra of $\gl^M(V,W)$ consisting of operators with finite-diemnsional images in both $V$ and $W$, and that it is an ideal in  $\gl^M(V,W)$. Furthemore, the Lie algebra  $\gl^M(V,W)$ has a 1-dimensional center consisting of 
the scalar operators $\CC \id$.

We now introduce the orthogonal and symplectic Mackey Lie algebras. Let $V$ be a vector space endowed with a non-degenerate symmetric 
(respectively, antisymmetric) form, then $\oo^M(V)$ (respectively, $\sp^M(V)$) is the Lie algebra
\begin{equation}\label{rel:Liealg}
 \lbrace X \in \End(V)|\,(X\cdot v,w)+(v,X\cdot w)=0 \,\,\, \forall \, v,w \in V\rbrace.
\end{equation}
If $V$ is countable dimensional, there always is a basis $\lbrace v_i,w_j\rbrace_{i,j \in \ZZ}$ of $V$ such that $\span\lbrace v_i\rbrace_{i \in \ZZ}$ and $\span\lbrace w_j\rbrace_{j \in \ZZ}$ are isotropic spaces and $(v_i, w_j) = 0$ for $i \neq j$, $(v_i, w_i)=1$. The corresponding matrix form of  $\oo^M(V)$ consists of all matrices
\begin{equation}\label{matrix}
\left(
\begin{array}{c|c}
 a_{ij} & b_{kl} \\ \hline
 c_{rs} & -a_{ji} 
\end{array}
\right) 
\end{equation}
each row and column of which are finite and in addition $b_{kl}=-b_{lk},\,c_{rs}=-c_{sr}$ where $i,j,k,l,r,s \in \ZZ$. The matrix form for $\sp^M(V)$ is similar: 
here $b_{kl} = b_{lk},\,c_{rs}=c_{sr}$.

It is clear that $\oo(V)\subset \oo^M(V)$ and
$\sp(V)\subset\sp^M(V)$:
\begin{equation}\label{eq:v_wedge_x}
 (v\wedge w)\cdot x = (v,x)w - (x,w)v \;\;\mathrm{for}\; v\wedge w \in \Lambda^2V = \oo(V),\;x\in V
\end{equation}
and
\begin{equation}
 (vw)\cdot x = (v,x)w - (x,w)v \;\;\mathrm{for}\; vw \in S^2V = \sp(V),\;x \in V.
\end{equation}

Moreover,  $\oo(V)$ is an ideal in $\oo^M(V)$ and $\sp(V)$ is an ideal in $\sp^M(V)$, since both $\Lambda^2 V$ and $S^2V$ consist of the respective operators with finite-dimensional image in $V$.

In this way, we have the following exact sequences of Lie algebras
\begin{equation}\label{seq:Lalg1}
 0 \rightarrow \gl(V,W) \rightarrow \gl^M(V,W) \rightarrow \gl^M(V,W) /\gl(V,W)  \rightarrow 0,  \\
\end{equation}
\begin{equation}\label{seq:Lalg2}
0 \rightarrow \oo(V) \rightarrow \oo^M(V) \rightarrow \oo^M(V) / \oo(V) \rightarrow 0,  \\ 
\end{equation}
\begin{equation}\label{seq:Lalg3}
0 \rightarrow \sp(V) \rightarrow \sp^M(V) \rightarrow \sp^M(V) / \sp(V) \rightarrow 0. 
\end{equation}

\begin{lemma}\label{lem_nz} $\sl(V,W)$ (respectively, $\oo(V), \sp(V)$) is the unique simple ideal in  $\gl^M(V,W)$ (respectively, $\oo^M(V), \sp^M(V)$). 
\end{lemma}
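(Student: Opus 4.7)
The assertion packages two claims: $\sl(V,W)$ (respectively $\oo(V), \sp(V)$) is a simple ideal of $\gl^M(V,W)$ (respectively $\oo^M(V), \sp^M(V)$), and any simple ideal must coincide with it. The first claim is essentially immediate from what has been established. Simplicity of the three Lie algebras is proved in \refsec{sect:1}. That $\gl(V,W)$ is an ideal of $\gl^M(V,W)$, and likewise $\oo(V) \subset \oo^M(V)$, $\sp(V) \subset \sp^M(V)$, is noted right before the statement of the lemma. To sharpen $\gl(V,W)$ to $\sl(V,W)$ one verifies $\tr[Z,Y]=0$ for $Z\in\gl^M(V,W)$ and $Y\in\gl(V,W)$, which is straightforward by writing $Y=\sum v_i\otimes w_i$ and using the identity $w_i(Zv_i)=(Z^*w_i)(v_i)$.

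For uniqueness, let $\gg$ denote $\sl(V,W), \oo(V), \sp(V)$ and let $\gg^M$ denote the corresponding Mackey Lie algebra. Suppose $I\subset\gg^M$ is a simple ideal. Since both $\gg$ and $I$ are ideals of $\gg^M$, one has $[\gg,I]\subseteq \gg\cap I$. The plan is to split into two cases: either $\gg\cap I\neq 0$, or $[\gg,I]=0$, i.e.\ $I\subseteq C_{\gg^M}(\gg)$. In the first case, $\gg\cap I$ is a nonzero ideal of the simple Lie algebra $\gg$, hence equals $\gg$, so $\gg\subset I$; then $\gg$ is a nonzero ideal of the simple Lie algebra $I$, forcing $I=\gg$.

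The task reduces to ruling out the second case, i.e.\ to computing $C_{\gg^M}(\gg)$. Any $X$ in this centralizer is an endomorphism of $V$ commuting with the $\gg$-action. For a sufficiently large finite-dimensional non-degenerate pair $(V_f,W_f)$ (respectively a non-degenerate $V_f$), $V$ decomposes as a $\sl(V_f,W_f)$-module (respectively $\oo(V_f)$- or $\sp(V_f)$-module) as the natural module $V_f$ plus a trivial complement $V_f^{\perp}$; these isotypic pieces are non-isomorphic, so $X$ preserves both, and finite-dimensional Schur yields $X|_{V_f}=\lambda_f\cdot\id$. Coherence under the inclusions $V_f\subset V_g$ (passed through an enlargement containing both) forces $\lambda_f$ to be a single constant $\lambda$, whence $X=\lambda\cdot\id_V$. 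In the orthogonal or symplectic case, the invariance condition $(Xv,w)+(v,Xw)=0$ becomes $2\lambda(v,w)=0$, so $\lambda=0$ and $X=0$; hence $I=0$, a contradiction. In the $\gl^M(V,W)$ case one obtains $I\subseteq\CC\id$, contradicting simplicity since a simple Lie algebra is non-abelian.

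The main obstacle is the Schur-type identification of $C_{\gg^M}(\gg)$: because $V$ may have uncountable dimension, Dixmier's lemma is unavailable and one has to bootstrap from finite-dimensional Schur through the directed system of finite-dimensional non-degenerate pairs, exploiting the clean isotypic decomposition $V=V_f\oplus V_f^{\perp}$ under each finite-dimensional simple subalgebra. Everything else is then formal manipulation with ideals.
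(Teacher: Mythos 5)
Your proof is correct, but it takes a genuinely different route from the paper's. The paper establishes the sharper fact that \emph{every} nonzero ideal $I\neq\CC\id$ of $\gl^M(V,W)$ contains $\sl(V,W)$: given $X\in I$ with $X\neq c\,\id$, one picks $v\in V$, $w\in W$ with $X\cdot v$ not proportional to $v$ and $X^*\cdot w$ not proportional to $w$, so that $[X,v\otimes w]=(X\cdot v)\otimes w-v\otimes(X^*\cdot w)$ is a nonzero element of $\gl(V,W)\cap I$; since every nonzero ideal of $\gl(V,W)$ contains $\sl(V,W)$, this gives $\sl(V,W)\subset I$, and uniqueness of the simple ideal follows at once. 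You instead split on whether $\gg\cap I$ vanishes, handle the nonvanishing case formally, and in the remaining case compute $C_{\gg^M}(\gg)$ via Schur's lemma bootstrapped through the directed system of finite-dimensional non-degenerate pairs, concluding $C_{\gl^M(V,W)}(\sl(V,W))=\CC\id$ and $C_{\oo^M(V)}(\oo(V))=C_{\sp^M(V)}(\sp(V))=0$. Both arguments are sound. The paper's single bracket against a rank-one operator is shorter and yields the stronger ``every non-central ideal contains $\gg$'' statement, which is the pattern reused in the proof of Theorem \ref{maintheo}; your version isolates the centralizer computation, a clean fact in its own right, at the cost of the isotypic-decomposition argument. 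One small notational slip: for $\gg=\sl(V,W)$ the trivial complement of $V_f$ in $V$ is $W_f^\perp\subset V$, not $V_f^\perp$ (which is a subspace of $W$); in the orthogonal and symplectic cases $V_f^\perp$ is of course correct. Also note that your first part (that $\sl(V,W)$ is itself an ideal of $\gl^M(V,W)$, via $\tr[Z,Y]=0$) is a point the paper leaves implicit, so including it does no harm.
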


\begin{proof}
 We will prove that if $I\neq\CC\id$ is a non-zero ideal in $\gl^M(V,W)$, then $I$ contains  $\sl(V,W)$. Indeed, assume that $X\in I$ and $X\neq c\id$.
Then one can find $v\in V$ and $w\in W$ such that $X\cdot v$ is not proportional to $v$ and $X^*\cdot w$ is not proportional to $w$.
Hence, $Z=[X,v \otimes w]=(X\cdot v)\otimes w- v \otimes (X\cdot w)\in \gl(V,W) \cap I$ and $Z\neq 0$. Since $\sl(V,W)$ is the 
unique simple ideal in $\gl(V,W)$ and $\gl(V,W) \cap I\neq 0,$  we conclude that $\sl(V,W)\subset I$.

The two other cases are similar and we leave them to the reader.
\end{proof}

\begin{corollary}\label{corr:14_10}
a) Two Lie algebras $\gl^M(V,W)$ and $\gl^M(V',W')$ are isomorphic if and only if the linear systems $V\times W\rightarrow \CC$ and 
$V' \times W' \rightarrow \CC$ are isomorphic.

b) Two Lie algebras $\oo^M(V)$ and $\oo^M(V')$ (respectively, $\sp^M(V)$ and $\sp^M(V')$) are isomorphic if and only if there is an isomorphism of vector spaces $V\simeq V'$ transferring the form defining $\oo^M(V)$ (respectively $\sp^M(V)$) into the form defining $\oo^M(V')$ (respectively, $\sp^M(V')$).
\end{corollary}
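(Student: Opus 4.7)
The plan is to reduce Corollary \ref{corr:14_10} to Proposition \ref{prop:1.29.9} by using Lemma \ref{lem_nz} to canonically recover the finitary Lie algebras $\sl(V,W)$, $\oo(V)$, $\sp(V)$ from the Mackey Lie algebras in which they sit.

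The ``if'' direction is essentially a matter of functoriality. For a), an isomorphism of the linear systems $V\times W\to\CC$ and $V'\times W'\to\CC$ is a pair of vector space isomorphisms $\phi\colon V\to V'$, $\psi\colon W\to W'$ with $\langle \phi(v),\psi(w)\rangle' = \langle v,w\rangle$. For such a pair, the dual of $\phi$ identifies $W'\subset (V')^*$ with $W\subset V^*$ via $\psi$, and conjugation $X\mapsto \phi\circ X\circ\phi^{-1}$ therefore carries $\End_W(V)$ bijectively onto $\End_{W'}(V')$, yielding a Lie algebra isomorphism $\gl^M(V,W)\simeq\gl^M(V',W')$. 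An analogous conjugation argument handles b): an isomorphism $V\simeq V'$ transferring one bilinear form into the other clearly conjugates the Lie algebra of form-preserving endomorphisms of $V$ onto that of $V'$.

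For the ``only if'' direction of a), suppose $\Psi\colon \gl^M(V,W)\stackrel{\sim}{\to}\gl^M(V',W')$ is a Lie algebra isomorphism. By Lemma \ref{lem_nz}, $\sl(V,W)$ and $\sl(V',W')$ are the unique simple ideals of the source and target respectively. Since being a simple ideal is preserved under any Lie algebra isomorphism, $\Psi$ must restrict to an isomorphism $\sl(V,W)\simeq\sl(V',W')$. Proposition \ref{prop:1.29.9}a) then forces the linear systems to be isomorphic. Part b) is handled identically: Lemma \ref{lem_nz} identifies $\oo(V)$ (respectively $\sp(V)$) as the unique simple ideal of $\oo^M(V)$ (respectively $\sp^M(V)$), any isomorphism of Mackey Lie algebras restricts to an isomorphism of finitary Lie algebras, and Proposition \ref{prop:1.29.9}b) supplies the desired isomorphism of vector spaces intertwining the forms.

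No substantive obstacle arises, since the heavy lifting has already been done in Lemma \ref{lem_nz} (existence and uniqueness of the simple ideal) and Proposition \ref{prop:1.29.9} (reconstruction of the linear system or form from the finitary Lie algebra). The only sanity check worth noting is that the Mackey construction is manifestly functorial in the input datum, which is what makes the ``if'' direction automatic.
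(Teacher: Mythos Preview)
Your proof is correct and follows exactly the paper's approach: the paper's proof simply reads ``The statement follows from Proposition \ref{prop:1.29.9} and Lemma \ref{lem_nz},'' and you have spelled out precisely how these two ingredients combine.
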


\begin{proof}
 The statement follows from Proposition \ref{prop:1.29.9} and Lemma \ref{lem_nz}.
\end{proof}

\vspace{0.33cm}

The following is our main result about the structure of Mackey Lie algebras.

\begin{theo} \label{maintheo} Let $V$ be a countable-dimensional vector space. 

a) $\gl(V,V_*) \oplus \CC \id$ is an ideal in $\gl^M(V,V_*)$ and the 
quotient $$\gl^M(V,V_*)/\left(\gl(V,V_*)\oplus \CC \id\right)$$ 
is a simple Lie algebra.

b)  $\gl(V, V^*) \oplus \CC \id$ is an ideal in $\End(V)$ and the quotient
$\End(V)/\left(\gl(V, V^*) \oplus \CC \id\right)$ 
is a simple Lie algebra. 

c) If $V$ is equipped with a non-degenerate symmetric (respectively, antisymmetric) bilinear form, then $\oo^M(V)/\oo(V)$ (respectively $\sp^M(V)/\sp(V)$) 
is a simple Lie algebra.
\end{theo}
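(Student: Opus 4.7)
The plan is to prove each of the simplicity statements by showing that any Lie ideal $I$ of the Mackey Lie algebra that strictly contains the corresponding ``finite-rank plus scalar'' subalgebra must coincide with the whole algebra. I concentrate on part (a); parts (b) and (c) follow the same scheme with notational modifications. Fix a basis $\{v_i\}_{i\geq 1}$ of $V$ with dual basis $\{v_i^*\}_{i\geq 1}$ of $V_*$, identifying $\gl^M(V,V_*)$ with the Lie algebra of infinite matrices having finitely many nonzero entries in each row and each column. Write $E_{ij}=v_i\otimes v_j^*\in\gl(V,V_*)$ for the matrix units and $S$ for the shift $S(v_i)=v_{i+1}$, which manifestly belongs to $\gl^M(V,V_*)$.

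The proof is driven by three explicit commutator identities inside $\gl^M(V,V_*)$ that together reduce simplicity to the production of a single ``good'' element inside $I$. First, if $D=\sum d_iE_{ii}$ is a diagonal operator with $d_i-d_j\neq 0$ for all but finitely many pairs $i\neq j$, then every off-diagonal $Y=\sum y_{ij}E_{ij}\in\gl^M(V,V_*)$ is expressible as $Y=[D,Z]$ where $Z\in\gl^M(V,V_*)$ is defined by $z_{ij}=y_{ij}/(d_i-d_j)$; the row/column finiteness of $Z$ follows from that of $Y$. Second, every diagonal $D'=\sum d_i'E_{ii}\in\gl^M(V,V_*)$ is expressible as $D'=[S,B]$ with the super-diagonal $B=-\sum_{j\geq 1}\bigl(\sum_{i=1}^j d_i'\bigr)E_{j,j+1}\in\gl^M(V,V_*)$. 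Third, from a diagonal $D\in I$ with $d_{i+1}\neq d_i$ for cofinitely many $i$, the shift $S$ itself is recovered as an iterated commutator $S=[D',[D,S]]$ for the diagonal $D'\in\gl^M(V,V_*)$ determined by $d_{i+1}'-d_i'=1/(d_{i+1}-d_i)$ (no membership in $I$ is required of $D'$, since $[D,S]\in I$ forces $[D',[D,S]]\in I$). Chaining these identities, the presence in $I$ of such a diagonal $D$ forces $S\in I$, whence all diagonals lie in $I$, whence all off-diagonals lie in $I$, so $I=\gl^M(V,V_*)$.

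The main technical task is then to produce a suitable diagonal $D\in I$ from an arbitrary $X\in I\setminus(\gl(V,V_*)\oplus\CC\id)$. I would argue by a case split. If $X$ is diagonal modulo finite rank and its diagonal takes infinitely many distinct values, a finite-rank correction (in $\gl(V,V_*)\subset I$) together with a permutation in $\gl^M(V,V_*)$ yields $D$ directly. If $X$ is diagonal modulo finite rank but with only finitely many distinct values (each of infinite multiplicity), one first extracts an infinite-rank projection $P\in I$ corresponding to an eigenspace decomposition; bijective matrices in $\gl^M(V,V_*)$ between the eigenspace and its complement, together with commutators with $P$, then manufacture shift-like elements inside the eigenspaces, and a further application of identity two yields a diagonal with distinct eigenvalues. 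Finally, when $X$ is not even diagonalizable modulo finite rank, a cyclic vector for $X$ provides a shift-like block, which combines with the finite-rank ideal to yield the desired diagonal via the same manipulations. Parts (b) and (c) proceed by the same template: (b) drops the row-finiteness condition, which only simplifies the verifications, while (c) replaces $E_{ij}$ by $v_i\wedge w_j\in\oo(V)$ or $v_iw_j\in\sp(V)$ using the matrix form (\ref{matrix}), uses a form-preserving analogue of $S$ acting as a shift on a maximal isotropic subspace and as its negative transpose on a complementary isotropic subspace, and requires no scalar center since $\id$ does not preserve the form. The main obstacle is the normal-form reduction producing $D$, especially the few-eigenvalues subcase, where a delicate combinatorial construction is needed to break a two-level diagonal into one with more distinct eigenvalues while maintaining row/column-finite supports.
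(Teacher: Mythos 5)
Your generation step (the three commutator identities) is sound and is essentially the paper's Lemma \ref{lem_diag} in different packaging: once $I$ contains a diagonal matrix with suitably distinct entries, $I=\gl^M(V,V_*)$ follows. Recovering the shift $S$ from $[D,S]$ is a nice variant, though note that it requires $d_{i+1}\neq d_i$ for \emph{all} $i$ (achievable by a basis permutation whenever $D\notin\gl(V,V_*)\oplus\CC\id$), not merely cofinitely many; otherwise you only recover $S$ modulo $\gl(V,V_*)$. This is harmless because, for simplicity of the \emph{quotient}, you are indeed entitled to assume $\gl(V,V_*)\oplus\CC\id\subset I$, as ideals of the quotient correspond to ideals of $\gl^M(V,V_*)$ containing $\gl(V,V_*)\oplus\CC\id$.

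The genuine gap is in the reduction of an arbitrary $X\in I\setminus(\gl(V,V_*)\oplus\CC\id)$ to a diagonal element of $I$. Your trichotomy is organized around diagonalizability modulo finite rank, and the third case is not actually handled: a cyclic vector need not exist (for $X=\sum_{m}E_{2m-1,2m}$, an infinite sum of nilpotent $2\times 2$ blocks, one has $X^{2}=0$, so no vector $v$ has $v,X\cdot v,X^{2}\cdot v,\dots$ independent, yet $X\notin\gl(V,V_*)\oplus\CC\id$ and $X$ is not diagonal modulo finite rank in the given basis); and even when such a $v$ exists, the auxiliary operators the construction needs (a projector onto $\mathrm{span}\{X^{i}\cdot v\}$ and the map $X^{i}\cdot v\mapsto iX^{i-1}\cdot v$) are not obviously row- and column-finite, so they need not lie in $\gl^M(V,V_*)$. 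The paper avoids diagonalizability altogether via a structural fact (Lemma \ref{lem_4}): every row- and column-finite matrix is supported on a staircase of finite blocks $[i_k,i_{k+2}-1]$. Choosing $D$ constant on consecutive blocks gives $X=X_{-1}+X_0+X_1$ with $[D,X_j]=jX_j$, each summand automatically in $I$; the block-diagonal piece is converted into a diagonal matrix by a blockwise finite-dimensional triple commutator (Lemma \ref{lem_diag2}, Corollary \ref{cor_1}), and $X_{\pm1}$ become block-diagonal after one further commutator with an element $\sum_{s}E_{j_si_s}$. You need this, or an equivalent device, to make your case analysis exhaustive. Relatedly, your remark that part b) ``only simplifies the verifications'' has it backwards: the staircase decomposition \emph{fails} in $\End(V)$ (rows may be infinite), which is why the paper gives a separate argument there --- and that separate argument is precisely the cyclic-vector/projector one, legitimate in $\End(V)$ because arbitrary projectors are available, but not in $\gl^M(V,V_*)$. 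Since part c) is reduced (via the grading with $\gg^M_0\simeq\gl^M(V,V_*)$) to part a), it inherits the same gap.
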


\begin{proof} The proof is subdivided into lemmas and corollaries. 

Note that $\gl(V,V_*)\subset\gl(V,V^*)\subset \gl^M(V,V^*)=\End(V)$.
In what follows we fix a basis $\{v_i\}_{i\geq 1}$ in $V$ and use the respective identification of $\gl(V,V_*)$, $\gl^M(V,V_*)$ and  
$\gl^M(V,V^*)=\End(V)$
with infinite matrices.
By $E_{ij}$ we denote the elementary matrix whose only non-zero entry is $1$ at position $i,j$.

\begin{lemma}\label{lem_diag} Let $\gg^M =\gl^M(V,V_*)$,  $\End(V)$. Assume that an ideal $I \subset \gg^M$ contains a diagonal matrix $D \notin \gl(V,V_*) \oplus \CC \id$. Then $I = \gg^M$.
\end{lemma}

\begin{proof}
We first assume that $D=\sum_{i\geq 1} d_i E_{ii}$ satisfies $d_i \neq d_j$ for all $i \neq j$.
Then $[D,\gg^M] = \gg^M_0$, where $\gg^M_0$ is the space of all matrices in $\gg^M$ with zeroes on the diagonal. Consequently, $\gg^M_0 \subset I$. 
Furthermore, any diagonal matrix $\sum_{i} s_i E_{ii}$ can be written as the commutator
\begin{equation}
 \left[\sum_{i \geq 1} E_{i\,i+1}, \sum_{j \geq1} t_j E_{j+1\,j}\right] \nonumber
\end{equation}
with $t_j = \sum_{i=1}^{j} s_i$.
Hence, $I=\gg^M$.

We now consider the case of an arbitrary $D \in I$. After permuting the basis elements of $V$, we can assume that 
$D = \sum_{i\geq 1} d_i E_{ii}$ with $d_{2m-1} \neq 0$ and  $d_{2m-1} \neq d_{2m}$ for all $m > 0$. Let 
\begin{equation}\label{eq:lem_diag}
 X := \sum_{m=1}^{\infty} \frac{1}{d_{2m}-d_{2m-1}}E_{2m\,2m-1}, \,\,\, Y:=\sum_{m=1}^{\infty} s_m E_{2m-1\,2m}, \nonumber
\end{equation}
where $s_m\neq \pm s_l$ for $m \neq l$. Then $[Y,[X,D]]=s_1E_{11} - s_1E_{22} + s_2E_{33} -s_2E_{44} +\dots \in I$, and we reduce this case to the previous one.
\end{proof}

\begin{lemma}\label{lem_diag2} 
Let $y = (y_{ij})\in \gl(n)$ be a non-scalar matrix. There exist $u,v,w \in \gl(n)$ such that $[u,[v,[w,y]]$ is a diagonal matrix.
\end{lemma}
\begin{proof}
 If $y$ is not diagonal, pick $i \neq j$ such that $y_{ij} \neq 0$. Set $w=E_{ii}, v=E_{jj}, u=E_{ji}$. If y is diagonal, pick $i \neq j$ such that $y_{ii} \neq y_{jj}$ and set $w=E_{ij}, v=E_{ii}, u=E_{ji}$.
\end{proof}

\begin{corollary}\label{cor_1} Let $\prod_{i} \gl(n_i)$ for $n_i \geq 2$ be a block subalgebra of $\gg^M$. 
Suppose that X $\in \left(\prod_{i} \gl(n_i)\right) \cap I$ for some ideal $I \subset \gg^M$ and that $X \notin \gl(V,V_*) \oplus \CC \mathrm{Id}$. Then $I = \gg^M$.
\end{corollary}
\begin{proof}
 Let $X = \prod_i X_i,$ where $X_i \in \gl(n_i)$. Without loss of generality we may assume that infinitely many $X_i$ are not diagonal, as otherwise $X$ is diagonal modulo $\gl(V,V_*)$ and the result follows from Lemma \ref{lem_diag}. Now pick $u_i,v_i,w_i \in \oj_i$ as in the previous lemma. Set $u = \prod_i u_i, v=\prod_i v_i, w=\prod_i w_i$. Then $Z=[u,[v,[w,X]]$ is diagonal. By normalizing $u_i$ we can ensure that $Z \notin \CC \id$. Since $Z \in I$, the statement follows from Lemma \ref{lem_diag}.
\end{proof}

\begin{lemma}\label{lem_4} 
For any $X = (x_{ij})_{i\geq1,j\geq1}\in \gl^M(V,V_*)$ there exists an increasing sequence $\lbrace i_1<i_2<\dots\rbrace$ such that 
$x_{ij}=0$ unless $i,j \in \left[i_k,i_{k+2}-1\right]$ for some $k$. 
\end{lemma}

\begin{proof} Set $i_1 =1$,
 \begin{equation}
  i_2 = \max\lbrace j\,|\, x_{1j}\neq0\; \mbox{or}\; x_{j1}\neq0\rbrace+1, \nonumber 
 \end{equation}
and construct the sequence recursively by setting 
\begin{equation}
 i_k = \max \lbrace j > i_{k-1}\,|\, x_{ij} \neq0\; \mbox{or}\; x_{ji} \neq0 \;\mbox{for some}\; i_{k-2} \leq i < i_{k-1}\rbrace +1.\nonumber
\end{equation}
\end{proof}

We are now ready to prove Theorem \ref{maintheo} a).

\begin{corollary}\label{cor_2}(Theorem \ref{maintheo} a))  
Let an ideal $I$ of $ \gl(V,V_*)$ be not contained in $\gl (V,V_*) \oplus \CC \id$. Then $I=\gl^M(V,V_*)$.
\end{corollary}

\begin{proof}
 Let $X \in I \backslash \lbrace\ojl(V,V_*) \oplus \CC \mathrm{Id}\rbrace$. Pick $i_1 < i_2 < \dots$ as in Lemma \ref{lem_4} and set 
\begin{equation}
 D = \mathrm{diag} (\underbrace{1, \dots, 1}_{i_2-1}, \underbrace{2, \dots ,2}_{i_3-i_2}, \underbrace{3, \dots, 3}_{i_4-i_3}, \dots). \nonumber
\end{equation}
 Then $X=X_{-1}+X_0+X_1$ where $[D,X_i]=iX_i$. If $X_0 \notin \ojl(V,V_*) \oplus \CC \id$ we are done by Corollary \ref{cor_1} as $X_0$ is a block matrix. Otherwise, at least one of $X_1$ and $X_{-1}$ does not lie in $\ojl(V,V_*)$.

 Assume for example that $X_1 = (x_{ij}) \notin \gl(V,V_*)$. Then  there exist infinite sequences $\lbrace i_1<i_2<\dots\rbrace$ and $\lbrace j_1<j_2<\dots\rbrace$ such that
$x_{i_sj_s}\neq 0$. Moreover, we may assume that $.\,.\,.< i_s \leq j_s < i_{s+1} \leq j_{s+1} <\dots$. Set
$Y=\sum_{s\geq 1} E_{j_si_s}$. Then $[Y,X_1]\in I$ is a block matrix and we can again use Corollary \ref{cor_1}.
\end{proof}

Next we prove Theorem \ref{maintheo}.b).

Let $I$ be an ideal in $\End(V)$. Assume that $I$ is not contained in
$\gl(V,V^*) \oplus \CC \id$. Let 
$X\in I\setminus \left\lbrace\gl(V,V^*) \oplus \CC \id\right\rbrace$ 
and let $V_X \subset V$ denote the subspace  of all $X$-finite vectors. 

Assume first that $V_X\neq V$. Then there exists $v\in V$ such that $v,X\cdot v,X^2\cdot v,\dots$ are linearly 
independent. Let $M=\span\{v,X\cdot v,X^2\cdot v,\dots\}$ and $U$ be a
subspace of $V$ such that $V=M\oplus U$. Let $\pi_M$ be the projector
on $M$ with kernel $U$. Then $Y:=X+[X,\pi_M]\in I$. A simple
calculation shows that both $U$ and $M$ are $Y$-stable and
$Y|_M=X|_M$. Let $Z\in \End(M)$ be defined by $Z(U)=0$, $Z(X^i\cdot
v)=iX^{i-1}\cdot v$ for $i\geq 0$. Then $[Z,Y]$ is a diagonal matrix
with infinitely many distinct entries. Hence $I=\End(V)$ by Lemma \ref{lem_diag}.

Now suppose that $V_X=V$. Then we have a decomposition
$V=\bigoplus_{\lambda} V_\lambda$, where 
$V_\lambda:=\bigcup_n \Ker (X-\lambda\id)^n$ are generalized
eigenspaces of $X$. First, we assume that for all 
$\lambda$ there exists $n(\lambda)$ such that $V_\lambda=\Ker
(X-\lambda\id)^{n(\lambda)}$. 
In this case $V=\bigoplus_i V_i$ is a direct sum of $X-$stable
finite-dimensional subspaces. Thus $X$ is a block matrix and by 
Corollary \ref{cor_1} we obtain $I=\End (V)$. Next, we assume that
for some $\lambda$ the sequence  $\Ker (X-\lambda\id)^n$ does not
stabilize. In this case there are linearly independent vectors
$v_1,v_2,\dots$ such that $(X-\lambda\id)\cdot v_1=0$ and
$(X-\lambda\id)\cdot v_i=v_{i-1}$ for all $i>1$. 
We repeat the argument from the previous paragraph. Set $M$ to be the span of 
$v_k$, let $V=M\oplus U$ and define $Z \in \End(M)$ by setting $Z(U)=0$, $Z(v_i)=iv_{i+1}$. Then $[Z,([X,\pi_M]+X)]\in I$  is a diagonal matrix with infinitely many distinct entries. Hence $I=\End(V)$.

To complete the proof of Theorem \ref{maintheo} it remains to prove claim c).

\begin{lemma}\label{lem_idideal}
 If $\gg^M = \oo^M(V)$ (respectively, $ \sp^M(V)$, then any non-zero proper ideal $I \subset \gg^M$ 
equals $\oo(V)$  (respectively, $\sp(V)$.
\end{lemma}

\begin{proof} As follows from (\ref{matrix}), one can define a $\mathbb Z$-grading 
 $\gg^M=\gg^M_{-1} \oplus \gg^M_0 \oplus \gg^M_1$ such that $\gg^M_0 \simeq {\ojl}^M(V,V_*)$. 
This  grading is defined by the matrix
\begin{equation}\label{grad_matrix}
D=\left(
\begin{array}{c|c}
 \frac{1}{2}\mathrm{Id} & 0 \\ \hline
 0 & -\frac{1}{2}\mathrm{Id} 
\end{array}
\right), 
\end{equation}
i.e. $[D,X] = iX$ for $X \in \gg^M_i$. Since $D\in\gg^M$, any ideal $I\subset\gg^M$ is homogeneous in this grading. Note that the ideal generated by $D$ equals the entire Lie  algebra  $\gg^M$. Hence we may assume that $D\notin I$, and thus that $I_0 := I \cap \gg^M_{-1}$ is a proper ideal in $\gg^M_0$.  

Assume first that $I_1:=I \cap \gg_1^M$ is not contained in 
$\oo(V)$  (respectively, $\sp(V$) and let $X\in I_1\setminus \oo(V) $ (respectively, $X\in I_1\setminus \sp(V) $). 
By an argument similar to the one at the end of the proof of Corollary \ref{cor_2}, there exists $Y\in \gg^M_{-1}$ such that 
$[Y,X]\notin  \gl(V,V_*)\oplus \CC D$. Therefore by Corollary \ref{cor_2} we obtain a contradiction with our assumption that $I_0$ is a proper ideal in $\gg^M_0$.

Thus, we have proved that $I_1\subset \oo(V)$   (respectively, $\sp(V)$) and, similarly, $I_{-1}:=I \cap \gg_{-1}^M \subset \oo(V)$   (respectively, $\sp(V)$). 
Moreover, $I_0\subset  \gl (V,V_*)$ by Corollary  \ref{cor_2}. But then $I$ is a non-zero ideal in  $\oo(V)$   (respectively, $\sp(V)$).
Since both  $\oo(V)$ and $\sp(V)$ are simple, the statement follows.
\end{proof}

The proof of Theorem \ref{maintheo} is complete.
\end{proof}

\vspace{0.33cm}

Theorem \ref{maintheo} a) gives a complete list of ideals in  $\gl^M(V,V_*)$ for a countable-dimensional $V$. 
Indeed, since $\sl(V,V_*)$ is a simple Lie algebra, we obtain that all 
proper non-zero ideals in $\gl^M(V,V_*)$ are $\gl(V,V_*)$, $\sl(V,V_*)$, $\CC\id$,   $\sl(V,V_*)\oplus\CC\id$ and $\gl(V,V_*)\oplus\CC\id$.
In the same way the Lie algebra $\End(V)$ also has five proper non-zero ideals. 

Note that if $V$ is not countable-dimensional, then $\gl^M(V,V_*)$, $\End(V)$ and  $\oo^M(V)$ (respectively, $\sp^M(V)$)  have the following ideal:
\begin{equation}\label{ideal}
\lbrace X \;|\; \mathrm{dim}\,(X\cdot V) \mathrm{\;is\;finite\;or\;countable}\rbrace.
\end{equation}
Hence, Theorem \ref{maintheo} does not hold in this case.

\vspace{0.33cm}

\section{Dense subalgebras}\label{sect:7}
\subsection {Definition and general results}\label{sect:7.1}
\begin{defi}\label{dense_salg}Let $\ll$ be a Lie algebra, $R$ be an $\ll-$module, $\kk\subset \ll$ be a Lie subalgebra. We say that $\kk$ acts \textit{densely}
on $R$ if for any finite set of vectors $r_1,\dots,r_n \in R$ and any $l\in \ll$ there is $k\in\kk$ such that $k\cdot r_i = l\cdot r_i$ for $i=1,\dots,n$.
\end{defi}

\begin{lemma}\label{denshom}  Let $\kk\subset\ll$ and let $R,N$ be two $\ll$-modules such that $\kk$ acts densely on $R\oplus N$. Then 
$\Hom_\ll(R,N)=\Hom_\kk(R,N)$.
\end{lemma}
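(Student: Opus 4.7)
The inclusion $\Hom_\ll(R,N) \subset \Hom_\kk(R,N)$ is automatic since $\kk \subset \ll$, so the content is in the reverse inclusion. My plan is to fix $\varphi \in \Hom_\kk(R,N)$ and verify, for arbitrary $l \in \ll$ and $r \in R$, the identity $\varphi(l \cdot r) = l \cdot \varphi(r)$; once this is done we conclude $\varphi \in \Hom_\ll(R,N)$.

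The key trick is to apply the density hypothesis to \emph{two} vectors at once in $R \oplus N$, namely $(r,0)$ and $(0,\varphi(r))$. By density, there exists $k \in \kk$ such that $k \cdot (r,0) = l \cdot (r,0)$ and $k \cdot (0,\varphi(r)) = l \cdot (0,\varphi(r))$, i.e.\ $k \cdot r = l \cdot r$ in $R$ and $k \cdot \varphi(r) = l \cdot \varphi(r)$ in $N$. Then, using that $\varphi$ is $\kk$-linear for the middle equality,
\begin{equation*}
\varphi(l \cdot r) \;=\; \varphi(k \cdot r) \;=\; k \cdot \varphi(r) \;=\; l \cdot \varphi(r),
\end{equation*}
which is exactly what we wanted.

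There is essentially no obstacle here; the entire argument hinges on having density on the \emph{direct sum} $R \oplus N$ rather than separately on $R$ and $N$, and this is precisely the hypothesis made. I would just remark briefly why this hypothesis is the natural one: picking $k$ to simulate the action of $l$ on $r$ alone is not enough, because then $k \cdot \varphi(r)$ need not equal $l \cdot \varphi(r)$; one must simultaneously control the action on $r$ and on $\varphi(r)$, and the direct sum formulation of density delivers exactly this.
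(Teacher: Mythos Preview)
Your proof is correct and follows essentially the same approach as the paper: both arguments invoke density on $R\oplus N$ to find $k\in\kk$ with $k\cdot r=l\cdot r$ and $k\cdot\varphi(r)=l\cdot\varphi(r)$, then chain $\varphi(l\cdot r)=\varphi(k\cdot r)=k\cdot\varphi(r)=l\cdot\varphi(r)$. The only cosmetic difference is that the paper phrases this as a contradiction while you give the direct verification.
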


\begin{proof} There is an obvious inclusion  $\Hom_\ll(R,N)\subset\Hom_\kk(R,N)$. Suppose there exists $\varphi\in \Hom_\kk(R,N)\setminus \Hom_\ll(R,N)$.
Then one can find $r\in R$, $l\in\ll$ such that $\varphi(l\cdot r)\neq l\cdot \phi(r)$. Since $\kk$ acts densely on $R \oplus N$,
there exists $k\in\kk$ such that $k \cdot r = l \cdot r$ and $k \cdot \varphi(r) = l \cdot \varphi (r)$.
Therefore we have
$$\phi(l \cdot r)=\varphi(k \cdot r)= k \cdot \phi(r) = l \cdot \phi(r).$$
Contradiction.
\end{proof}

\begin{lemma}\label{densprop} Let $\kk\subset\ll$ and $R$ be an $\ll$-module on which $\kk$ acts densely. Then 

a) $\kk$ acts densely on any $\ll-$subquotient of $R$;

b) $\kk$ acts densely on $R^{\otimes n}$ for $n \geq 1$;

c)  $\kk$ acts densely on $R^{\oplus n}$ for $n \geq 1$;

d)  $\kk$ acts densely on $T(R)^{\oplus n}$ for $n \geq 1$.
\end{lemma}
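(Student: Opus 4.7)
The plan is to reduce each of the four claims to Definition \ref{dense_salg} applied to a carefully chosen finite subset of $R$. The unifying observation is that the $\ll$-action on every module appearing in (a)--(d) is determined, in a Leibniz-type fashion, by its action on finitely many ``building block'' vectors of $R$ that can be extracted from any prescribed finite family of test vectors.

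For (a), I would treat submodules and quotients separately. If $N\subset R$ is an $\ll$-submodule, density is automatic: a finite family $r_1,\dots,r_n\in N$ also lies in $R$, so the witnessing $k\in\kk$ produced by density on $R$ works verbatim (and $k\cdot r_i$ automatically stays in $N$). For a quotient $R/N$, I would lift test vectors $\bar r_1,\dots,\bar r_n$ to representatives $r_1,\dots,r_n\in R$, apply density on $R$ to obtain $k\in\kk$ with $k\cdot r_i=l\cdot r_i$, and then project back. Part (c) is equally formal: given $m$ test vectors in $R^{\oplus n}$, I would unpack each into its $n$ components to obtain a finite subset of $R$, and apply density there.

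Part (b) is the essential case. The actions of $l\in\ll$ and of any $k\in\kk$ on $R^{\otimes n}$ are both given by the Leibniz rule $x\cdot(r_1\otimes\cdots\otimes r_n)=\sum_{i=1}^n r_1\otimes\cdots\otimes(x\cdot r_i)\otimes\cdots\otimes r_n$. Given finitely many test vectors in $R^{\otimes n}$, I would expand each as a finite linear combination of simple tensors and collect the finite set $S\subset R$ of all tensor factors that appear. Applying density on $R$ to $S$ produces a single $k\in\kk$ with $k\cdot r=l\cdot r$ for every $r\in S$, and the Leibniz formula then shows immediately that $k$ and $l$ act identically on each of the test vectors.

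Finally, part (d) combines (b) and (c): any finite family in $T(R)^{\oplus n}$ lies inside $\bigl(\bigoplus_{q\leq p} R^{\otimes q}\bigr)^{\oplus n}$ for $p$ large enough, so it involves only finitely many tensor factors from $R$ across all components and all degrees, and the Leibniz argument from (b) applies uniformly. There is no serious obstacle here; the lemma is essentially formal. The one point that deserves notice is that Definition \ref{dense_salg} is strong enough to produce a \emph{single} $k\in\kk$ matching $l$ on an arbitrary finite list of vectors, which is precisely what lets us handle all of the building blocks simultaneously.
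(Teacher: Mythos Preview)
Your proposal is correct and follows essentially the same approach as the paper: expand the given test vectors into finitely many ``building blocks'' in $R$, invoke density there to find a single $k\in\kk$, and use the Leibniz rule (or projection/inclusion) to conclude. The paper's proof is terser but identical in substance, writing out only part (b) explicitly and declaring (c) and (d) ``similar''.
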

\begin{proof} a) Let $N$ be an $\ll$-submodule of $R$. It follows immediately from the definition that $\kk$ acts densely on $N$
and on $R/N$. That implies the statement.

b) Let $r_1,\dots, r_q\in R^{\otimes n}$. Write 
$$r_i=\sum_{j=1}^{s(i)}m^i_{j1}\otimes\dots\otimes m^i_{jn}$$
for some $m^i_{jp}\in R$. For any $l\in \ll$ there exists $k\in\kk$ such that $k \cdot m^i_{jp} = l \cdot m^i_{jp}$ for all $i\leq r$, $p\leq n$ and $j\leq s(i)$.
Then $k \cdot r_i = l \cdot r_i$ for all $i\leq q$.

Proving c) and d) is similar to proving b) and we leave it to the reader.
\end{proof}

\begin{lemma}\label{densprop1} Let $\kk, \ll$ and $R$ be as in Lemma \ref{densprop}. Then a $\kk$-submodule of $R$ is $\ll$-stable. Hence any $\kk$-subquotient of $R$ has a natural structure of $\ll$-module.
\end{lemma}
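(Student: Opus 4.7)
The plan is to exploit density in the most direct possible way: a single application to one vector at a time suffices. Let $N \subset R$ be a $\kk$-submodule. To show $N$ is $\ll$-stable, I would pick arbitrary $n \in N$ and $l \in \ll$ and, using Definition \ref{dense_salg} with the one-element set $\{n\}$, produce $k \in \kk$ with $k \cdot n = l \cdot n$. Since $N$ is $\kk$-stable, $k \cdot n \in N$, so $l \cdot n \in N$, proving $\ll \cdot N \subset N$.

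For the second assertion, let $N'' \subset N' \subset R$ be two $\kk$-submodules, so that $N'/N''$ is a $\kk$-subquotient of $R$. By the first part, both $N'$ and $N''$ are $\ll$-stable, hence the quotient $N'/N''$ inherits a canonical $\ll$-module structure from the ambient $\ll$-action on $R$. I would briefly note that this structure is well-defined (independent of the chosen representative in each coset) precisely because $N''$ is $\ll$-stable, and that it automatically satisfies the Lie module axioms since these are inherited from the $\ll$-module $R$. The restriction of this $\ll$-action to $\kk$ recovers the original $\kk$-module structure on the subquotient, so the construction is consistent.

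I do not anticipate a genuine obstacle here; the main content is already packaged in the density hypothesis, and the argument is just one line of bookkeeping. The only minor subtlety worth flagging is that density is stated for arbitrary finite sets of vectors, but for stability of a single submodule only the one-vector case is needed; the stronger form would become relevant if one wanted, say, a single $k \in \kk$ simulating $l$ on all of $N$ simultaneously, which is not what we are claiming.
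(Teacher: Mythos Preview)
Your proof is correct and is precisely what the paper intends: its own proof reads in full ``Straightforward from the definition,'' and your argument is the natural unpacking of that phrase. There is no difference in approach to discuss.
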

\begin{proof} Straightforward from the definition.
\end{proof}

\begin{theo}\label{theodense} Let $\mathcal C_\ll$ be a full abelian subcategory of $\ll-\operatorname{mod}$ such that $\kk$ acts densely on any object in $\mathcal C$.
Let $\operatorname {Res}:\ll-\operatorname{mod}\to \kk-\operatorname{mod}$ be the functor of restriction. Let $\mathcal C_\kk$ be the image of  $\mathcal C_\ll$ 
under  $\operatorname {Res}$. Then  $\mathcal C_\kk$ is a  full abelian subcategory of $\kk-\operatorname{mod}$ and $\operatorname{Res}$ induces an equivalence of $\mathcal C_{\kk}$ and $\mathcal C_{\ll}$.
\end{theo}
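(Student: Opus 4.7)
The plan is very direct: show that the restriction functor $\operatorname{Res}|_{\mathcal{C}_\ll}:\mathcal{C}_\ll\to\mathcal{C}_\kk$ is fully faithful and essentially surjective, and then transfer the abelian subcategory structure from $\mathcal{C}_\ll$ to $\mathcal{C}_\kk$ via this equivalence. Essential surjectivity is tautological, since $\mathcal{C}_\kk$ is defined as the image of $\operatorname{Res}$.

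For fully faithfulness, given $R,N\in\mathcal{C}_\ll$ the direct sum $R\oplus N$ again lies in $\mathcal{C}_\ll$, because $\mathcal{C}_\ll$ is a full abelian subcategory. By hypothesis $\kk$ then acts densely on $R\oplus N$, so Lemma \ref{denshom} gives the equality
\[
\Hom_\ll(R,N)=\Hom_\kk(R,N),
\]
which is exactly the assertion that $\operatorname{Res}|_{\mathcal{C}_\ll}$ is fully faithful.

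It remains to check that $\mathcal{C}_\kk$ is a full abelian subcategory of $\kk\text{-mod}$. Fullness is built into the definition, so the content is that $\mathcal{C}_\kk$ is closed under kernels, cokernels and finite direct sums computed inside $\kk\text{-mod}$. Given a $\kk$-morphism $\varphi:\operatorname{Res}(R_1)\to\operatorname{Res}(R_2)$ between objects of $\mathcal{C}_\kk$, full faithfulness writes $\varphi=\operatorname{Res}(\psi)$ for a unique $\ll$-morphism $\psi:R_1\to R_2$; since $\mathcal{C}_\ll$ is abelian, $\ker\psi$ and $\coker\psi$ belong to $\mathcal{C}_\ll$. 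The key point, where the dense action hypothesis genuinely enters, is that the underlying $\kk$-modules of $\ker\psi$ and $\coker\psi$ coincide with $\ker\varphi$ and $\coker\varphi$ in $\kk\text{-mod}$: this is immediate from Lemma \ref{densprop1}, which guarantees that every $\kk$-submodule of a module in $\mathcal{C}_\ll$ is automatically $\ll$-stable, so the $\kk$-theoretic kernel is already an $\ll$-submodule (and dually for cokernels). Closure under finite direct sums follows identically, while closure under $\kk$-isomorphism is handled by transporting the $\ll$-structure along any such isomorphism.

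I do not expect a real obstacle: the entire argument is a formal consequence of the three preceding lemmas, with Lemma \ref{denshom} supplying fully faithfulness and Lemma \ref{densprop1} supplying the compatibility between subquotients in $\kk\text{-mod}$ and in $\ll\text{-mod}$. The only point that deserves a moment's care is this last compatibility, since without it one could imagine a $\kk$-submodule of some $R\in\mathcal{C}_\ll$ that is not $\ll$-stable and hence produces a $\kk$-subquotient outside $\mathcal{C}_\kk$; the dense action hypothesis is precisely what rules this out.
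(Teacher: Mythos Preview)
Your proof is correct and follows essentially the same approach as the paper's: both rely on Lemma~\ref{denshom} to establish that $\Hom_\ll(R,N)=\Hom_\kk(R,N)$, which yields full faithfulness and, together with the tautological essential surjectivity, the equivalence. The paper's proof is terser and does not spell out the abelian-subcategory verification you carry out; your version is simply more explicit. One small remark: your appeal to Lemma~\ref{densprop1} for the kernel/cokernel compatibility is not strictly needed, since once $\varphi=\operatorname{Res}(\psi)$ the two are literally the same linear map, so $\ker\psi$ and $\ker\varphi$ coincide as subspaces automatically---Lemma~\ref{densprop1} would only be required to show that an \emph{arbitrary} $\kk$-submodule is $\ll$-stable, which is a stronger statement than you use here.
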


\begin{proof}
The first assertion follows from Lemma \ref{denshom}. It also follows from the same lemma that  $\operatorname{Res}(R)\simeq \operatorname{Res}(N)$
implies $R\simeq N$. Thus, every object in  $\mathcal C_\kk$ has a unique (up to isomorphism) structure of $\ll$-module. This provides a quasi-inverse of 
$\operatorname{Res}$. Hence the second assertion holds.
\end{proof}

Let $R$ be an $\ll$-module.
Denote by $\TT^R_{\ll}$ the full subcategory of $\ll$-mod consisting of all finite length subquotients of finite direct sums $T(R)^{\oplus n}$ for $n \geq 1$.

\begin{prop}\label{prop_eqcat} Let $\kk,\ll$ and $R$ be as in Lemma 8.2. Then the restriction functor
\begin{equation}\label{rest_fact}
 \operatorname{Res}:\TT^R_{\ll} \rightsquigarrow \TT^R_{\kk}
\end{equation}
is an equivalence of monoidal categories.
\end{prop}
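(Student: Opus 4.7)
The plan is to reduce Proposition \ref{prop_eqcat} to Theorem \ref{theodense} applied to $\mathcal{C}_\ll := \TT^R_\ll$, and then verify compatibility with the tensor structure.

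First I would verify the hypothesis of Theorem \ref{theodense}: namely, that $\kk$ acts densely on every object of $\TT^R_\ll$. By Lemma \ref{densprop} b) and d), $\kk$ acts densely on $T(R)^{\oplus n}$ for every $n \geq 1$, and by Lemma \ref{densprop} a), density is inherited by every $\ll$-subquotient. Since every object of $\TT^R_\ll$ is by definition such a subquotient, the hypothesis holds.

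Second, I would match the image category with $\TT^R_\kk$. Theorem \ref{theodense} produces an equivalence of $\TT^R_\ll$ with a full abelian subcategory of $\kk$-mod, namely the essential image of $\operatorname{Res}$. I need to check this image is exactly $\TT^R_\kk$. For the forward inclusion, if $M \in \TT^R_\ll$ is a finite length $\ll$-subquotient of $T(R)^{\oplus n}$, then $\operatorname{Res}(M)$ is a $\kk$-subquotient of $T(R)^{\oplus n}$; by Lemma \ref{densprop1} applied inside $M$, every $\kk$-submodule of $M$ is automatically $\ll$-stable, so the $\ll$- and $\kk$-composition series of $M$ coincide, and in particular $\operatorname{Res}(M)$ has finite $\kk$-length. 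For the reverse inclusion, let $N \in \TT^R_\kk$, realized as $N = N_1/N_0$ for $\kk$-submodules $N_0 \subset N_1 \subset T(R)^{\oplus n}$; by Lemma \ref{densprop1}, both $N_0$ and $N_1$ are $\ll$-stable, so $N$ carries a canonical $\ll$-structure as an $\ll$-subquotient, and finite $\kk$-length forces finite $\ll$-length by the same argument. Thus the $\ll$-module so obtained lies in $\TT^R_\ll$ and restricts to $N$.

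Third, I would upgrade the abelian equivalence to a monoidal one. The functor $\operatorname{Res}$ is strictly monoidal at the level of $\ll$-mod and $\kk$-mod: for any $\ll$-modules $A,B$, the underlying vector space of $A \otimes B$ is the same whether viewed as an $\ll$- or $\kk$-module, and the action formulas agree. Hence the natural identity isomorphism $\operatorname{Res}(A \otimes B) \cong \operatorname{Res}(A) \otimes \operatorname{Res}(B)$ is compatible with associator and unit constraints. Combined with the essentially surjective, fully faithful property established above, this upgrades the equivalence to an equivalence of monoidal categories.

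The main subtlety is not the formal argument but ensuring that $\TT^R_\kk$ is genuinely closed under subquotients and tensor products in a way compatible with the $\ll$-structure; this is the content of Lemma \ref{densprop1}, which guarantees that enlarging the action from $\kk$ to $\ll$ does not create new submodules, so notions of finite length, subquotient, and tensor product transfer without loss between $\TT^R_\ll$ and $\TT^R_\kk$. Once this is observed, the proof is essentially a bookkeeping application of Theorem \ref{theodense}.
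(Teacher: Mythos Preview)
Your proof is correct and follows the same route as the paper: verify density on all objects of $\TT^R_\ll$ via Lemma~\ref{densprop}, invoke Theorem~\ref{theodense}, and observe that $\operatorname{Res}$ is strict monoidal. The paper compresses all of this into three sentences; your version is simply a more careful unpacking, and your explicit use of Lemma~\ref{densprop1} to establish both inclusions $\operatorname{Res}(\TT^R_\ll)\subset\TT^R_\kk$ and $\TT^R_\kk\subset\operatorname{Res}(\TT^R_\ll)$ (in particular the preservation of finite length in both directions) fills in a step the paper leaves implicit.
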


\begin{proof} By Lemma \ref{densprop},  $\operatorname{Res}(\TT^R_{\ll})=\TT^R_{\kk}$. Thus $\operatorname{Res}$ is an equivalence of $\TT_{\ll}^R$ and $\TT_{\kk}^R$ by Theorem \ref{theodense}. In addition, $\operatorname{Res}$ clearly commutes with $\otimes$, hence the statement.
\end{proof}

\subsection{Dense subalgebras of Mackey Lie algebras}\label{sect:7.2}

Let now $\gg^M$ denote one of the Lie algebras $\gl^M(V,W), \oo^M(V), \sp^M(V)$, and $\gg$ denote respectively the subalgebra $\gl(V,W), \oo(V), \sp(V)$.
By $R$ we denote the $\gg^M$-module $V\oplus W$ (respectively, $V$).

In what follows we call a Lie subalgebra 
$\aa\subset \gg^M$ \textit{dense} if it acts densely on $R$.
It is easy to see that $\gg$ is a dense subalgebra of $\gg^M$.

Here are further examples of dense subalgebras of $\gl^M(V,V_*)$ for a countable-dimensional space $V$. We identify $\gl^M(V,V_*)$ with the Lie algebra of matrices $(x_{ij})_{i\geq1,j\geq1}$ each row and column of which are finite. 

1. The Lie algebra $\jj(V,V_*)$ consisting of matrices $J = (x_{ij})_{i\geq1,j\geq1}$ such that $x_{ij}=0$ when 
$|i-j|>m_J$ for some $m_J \in \ZZ_{>0}$ (generalized Jacobi matrices), is dense in $\gl^M(V,V_*)$.

2. The subalgebra $\mathfrak{lj}(V,V_*) \subset \jj(V,V_*)$ consisting of matrices $X=(x_{ij})_{i\geq1,j\geq1}$ satisfying the condition $x_{ij}=0$ when $i-j>c_Xj$ for some $c_X \in \ZZ_{>0}$, is dense in $\gl^M(V,V_*)$.

3.  The subalgebra $\mathfrak{pj}(V,V_*)$ of matrices $Y=(x_{ij})_{i\geq1,j\geq1}$ satisfying the condition $x_{ij}=0$ when 
$i-j>p_Y(j)$ for some polynomial $p_Y(t) \in \ZZ_{\geq0}[t]$, is dense in $\gl^M(V,V_*)$.

4. Let $\gg$ be a countable-dimensional diagonal Lie algebra. If $\gg$ is of type $\sl$,  
fix a chain (\ref{def_gg}) of diagonal embeddings where $\gg_i \simeq \sl(n_i)$. Observe that given a chain (\ref{inf_ch_nat}), we can always choose a chain
$$V^*_{\oj_{1}} \stackrel{\mu_1}{\hookrightarrow} V^*_{\oj_{2}} \stackrel{\mu_2}{\hookrightarrow}  \dots \hookrightarrow V^*_{\oj_{i}} \stackrel{\mu_i}{\hookrightarrow} V^*_{\oj_{i+1}} \hookrightarrow \dots$$ so that the non-degenerate pairing 
$V_{\oj_i} \times V^*_{\oj_i} \rightarrow \CC$ restricts to a non-degenerate pairing $\ae_i(V_{\oj_i}) \times \mu_i(V^*_{\oj_i}) \rightarrow \CC$. 
Therefore, by multiplying $\mu_i$ by a suitable constant, we can assume that $\ae_i$ and $\mu_i$ preserve the natural pairings 
$V_{\oj_i} \times V^*_{\oj_i} \rightarrow \CC$. 
This shows that, given a natural representaion $V$ of $\gg$, there always is a natural representation $V_*$ such that there is a non-degenerate $\oj$-invariant pairing $V \times V_* \rightarrow \CC$. 
This gives an embedding of $\gg$ as a dense subalgebra in $\gl^M(V,V_*)$

If $\gg$ is of type $\oo$ or $\sp$, then a natural representation $V$ of $\gg$ is defined again by a chain of embeddings (\ref{inf_ch_nat}). Moreover, $V$ always carries a respective non-degenerate symmetric or symplectic form. Therefore $\gg$ can be embedded as a dense subalgebra in $\oo^M(V)$, or respectively in $\sp^M(V)$. 

The following statement is a particular case of Proposition \ref{prop_eqcat}. 

\begin{corollary}\label{prop_squot} Let $\aa$ be a dense subalgebra in $\gg^M$. Then the monoidal categories $\TT^R_{\gg^M}$ and $\TT^R_{\aa}$ are equivalent.
\end{corollary}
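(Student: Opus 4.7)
The statement is presented as a direct specialization of Proposition \ref{prop_eqcat}, so my plan is simply to verify the hypotheses of that proposition for the triple $(\aa,\gg^M,R)$ with $R=V\oplus W$ in the linear system case and $R=V$ in the orthogonal/symplectic case.

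First, I would note that $R$ is naturally a $\gg^M$-module: for $\gg^M=\gl^M(V,W)$ both $V$ and $W$ carry commuting $\gg^M$-actions (via $\phi$ and $\phi^*_{|W}$), so $R=V\oplus W$ is a $\gg^M$-module; for $\gg^M=\oo^M(V),\sp^M(V)$ the module $R=V$ is built into the very definition \eqref{rel:Liealg}. The second, and only substantive, hypothesis to check is that $\aa$ acts densely on $R$ in the sense of Definition \ref{dense_salg}. But this is literally the defining condition of a dense subalgebra as stipulated at the start of Section \ref{sect:7.2}.

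With these two verifications in hand, Proposition \ref{prop_eqcat} applies verbatim with $\ll\eqdef\gg^M$, $\kk\eqdef\aa$ and $R$ as above, and yields that the restriction functor
\[
 \operatorname{Res}:\TT^R_{\gg^M}\rightsquigarrow\TT^R_{\aa}
\]
is an equivalence of monoidal categories. I would present the proof essentially as one line: ``This is the special case of Proposition \ref{prop_eqcat} obtained by taking $\ll=\gg^M$, $\kk=\aa$, and $R=V\oplus W$ (respectively $R=V$).''

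Since no genuine obstacle appears, the only thing worth flagging for the reader is where each ingredient of Proposition \ref{prop_eqcat} lives: density of $\aa$ on $R$ by definition, density on $T(R)^{\oplus n}$ by Lemma \ref{densprop}(d), and the monoidal upgrade of the equivalence by the last sentence of the proof of Proposition \ref{prop_eqcat}. One might add a remark that the resulting equivalence is compatible with the examples of dense subalgebras listed above (the finitary algebras $\gg$ themselves, the Jacobi algebra $\jj(V,V_*)$, its enlargements $\mathfrak{lj}(V,V_*)$ and $\mathfrak{pj}(V,V_*)$, and the diagonal Lie algebras embedded in their ambient Mackey algebras), but no further argument is needed.
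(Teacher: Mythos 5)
Your proof is correct and coincides with the paper's own treatment: the text introduces this corollary precisely as ``a particular case of Proposition \ref{prop_eqcat}'' with $\ll=\gg^M$, $\kk=\aa$, and $R=V\oplus W$ (respectively $R=V$). Your explicit verification that $\aa$ acts densely on $R$ by the very definition of a dense subalgebra in Section \ref{sect:7.2} is exactly the (trivial) hypothesis check the paper leaves implicit.
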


\subsection {Finite corank subalgebras of $\gg^M$ and the category $\TT_{\gg^M}$}\label{sect:7.3} 
We now generalize the notion of finite corank subalgebra to Mackey Lie algebras.

Let $V_f\subset V, W_f\subset W$ be a non-degenerate pair of finite-dimensional subspaces. Then $\gl(W_f^\perp,V_f^\perp)$ is a subalgebra of $\gl^M(W_f^\perp,V_f^\perp)$ and also a subalgebra of   
$\gl^M(V,W)$. Moreover, the following important relation holds
\begin{equation}\label{gl}
\sl(V,W)/\sl(W_f^\perp,V_f^\perp)=\gl(V_f,W_f)\oplus (V_f\otimes V_f^\perp) \oplus (W_f^\perp\otimes W_f) = \gl^M(V,W)/\gl^M(W_f^\perp,V_f^\perp).
\end{equation} 

We call a subalgebra $\kk\subset \gl^M(V,W)$ a \textit{finite corank subalgebra} if it contains  
$\gl^M(W_f^\perp,V_f^\perp)$ for some non-degenerate pair $V_f \subset V, W_f \subset W$.

Similarly, let $V$ be a vector space equipped with a symmetric (respectively, skew-symmetric) non-degenerate form and $V_f$ be a non-degenerate finite-dimensional
subspace. We have a well-defined subalgebra $\oo^M(V_f^\perp)\subset \oo^M(V)$
(respectively, $\sp^M(V_f^\perp)\subset \sp^M(V)$). Furthermore,

\begin{equation}\label{0}
\oo(V)/\oo(V_f^\perp)=\oo(V_f)\oplus (V_f\otimes V_f^\perp)=\oo^M(V)/\oo^M(V_f^\perp),\,\,
\sp(V)/\sp(V_f^\perp)=\sp(V_f)\oplus (V_f\otimes V_f^\perp)=\sp^M(V)/\sp^M(V_f^\perp).
\end{equation} 

We call $\kk\subset \oo^M(V)$ (respectively, $\sp^M(V)$) a \textit{finite corank subalgebra}  if  
it contains $ \oo^M(V_f^\perp)$ (respectively, $\sp^M(V_f^\perp)$) for some $V_f$ as above.

Next, we say that $\gg^M$-module $L$ {\it satisfies the large annihilator condition} if the annihilator in $\gg^M$ of any $l\in L$
contains a finite corank subalgebra. It follows immediately from the 	definition that if $L_1$ and $L_2$ satisfy the large annihilator condition,
then the same is true for $L_1\oplus L_2$ and $L_1\otimes L_2$.

\begin{lemma}\label{corank2} 
Let $L$ be a $\gg^M$-module which is integrable as a $\gg-$module. 
If $L$ satisfies the large annihilator condition (as a $\gg^M-$module), then $\gg$ acts densely on $L$. 
\end{lemma}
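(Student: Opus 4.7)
The plan is to reduce the density property to the elementary structural identities (\ref{gl}) and (\ref{0}), which assert that the natural inclusion $\gg \hookrightarrow \gg^M$ induces a vector space isomorphism of quotients by corresponding finite corank subalgebras. With this in hand, the large annihilator hypothesis will let me replace any given $X \in \gg^M$ by an element of $\gg$ whose action on a prescribed finite family of vectors coincides with that of $X$.

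In detail, I would fix $X \in \gg^M$ and vectors $l_1,\dots,l_n \in L$, and then invoke the large annihilator condition for each $l_i$ to produce a non-degenerate finite-dimensional pair $V_{f,i} \subset V$, $W_{f,i} \subset W$ (respectively, a non-degenerate finite-dimensional subspace $V_{f,i} \subset V$) such that $\gl^M(W_{f,i}^\perp, V_{f,i}^\perp)$ (respectively, $\oo^M(V_{f,i}^\perp)$ or $\sp^M(V_{f,i}^\perp)$) annihilates $l_i$. Next I would enlarge these to a single non-degenerate finite-dimensional pair $V_f, W_f$ (or subspace $V_f$) containing all $V_{f,i}, W_{f,i}$, which is always possible by a standard linear-algebra adjustment using non-degeneracy of the full pairing. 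The inclusions $W_f^\perp \subset W_{f,i}^\perp$ and $V_f^\perp \subset V_{f,i}^\perp$ then force $\gg^M(W_f^\perp, V_f^\perp) \subset \gg^M(W_{f,i}^\perp, V_{f,i}^\perp)$, so $\gg^M(W_f^\perp, V_f^\perp)$ simultaneously annihilates every $l_i$.

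The final step is to apply (\ref{gl}) or (\ref{0}): the composition $\gg \hookrightarrow \gg^M \twoheadrightarrow \gg^M/\gg^M(W_f^\perp, V_f^\perp)$ is surjective as a map of vector spaces. Choosing $Y \in \gg$ with the same image as $X$, I will have $X - Y \in \gg^M(W_f^\perp, V_f^\perp)$; evaluating on each $l_i$ then yields $Y \cdot l_i = X \cdot l_i$, which is exactly the density condition of Definition \ref{dense_salg} for $\aa = \gg$ acting on $L$.

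The only real obstacle I anticipate is minor bookkeeping: verifying that the naive sums of the $V_{f,i}, W_{f,i}$ can always be enlarged to a non-degenerate pair, and handling the linear system case in parallel with the orthogonal and symplectic variants. The $\gg$-integrability of $L$ does not seem to enter this particular argument, but is a natural ambient hypothesis under which the lemma is applied.
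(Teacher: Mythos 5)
Your argument is correct and is essentially the paper's own proof: the paper packages the $n$ vectors as a single vector of $L^{\oplus n}$ (using that the large annihilator condition passes to finite direct sums, which amounts to the same enlargement of non-degenerate pairs you perform explicitly) and then applies the identities (\ref{gl}) and (\ref{0}) exactly as you do to replace $X\in\gg^M$ by $Y\in\gg$ agreeing with it modulo the common finite corank annihilator. Your observation that $\gg$-integrability is not needed for this particular step also matches the paper's proof.
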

\begin{proof} Since $L$ satisfies the large annihilator condition as a $\gg^M-$module, so does also $L^{\oplus n}$. It suffices to show that for all $n \in \ZZ_{\geq 1}$ and all $l\in L^{\oplus n}$ 
we have 
\begin{equation}\label{eq:sl_u}
 \gg\cdot l=\gg^M\cdot l.
\end{equation}
However, as $l$ is annihilated by $\gl^M(W_f^\perp,V_f^\perp)$ for an appropriate finite-dimensional non-degenerate pair $V_f \subset V, W_f \subset W$ in the case $\gg = \sl(V,W)$ (respectively, by $\oo^M(V_f^{\perp}),\sp^M(V_f^{\perp})$ in the case $\gg = \oo(V),\sp(V)$), (\ref{eq:sl_u}) follows from (\ref{gl}), (respectively, from (\ref{0})).
\end{proof}

\begin{lemma}\label{corank1} Let $L$ be a $\gg$-module satisfying the large annihilator condition. Then the $\gg$-module
structure on $L$ extends in a unique way to a $\gg^M$-module structure such that $L$ satisfies  the large annihilator condition as a $\gg^M$-module.
\end{lemma}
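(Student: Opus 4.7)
My plan is to construct the extension by an explicit formula and to deduce uniqueness from the density framework developed in subsection \ref{sect:7.1}. I describe the case $\gg = \sl(V,W)$ and $\gg^M = \gl^M(V,W)$; the orthogonal and symplectic cases are analogous, using relation (\ref{0}) in place of (\ref{gl}).

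Given $l \in L$, the large annihilator hypothesis provides a finite-dimensional non-degenerate pair $(V_f, W_f)$ with $\sl(W_f^\perp, V_f^\perp) \cdot l = 0$. Relation (\ref{gl}) identifies the quotient $\gl^M(V,W)/\gl^M(W_f^\perp, V_f^\perp)$ with $\sl(V,W)/\sl(W_f^\perp, V_f^\perp)$, so for any $x \in \gl^M(V,W)$ I may choose $y \in \sl(V,W)$ with $x - y \in \gl^M(W_f^\perp, V_f^\perp)$ and set $x \cdot l := y \cdot l$ using the given $\gg$-action. Two such choices of $y$ differ by an element of $\sl(V,W) \cap \gl^M(W_f^\perp, V_f^\perp) = \sl(W_f^\perp, V_f^\perp)$, which annihilates $l$; for two different pairs, a common enlargement (still annihilating $l$ by LA) shows both give the same value. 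The formula extends the given $\gg$-action (take $y = x$ when $x \in \sl$), and it forces $\gl^M(W_f^\perp, V_f^\perp) \cdot l = 0$ (take $y = 0$), so the large annihilator condition automatically holds for the resulting $\gg^M$-module structure with the very same pairs.

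The key technical step is to verify that the formula defines a Lie action, i.e.\ $[x_1, x_2] \cdot l = x_1 \cdot (x_2 \cdot l) - x_2 \cdot (x_1 \cdot l)$. I would enlarge $(V_f, W_f)$ so that it simultaneously annihilates $l$, $y_1 \cdot l$ and $y_2 \cdot l$, which is possible because each of these vectors has its own finite-corank annihilator and the intersection of finitely many such is again of finite corank. Then $x_i \cdot (x_j \cdot l) = y_i \cdot (y_j \cdot l)$, so the right-hand side collapses to $[y_1, y_2] \cdot l$ via the iterated $\sl$-action. Expanding $[x_1, x_2] = [y_1, y_2] + [y_1, z_2] + [z_1, y_2] + [z_1, z_2]$ and performing a block-matrix analysis with respect to the decomposition $V = V_f \oplus W_f^\perp$, the summand $[z_1, z_2]$ lies in $\gl^M(W_f^\perp, V_f^\perp)$ and acts trivially, and both $[y_i, z_j]$ are finite-rank traceless operators, hence in $\sl(V,W)$. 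The delicate point is showing that $([y_1, z_2] + [z_1, y_2]) \cdot l = 0$ in the given $\sl$-action: this has to be extracted from the consequences of $\sl(W_f^\perp, V_f^\perp) \cdot (y_i \cdot l) = 0$, which give a family of relations $[k, y_i] \cdot l = 0$ for all $k \in \sl(W_f^\perp, V_f^\perp)$ and which encode precisely the interplay between the finite-rank "$B_i, C_i$" blocks of $y_i$ and the infinite-rank "$E_j$" block of $z_j$ needed to force the residual "cross-block" terms to vanish on $l$. This is the step I expect to be the main obstacle.

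Uniqueness then follows immediately from the density framework. If $L_1$ and $L_2$ are two $\gg^M$-module structures on $L$ extending the given $\gg$-action and both satisfying the large annihilator condition, then Lemma \ref{corank2} shows that $\gg$ acts densely on $L_1 \oplus L_2$. The identity map $L_1 \to L_2$ is $\gg$-linear by hypothesis, so Lemma \ref{denshom} forces it to be $\gg^M$-linear, and the two $\gg^M$-structures therefore coincide.
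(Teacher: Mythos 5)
Your construction is exactly the one the paper uses, and the paper itself compresses the remaining verification into the phrase ``it is an easy check.'' You have correctly located the only nontrivial point --- that the formula $x\cdot l:=y\cdot l$ respects the Lie bracket --- but you leave it open, so as written the argument is incomplete. The step does close, and more easily than you fear: no genuine ``interplay'' between the finite-rank blocks of $y_i$ and the infinite-rank block of $z_j$ is needed, only a finite-rank approximation. First simplify your setup: since the value of $x\cdot l$ does not depend on the choice of pair, you may enlarge once and for all to a non-degenerate pair $(V_{f'},W_{f'})$ such that $\kk_{f'}:=\sl(W_{f'}^\perp,V_{f'}^\perp)$ annihilates $l$, $y_1\cdot l$ and $y_2\cdot l$ simultaneously, and decompose $x_i=y_i+z_i$ with respect to this one pair; then $x_1\cdot(x_2\cdot l)-x_2\cdot(x_1\cdot l)=[y_1,y_2]\cdot l$ with no correction terms, and it remains to show that each cross term $[z_1,y_2]\cdot l$ and $[y_1,z_2]\cdot l$ vanishes separately. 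Write $y_2=\sum_j v_j\otimes w_j$ as a finite sum; then $[z_1,y_2]=\sum_j(z_1\cdot v_j)\otimes w_j-\sum_j v_j\otimes(z_1^*\cdot w_j)$ depends on $z_1$ only through the finitely many vectors $z_1\cdot v_j\in W_{f'}^\perp$ and $z_1^*\cdot w_j\in V_{f'}^\perp$. Hence there is a finite-rank $k\in\kk_{f'}$ with $[k,y_2]=[z_1,y_2]$ as operators (prescribing a finite-rank operator with given values on finitely many vectors and given dual values on finitely many functionals is elementary linear algebra, and tracelessness is arranged by adding a rank-one operator killing all the $v_j$ and all the $w_j$). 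Then $[z_1,y_2]\cdot l=[k,y_2]\cdot l=k\cdot(y_2\cdot l)-y_2\cdot(k\cdot l)=0$ by the choice of $(V_{f'},W_{f'})$, and the Lie-action axiom follows.

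Two smaller points. For uniqueness you do not need the density machinery at all: if a $\gg^M$-module structure extends the given $\gg$-action and satisfies the large annihilator condition, then $\gl^M(W_f^\perp,V_f^\perp)\cdot l=0$ for some pair, and \refeq{gl} forces $x\cdot l=y\cdot l$, i.e. the extension must coincide with the one you constructed. This also avoids a hypothesis mismatch in your citation: Lemma \ref{corank2} is stated for modules that are integrable over $\gg$, which is not among the hypotheses of the present lemma (its proof happens not to use integrability, but as stated it does not literally apply). Finally, note that Lemma \ref{denshom} requires $\kk$ to act densely on the direct sum $R\oplus N$, which is why you correctly pass to $L_1\oplus L_2$; that part of your argument is fine once the density is justified.
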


\begin{proof} Consider the case $\gg=\sl(V,W)$. Any $l\in L$ is annihilated by $\sl(W_f^\perp,V_f^\perp)$ for an appropriate finite-dimensional non-degenerate pair $V_f \subset V, W_f \subset W$.
Let $x\in \gl^M(V,W)$.
By (\ref{gl}) there exists $y\in \sl(V,W)$ such that $x+\gl^M(W_f^\perp,V_f^\perp)=y+\sl(W_f^\perp,V_f^\perp)$. Moreover, $y$ is unique modulo $\sl(W_f^\perp,V_f^\perp)$. Thus we can set $x \cdot l:=y \cdot l$. It is an easy check that this yields a well-defined $\gl^M(V,W)$-module structure on $L$ compatible with the $\sl(V,W)-$module structure on $L$. 

For $\gg = \oo(V),\sp(V)$ one uses (\ref{0}) instead of (\ref{gl}).
\end{proof}


\vspace{0.33cm}

We can now define the category $\TT_{\gg^M}$ as an analogue of the category $\TT_\gg$. More precisely, the category $\TT_{\gg^M}$ is the full subcategory of $\gg^M$-mod consisting of all modules of finite length, 
integrable over $\gg$ and satisfying the large annihilator condition.

The following is our main result in section \ref{sect:7}.
\begin{theo}\label{mainres_sect7}
 a) $\TT_{\gg^M} = \TT_{\gg^M}^R$, where $R = V\oplus W$ for $\gg = \sl(V,W)$ and $R=V$ for $\gg = \oo(V), \sp(V)$.
 
 b) The functor $\operatorname{Res}:\TT_{\gg^M} \rightsquigarrow \TT_\gg$ is an equivalence of monoidal categories.
\end{theo}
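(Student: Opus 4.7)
The plan is to deduce Theorem \ref{mainres_sect7} from Proposition \ref{prop_eqcat} once two things are in place: that $\gg$ is a dense subalgebra of $\gg^M$ acting on $R$, and that $\TT_{\gg^M}$ coincides with $\TT_{\gg^M}^R$ (the analogous identity $\TT_\gg = \TT_\gg^R$ being essentially Corollary \ref{corr:1} b)).

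For part (a), I would prove both inclusions. The inclusion $\TT_{\gg^M}^R \subset \TT_{\gg^M}$ rests on the observation that $R$ itself satisfies the large annihilator condition for $\gg^M$: any $r \in R$ lies in $V_f \oplus W_f$ (respectively in $V_f$) for some finite-dimensional non-degenerate pair, and then $\gl^M(W_f^\perp, V_f^\perp)$ (respectively $\oo^M(V_f^\perp)$ or $\sp^M(V_f^\perp)$) annihilates $r$ since it acts as zero on the chosen complementary piece. Since the large annihilator condition and $\gg$-integrability are both preserved by direct sums, tensor products and subquotients, every object of $\TT_{\gg^M}^R$ belongs to $\TT_{\gg^M}$. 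For the reverse inclusion, let $L \in \TT_{\gg^M}$. I would first show $L|_\gg \in \TT_\gg$: the large annihilator condition over $\gg$ follows from $\sl(W_f^\perp, V_f^\perp) \subset \gl^M(W_f^\perp, V_f^\perp)$ (and analogously for $\oo, \sp$), while finite length over $\gg$ matches finite length over $\gg^M$ because Lemma \ref{corank2} gives the density of $\gg$ on $L$ and Lemma \ref{densprop1} then forces every $\gg$-submodule of $L$ to be $\gg^M$-stable. By Corollary \ref{corr:1} b), $L|_\gg$ is a $\gg$-subquotient of $T(R)^{\oplus k}$ for some $k$; but $T(R)^{\oplus k}$ also satisfies the large annihilator condition, so $\gg$ acts densely there too, and by Lemma \ref{densprop1} the intermediate $\gg$-submodule cutting out the subquotient is already $\gg^M$-stable, while by Lemma \ref{denshom} the resulting $\gg$-linear identification is automatically $\gg^M$-linear. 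Hence $L \in \TT_{\gg^M}^R$.

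For part (b), I would first verify that $\gg$ is dense in $\gg^M$ in the sense of Definition \ref{dense_salg}. Given $r_1, \dots, r_n \in R$ and $l \in \gg^M$, enlarge to a finite-dimensional non-degenerate pair $V_f \subset V, W_f \subset W$ (respectively a non-degenerate $V_f \subset V$) containing all $r_i$ and all $l \cdot r_i$; then the restriction of $l$ to this finite-dimensional piece lies in the corresponding classical finite-dimensional Lie algebra (since $l$ preserves the pairing or the form), and a small further enlargement absorbs the trace condition in the $\sl$ case. Applying Proposition \ref{prop_eqcat} with $\kk = \gg$, $\ll = \gg^M$ and module $R$ then gives an equivalence of monoidal categories $\Res: \TT_{\gg^M}^R \to \TT_\gg^R$. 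Combined with part (a) and the identification $\TT_\gg = \TT_\gg^R$ from Corollary \ref{corr:1} b), this yields the desired equivalence.

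The main technical obstacle is the reverse inclusion in part (a): upgrading a $\gg$-subquotient presentation of $L|_\gg$ to a $\gg^M$-subquotient presentation of $L$. Both large annihilator hypotheses (on $L$ and on $T(R)^{\oplus k}$) are essential, because they are precisely what makes density available both on the source and on the target; once density is in place, Lemmas \ref{denshom} and \ref{densprop1} promote the $\gg$-structures to $\gg^M$-structures automatically.
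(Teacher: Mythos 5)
Your proposal is correct and follows essentially the same route as the paper: the easy inclusion $\TT_{\gg^M}^R\subset\TT_{\gg^M}$, then for $L\in\TT_{\gg^M}$ the passage through Lemma \ref{corank2} (density of $\gg$ on $L$), Corollary \ref{corr:1}~b) to realize $L_{\downarrow\gg}$ as a $\gg$-subquotient of $T(R)^{\oplus n}$, and the promotion of that subquotient and of the identifying isomorphism to the $\gg^M$-level via the density lemmas, with part b) then following from Proposition \ref{prop_eqcat}. The only difference is presentational: you unpack Proposition \ref{prop_eqcat} into Lemmas \ref{denshom} and \ref{densprop1} and supply the short verifications (density of $\gg$ in $\gg^M$, the large annihilator condition for $R$) that the paper leaves as ``clear''.
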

\begin{proof}
 It is clear that $\TT_{\gg^M}^R$ is a full subcategory of $\TT_{\gg^M}$. We need to show only that any $L \in \TT_{\gg^M}$ is isomorphic to a subquotient of $T(R)^{\oplus n}$ for some $n$. Obviously, $L$ satisfies the large annihilator condition as a $\gg-$module. Furthermore, by Lemma \ref{corank1} a), $\gg$ acts densely on $L$, hence $L$ has finite length as a $\gg-$module. By Corollary \ref{corr:1} b), $L$ is isomorphic to a $\gg-$subquotient of $T(R)^{\oplus n}$ for some $n$, and by Proposition \ref{prop_eqcat} $L$ is the restriction to $\gg$ of some $\gg^M-$subquotient $L'$ of $T(R)^{\oplus n}$. However, since $L'$ satisfies the large annihilator condition, Lemma \ref{corank2} implies that there is an isomorphism of $\gg^M-$modules $L\simeq L'$. This proves a).
 
 b) follows from a) and Proposition \ref{prop_eqcat}.
\end{proof}

\vspace{0.33cm}

The following diagram summarizes the equivalences of monoidal categories established in this paper:
$$ \TT_{\aa} \stackrel{\operatorname{Res}}{\leftsquigarrow} \TT_{\gg^M} = \TT_{\gg^M}^R \stackrel {\operatorname{Res}}{\rightsquigarrow} \TT_{\gg} \stackrel{\Phi}{\rightsquigarrow} \TT_{\gg_c}.$$
Here $\aa$ is any dense subalgebra of $\gg^M$ and $R = V\oplus W$ for $\gg = \sl(V,W)$, $R=V$ for $\gg = \oo(V), \sp(V)$. In particular, when $\gg = \sl(V,V_*)$ for countable-dimensional $V$ and $V_*$, $\aa$ can be chosen as the Lie algebra $\jj(V,V_*)$ or as any countable-dimensional diagonal Lie algebra.

\section{Further results and open problems}\label{sect:8}

Theorem \ref{mainres_sect7} a) can be considered an analogue of Theorem \ref{th:6.1} and Corollary \ref{corr:1} b) as it provides two equivalent descriptions of the category $\TT_{\gg^M}$. It is interesting to have a longer list of such equivalent descriptions.

The following proposition provides another equivalent condition characterizing the objects of $\TT_{\gg^M}$ under the additional assumption that 
$\gg= \sl(V,V_*), \oo(V), \sp(V)$ is countable dimensional.

\begin{prop}\label{prop_sect8}
Let $\gg^M=\gl^M(V,V_*), \oo^M(V)$, $\sp^M(V)$ for a countable dimensional $V$, and let $L$ be a $\gg^M-$module of finite length which is integrable as a $\gg-$module. Then $L$ is an object of $\TT_{\gg^M}$ if and only if $\gg$ acts densely on $L$.
\end{prop}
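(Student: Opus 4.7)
The forward direction is exactly Lemma~\ref{corank2}: if the large annihilator condition over $\gg^M$ holds, the relations~(\ref{gl}) (respectively~(\ref{0})) immediately yield that $\gg$ acts densely on $L$.

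For the converse, suppose $\gg$ acts densely on $L$. The plan is to reduce the claim to the assertion $L|_\gg \in \TT_\gg$ and then close the argument with Lemma~\ref{corank1} and Theorem~\ref{theodense}. First, by Lemma~\ref{densprop1}, the $\gg$- and $\gg^M$-submodule lattices of $L$ coincide, so $L$ has finite length as a $\gg$-module; since the large annihilator condition is preserved by extensions (clear from its definition), we may reduce to the case when $L$ is simple over both $\gg^M$ and $\gg$. Supposing that $L|_\gg \in \TT_\gg$ has been established, Lemma~\ref{corank1} produces a unique $\gg^M$-module structure on $L|_\gg$ satisfying the large annihilator condition over $\gg^M$; Theorem~\ref{theodense}, combined with density, ensures that the $\gg^M$-structure compatible with a given $\gg$-structure on $L$ is unique, so this enhancement must coincide with the original $L$, giving $L \in \TT_{\gg^M}$.

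The heart of the argument is the verification of $L|_\gg \in \TT_\gg$ in the simple case. I would appeal to Theorem~\ref{th:6.1}, specifically condition~(ii): $L|_\gg$ is a weight module for some local Cartan $\hh \subset \gg$ and is $\tilde G$-invariant. Fix a local Cartan $\hh$ of the form provided by Proposition~\ref{prop:1.4.10}; integrability of $L$ over $\gg$ forces $\hh$ to act semisimply on $L$, producing a weight decomposition $L = \bigoplus_\mu L^\mu$. For the $\tilde G$-invariance, the key input is that the Mackey Lie algebra $\gg^M$ is stable under conjugation by every $\tilde g \in \tilde G$, so the twist $L_{\tilde g}$ is again a $\gg^M$-module on the same underlying vector space; density together with the uniqueness of the $\gg^M$-structure (Theorem~\ref{theodense}) should force $L_{\tilde g}$ and $L$ to agree as $\gg^M$-modules, hence as $\gg$-modules.

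The main obstacle will be the rigorous verification of $\tilde G$-invariance. Concretely, one must produce a $\gg^M$-module isomorphism $L \to L_{\tilde g}$ for each $\tilde g \in \tilde G$; I would attempt to construct it by exponentiating appropriate one-parameter subgroups of $\gg^M$ acting on $L$ and using the density of $\gg$ to extend to all of $\tilde G$. This step requires careful attention to the integration of the infinitesimal action and makes essential use of both the integrability of $L$ over $\gg$ and the countable-dimensional hypothesis on $V$.
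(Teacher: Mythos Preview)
There is a genuine gap in the converse direction, at the step where you claim that integrability of $L$ over $\gg$ forces $\hh$ to act semisimply and hence produces a weight decomposition $L=\bigoplus_\mu L^\mu$. Integrability guarantees only that each individual $h\in\hh$ acts locally finitely and diagonalizably; it does \emph{not} guarantee that the countable commuting family $\hh$ is simultaneously diagonalizable on $L$. The module $V^*$ is the standard counterexample: it is integrable over $\gg$, yet $V^*\notin\Int^{\wt}_{\gg,\hh}$ for any local Cartan subalgebra (as noted in the Examples following diagram~(\ref{diagram})). You have restricted to the simple case, but nothing in your argument uses simplicity or density to rule this phenomenon out, so the claimed weight decomposition is unjustified.

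The $\tilde G$-invariance step is also problematic. Your plan is to exponentiate one-parameter subgroups of $\gg^M$ acting on $L$, but $L$ is only assumed integrable over $\gg$, not over $\gg^M$; elements of $\gg^M\setminus\gg$ need not act locally finitely on $L$, so exponentiation is unavailable. The appeal to Theorem~\ref{theodense} is circular here: that theorem transfers isomorphisms between the $\gg$- and $\gg^M$-levels, but you still need to produce an isomorphism $L\simeq L_{\tilde g}$ on one of the two levels to begin with.

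The paper's proof avoids Theorem~\ref{th:6.1} entirely and instead establishes the large annihilator condition for $L$ over $\gg$ directly, by a contradiction argument that exploits the $\gg^M$-structure in an essential way: if the condition failed at some $l\in L$, one finds infinitely many commuting copies $\text{\ss}_j\simeq\sl(2)$ inside $\gg$ not annihilating $l$, observes that the diagonal $\text{\ss}\subset\prod_j\text{\ss}_j$ lives in $\gg^M$, and then uses density of $\gg$ to approximate the action of $\text{\ss}$ on $l$ by elements of $\gg$, ultimately forcing $\text{\ss}_j\cdot l=0$ for large $j$. Once the large annihilator condition over $\gg$ is in hand, $L|_\gg\in\TT_\gg$ follows from the definition, and the passage back to $\gg^M$ uses Lemma~\ref{extension} (that $\Hom_\gg=\Hom_{\gg^M}$ on finite-length modules, a consequence of Theorem~\ref{maintheo}). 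This last lemma is the rigorous replacement for your ``uniqueness of the $\gg^M$-structure'' step.
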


We first need a lemma.

\begin{lemma}\label{extension} Let $\gg^M=\gl^M(V,V_*), \oo^M(V)$, $\sp^M(V)$ for a countable dimensional $V$, and let $L$ and $L'$ be  
$\gg^M$-modules. 
Assume that $L$ and $L'$ have finite length as $\gg$-modules. Then 
$$\Hom_{\gg}(L,L')=\Hom_{\gg^M}(L,L').$$
In particular, if $L$ and $L'$ are isomorphic as $\gg$-modules, then $L$ and $L'$ are isomorphic as $\gg^M$-modules. 
\end{lemma}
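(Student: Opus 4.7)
\smallskip
The containment $\Hom_{\gg^M}(L,L') \subset \Hom_\gg(L,L')$ is immediate. For the reverse, I plan to exploit that $\gg$ is an ideal in $\gg^M$ (\refsec{sect:6}), which equips $\Hom_\gg(L,L')$ with a canonical $\gg^M$-module structure via
$$(x\cdot\phi)(l) := x\phi(l) - \phi(xl),\quad x \in \gg^M,\ \phi \in \Hom_\gg(L,L').$$
A direct computation using $[x,y] \in \gg$ for $y \in \gg$ together with the $\gg$-linearity of $\phi$ shows that $y\cdot(x\cdot\phi) = 0$, so $x\cdot\phi$ is indeed $\gg$-linear and the subalgebra $\gg$ itself acts trivially on $\Hom_\gg(L,L')$. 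Moreover, the $\gg^M$-invariants under this action coincide with $\Hom_{\gg^M}(L,L')$. Hence the goal reduces to showing that this $\gg^M$-action is trivial on all of $\Hom_\gg(L,L')$.

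In the $\gl$ case, the central element $\id_V \in \gg^M$ also acts trivially, since $(\id_V \cdot \phi)(l) = \phi(l) - \phi(l) = 0$; in the $\oo$ and $\sp$ cases $\id_V \notin \gg^M$ and no correction is needed. Hence the action factors through the quotient $\gg^M/(\gg \oplus \CC\id)$ (respectively $\gg^M/\gg$), which by \refth{maintheo} is a \emph{simple infinite-dimensional} Lie algebra. Any such Lie algebra admits no nonzero finite-dimensional representation: the kernel of such a representation would be a proper ideal, hence zero by simplicity, forcing an embedding of an infinite-dimensional Lie algebra into the endomorphisms of a finite-dimensional space, a contradiction.

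It therefore suffices to show that $\Hom_\gg(L,L')$ is finite-dimensional over $\CC$. A d\'evissage along composition series of $L$ and $L'$ bounds $\dim_\CC \Hom_\gg(L,L')$ by a finite sum of terms of the form $[L:S][L':S]\dim_\CC \End_\gg(S)$, where $S$ runs over the simple $\gg$-composition factors of $L$ and $L'$. Since $V$ (and, in the $\gl$ case, $V_*$) is countable-dimensional, so is $\gg$, and every simple $\gg$-module $S$ is cyclic, hence of countable $\CC$-dimension. Schur's lemma over the algebraically closed field $\CC$ then yields $\End_\gg(S) = \CC$ for each such $S$, and thus $\dim_\CC \Hom_\gg(L,L') < \infty$. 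Combined with the previous paragraph, this forces the $\gg^M$-action on $\Hom_\gg(L,L')$ to be trivial, which gives $\Hom_\gg(L,L') = \Hom_{\gg^M}(L,L')$.

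The ``In particular'' assertion is then immediate: given a $\gg$-module isomorphism $\phi : L \xrightarrow{\sim} L'$, both $\phi$ and $\phi^{-1}$ are $\gg$-linear, hence $\gg^M$-linear by the main statement, so $\phi$ is a $\gg^M$-module isomorphism. The principal obstacle in this plan is not conceptual but computational: verifying that the formula $x\cdot\phi$ defines an honest Lie algebra action and tracking the dimension bound carefully along the Jordan--H\"older filtration. The essential input is the simplicity of $\gg^M/(\gg \oplus \CC\id)$ supplied by \refth{maintheo}, together with the observation that $\id_V$ acts trivially on $\Hom_\gg$.
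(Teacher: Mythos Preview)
Your proof is correct and follows essentially the same line as the paper's: endow $\Hom_\gg(L,L')$ with the natural $\gg^M$-action coming from the fact that $\gg$ is an ideal, observe that this space is finite dimensional, and then invoke \refth{maintheo} to conclude that the $\gg^M$-action must be trivial. The paper phrases the last step as ``$\gg^M$ has no proper ideals of finite codimension, hence any finite-dimensional $\gg^M$-module is trivial,'' whereas you factor through the simple quotient $\gg^M/(\gg\oplus\CC\id)$ (resp.\ $\gg^M/\gg$); these are equivalent formulations. Your d\'evissage together with Dixmier's form of Schur's lemma (using that $\gg$ is countable dimensional over the uncountable algebraically closed field $\CC$) is a welcome elaboration of a point the paper leaves implicit when it asserts that finite length of $L,L'$ forces $\Hom_\gg(L,L')$ to be finite dimensional.
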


\begin{proof} Observe that $\Hom_{\mathbb C}(L,L')$ has a natural structure of $\gg^M$-module defined by
\begin{equation}\label{eq_12_11}
(X\cdot \varphi)(l):=X\cdot\varphi(l)-\varphi (X\cdot l)\,\,\text{for}\,\, X\in\gg^M, \varphi\in \Hom_{\mathbb C}(L,L'),\,l\in L. 
\end{equation}
Since $\gg$ is an ideal in $\gg^M$, $\Hom_{\gg}(L,L')$ is a $\gg^M$-submodule in $\Hom_{\mathbb C}(L,L')$. Moreover, $\Hom_{\gg}(L,L')$ is finite dimensional as $L$ and $L'$ have finite length over $\gg$. On the other hand, Theorem \ref{maintheo} implies that $\gg^M$ does not have proper ideals of finite codimension, hence any finite-dimensional $\gg^M$-module is trivial. Therefore (\ref{eq_12_11}) defines a trivial $\gg^M-$module structure of $\Hom_{\gg^M}(L,L')$, which means that any $\phi \in \Hom_\gg(L,L')$ belongs to $\Hom_{\gg^M}(L,L')$. This shows that $\Hom_{\gg}(L,L')=\Hom_{\gg^M}(L,L')$.
The second assertion follows immediately.
\end{proof}

\vspace{0.3cm}

\begin{proof}\textit{of Proposition \ref{prop_sect8}}\hspace{0.2cm}If $L \in \TT_{\gg^M}$, then $\gg$ acts densely on $L$ by Lemma \ref{corank1}. 

Let now $\gg$ act densely on $L$. We first prove that $L$ satisfies the large annihilator condition as a $\gg-$module. 
Assume that $\gg$ acts densely on $L$ but $L$ does not satisfy the large annihilator condition as a $\gg-$module. Using the matrix realizations of $\gg$ and $\gg^M$
one can show that there exists $l\in L$ and a sequence $\lbrace X_i\rbrace_{i\in\ZZ_{\geq 1}}$ of commuting linearly independent elements  $X_i\in\gg$
which don't belong to the annihilator of $l$. Furthemore, 
one can find an infinite subseqence $\lbrace Y_j=X_{i_j} \rbrace$ such that each $Y_j$ lies in an $\sl(2)$-subalgebra
$\text{\ss}_j\subset\gg$ with the condition
$[\text{\ss}_j,\text{\ss}_s]=0$ for $j\neq s$. Then $\prod_j \text{\ss}_j$ is a Lie subalgebra in $\gg^M$, and let $\text{\ss}$ be the diagonal subalgebra in $\prod_j\,\text{\ss}_j$. 
If $x\in\text{\ss}$, we denote by $x_j$ its component in $\text{\ss}_j$.

Since $\gg$ acts densely on $L$, there exists a linear map $\theta:\text{\ss}\to \gg$ such that $\theta(y)\cdot l=y\cdot l$ for all $y\in\text{\ss}$. 
On the other hand, there exists $n \in \ZZ_{\geq 1}$ such that
$[\theta(y),x_j]=0$ for all $y,x\in\text{\ss}$ and $j>n$. Let $d_y:=y-\theta(y)$. Then $d_y\cdot l=0$ and 
\begin{equation}\label{equality_d_l}
[d_y,x_j]=[y,x_j]=[y_j,x_j]\;\;\;\mathrm{for\;all}\;x,y\in\text{\ss}\;\mathrm{and}\;j>n.
\end{equation}           
Set $L_j:=U(\text{\ss}_j)\cdot l$. Then (\ref{equality_d_l}) implies $d_y\cdot L_j \subset L_j$ for all $j>n$.  
Moreover, $\psi_y:=d_y-y_j$ commutes with $\text{\ss}_j$, hence $\psi_y\in\End_{\text{\ss}_j}(L_j)$. 
Considering $y_j + \psi_y$ as an element of $\End_{\CC}(L_j)$, we obtain in addition that $l\in\Ker (y_j+\psi_y)$ for all $y\in\text{\ss}$ and all $j>n$. 

Choose a standard basis $E,H,F\in\text{\ss}$. Since $L_j$ is a finite-dimensional $\beta_j\simeq\sl(2)$-module, 
we obtain easily
$$\Ker(E_j+\psi_E)\cap L_j=L_j^{E_j},\,\, \Ker(F_j+\psi_F)\cap L_j=L_j^{F_j}.$$
Since
$$l \in \Ker(E_j+\psi_E)\cap \Ker(F_j+\psi_F)\cap L_j=L_j^{\text{\ss}_j},$$ 
we conclude that $L_j$ is a trivial  $\text{\ss}_j$-module for all $j>n$, which contradicts our original assumption that $Y_j\cdot l\neq 0$. 
Thus, $L$ satisfies the large annihilator condition as a $\gg-$module.

Note that, as $\gg$ acts densely on $L$, the length of $L$ as a $\gg-$module is the same as the length of $L$ as a $\gg^M-$module. 
Since $L$ satisfies  the large annihilator condition for $\gg$ and has finite length as $\gg-$module, we conclude that $L_{\downarrow\gg}$ is a tensor module, i.e. 
an object of $\TT_\gg$. By Theorem \ref{mainres_sect7} b), $L_{\downarrow\gg} = L_{\downarrow\gg}'$ for some $L' \in \TT_{\gg^M}$. Finally, Lemma \ref{extension} implies that the $\gg^M$ modules $L'$ and $L$ are isomorphic, i.e. $L\in\TT_{\gg^M}$.
\end{proof}
\vspace{0.33cm}

Next, under the assumption that $V$ is countable dimensional, consider maximal subalgebras $\hh^M$ of $\gg^M$ which act semisimply on $V$ and $V_*$ (respectively only on $V$ for $\gg = \oo(V), \sp(V)$). 
It is strightforward to show that the centralizer in $\gg^M$ of any local Cartan subalgebra $\hh$ of $\gg$ is such a subalgebra of $\gg^M$. 
If $\gg^M = \gl^M(V,V_*)$ is realized as the Lie algebra of matrices $X = (x_{ij})_{i,j \in \ZZ}$ with finite rows and columns, then $\hh^M$ can 
be chosen as the subalgebra of diagonal matrices.

The following statement looks plausible to us.

\begin{conjecture}\label{conject:1}
Let $\gg = \sl(V,V_*), \oo(V), \sp(V)$ for a countable-dimensional $V$. Let $M$ be a finite length $\gg^M$-module which is integrable as a $\gg-$module. The following conditions on $M$ are equivalent:
\begin{enumerate}[a)]
 \item $M \in \TT_{\gg^M}$;
 \item $M$ is countable dimensional;
 \item $M$ is a semisimple $\hh^M-$module for some subalgebra $\hh^M\subset \gg^M$;
  \item $M$ is a semisimple $\hh^M-$module for any subalgebra $\hh^M \subset \gg^M$.
\end{enumerate}

\end{conjecture}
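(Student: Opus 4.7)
\emph{Proof plan for Conjecture \ref{conject:1}.}

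The plan is to close the cycle of implications (a) $\Rightarrow$ (d) $\Rightarrow$ (c) $\Rightarrow$ (b) $\Rightarrow$ (a). The descents from (a) are all immediate from Theorem \ref{mainres_sect7}(a), which realizes any $M \in \TT_{\gg^M}$ as a $\gg^M$-subquotient of a finite direct sum $T(R)^{\oplus k}$, with $R = V \oplus V_*$ or $R = V$. Since $T(R)^{\oplus k}$ is countable-dimensional, so is $M$, giving (b). Since any admissible $\hh^M$ acts semisimply on $V$ and $V_*$ by construction, it acts semisimply on all tensor powers, direct sums, and subquotients of $R$, giving (d); and (d) $\Rightarrow$ (c) is trivial. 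Thus the substantive content is in the chain (c) $\Rightarrow$ (b) $\Rightarrow$ (a), with Proposition \ref{prop_sect8} serving as the key bridge: under our standing hypotheses, $M \in \TT_{\gg^M}$ is equivalent to $\gg$ acting densely on $M$.

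Both remaining directions will be handled through a single key lemma: \emph{every simple $\gg^M$-composition factor $S$ of $M$ (under hypothesis (b) or (c)) belongs to $\TT_{\gg^M}$}. Granted this lemma, (c) $\Rightarrow$ (b) follows because each such composition factor, being a simple tensor module, is countable-dimensional, and $M$ has finite length over $\gg^M$; and (b) $\Rightarrow$ (a) follows from extension-closure of $\TT_{\gg^M}$, equivalently from Proposition \ref{prop_sect8} combined with the propagation of density of the $\gg$-action from composition factors to $M$ via a standard extension argument that uses the finite-corank structure of annihilators. The approach to the key lemma itself mirrors Lemma \ref{tensor}: given a simple $\gg^M$-module $S$ that is $\gg$-integrable and either countable-dimensional or $\hh^M$-semisimple, one writes $S = U(\gg^M) \cdot S_0$ for a simple $\gg$-submodule $S_0 \subset S$ (which exists by integrability), produces an extremal $\hh$-weight vector $v \in S_0$ using integrability and the bounded-dimension hypothesis, identifies the $\gg^M$-annihilator of $v$ as containing a parabolic subalgebra of $\gg^M$ together with a finite corank subalgebra, and exhibits $S$ as the unique integrable quotient of the corresponding parabolically induced module, which embeds into a tensor power $T^{m,n}$ (or $T^m$).

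The principal obstacle is this classification of simple $\gg^M$-modules, in particular a delicate sub-problem: ruling out pathological simple $\gg^M$-modules on which $\gg$ acts trivially while the simple quotient $\gg^M/(\gg \oplus \CC\id)$ of Theorem \ref{maintheo} acts nontrivially. By the ideal structure of $\gg^M$ established there, any such module factors through $\gg^M/(\gg \oplus \CC\id)$, so the plan is to adapt the argument of Lemma \ref{extension}: exploit the simplicity and infinite-dimensionality of $\gg^M/(\gg \oplus \CC\id)$ together with the finite-length or semisimplicity hypotheses on $M$ to conclude that any such module must be trivial, and therefore belong to $\TT_{\gg^M}$. A secondary but nontrivial technical point is the verification that the large annihilator condition is closed under extensions in our setting; this uses that for objects of $\TT_{\gg^M}$ the action of a finite corank subalgebra on any given element lands in a controlled submodule, allowing finite corank annihilators to be combined through a short exact sequence. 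With the classification and this closure in hand the cycle closes, proving the conjecture.
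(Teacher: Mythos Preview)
The statement you are addressing is explicitly a \emph{conjecture} in the paper (introduced with ``the following statement looks plausible to us''); there is no proof in the paper to compare against. So the relevant question is whether your plan actually closes the gap the authors left open.

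It does not. You correctly identify the easy implications (a) $\Rightarrow$ (b), (d) and (d) $\Rightarrow$ (c), and you correctly locate the substantive content in (b) $\Rightarrow$ (a) and (c) $\Rightarrow$ (a). But your treatment of what you call the ``principal obstacle'' is where the plan breaks down. You propose to rule out nontrivial simple $\gg^M$-modules on which $\gg$ acts trivially by ``adapting the argument of Lemma \ref{extension}.'' That lemma works because $\gg^M$ has no proper ideal of \emph{finite} codimension, forcing every \emph{finite-dimensional} $\gg^M$-module to be trivial. Here the candidate module $S$ is only countable-dimensional (or $\hh^M$-semisimple), and the quotient $\mathfrak q := \gg^M/(\gg \oplus \CC\id)$ is a simple Lie algebra of uncountable dimension. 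A faithful simple $\mathfrak q$-module $S$ of countable dimension would embed $\mathfrak q$ into $\End_\CC(S)$, which itself has dimension continuum, so no cardinality obstruction is available and nothing in Lemma \ref{extension} or Theorem \ref{maintheo} excludes such $S$. Showing that $\mathfrak q$ admits no such module is essentially the content of the conjecture in the degenerate case, and you have restated it rather than resolved it.

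There are two further gaps. First, in the nondegenerate case your plan begins ``write $S = U(\gg^M)\cdot S_0$ for a simple $\gg$-submodule $S_0 \subset S$ (which exists by integrability)''; but integrability of a $\gg$-module does not by itself guarantee the existence of a simple $\gg$-submodule, and you give no argument. The proof of Lemma \ref{tensor} that you invoke uses the large annihilator condition from the outset to produce a finite-dimensional $\gl(V_f,W_f)$-submodule, which is exactly the hypothesis you are trying to establish. Second, your passage from composition factors back to $M$ requires that $\TT_{\gg^M}$ (equivalently, the large annihilator condition together with finite $\gg$-length) be closed under extensions inside the class of $\gg$-integrable finite-length $\gg^M$-modules. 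You flag this as ``nontrivial'' but supply no argument; given $0 \to L \to M \to N \to 0$ and $m \in M$, one obtains $\kk\cdot m \subset L$ for some finite corank $\kk$, but $\kk\cdot m$ may be infinite-dimensional and there is no evident common finite-corank annihilator. Until these points are settled, the plan remains a reformulation of the conjecture rather than a proof.
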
 

\vspace{0.4cm}

Consider now the inclusion of Lie algebras 
$$\gg = \sl(V,V_*) \subset \gl^M(V,V^*) = \End(V)$$
where $V$ is an arbitrary vector space. The subalgebra $\gg$ is not dense in $\End(V)$, nevertheless the monoidal categories $\TT_{\gg}$ and $\TT_{\End(V)}$ are equivalent by Theorem \ref{th:6.1} and Theorem \ref{mainres_sect7}. Here is a functor which most likely also provides such an equivalence. Let $M \in \TT_{\End(V)}$. Set 
$$\Gamma_{\gg}^\wt \;(M) := \cap_{\hh \subset \gg}\,\Gamma_{\hh}^\wt\,(M)$$
where $\hh$ runs over all local Cartan subalgebras of $\gg$.

\begin{conjecture}
 $\Gamma_{\gg}^\wt: \TT_{\End(V)} \rightsquigarrow \TT_{\gg}$ is an equivalence of monoidal categories.
\end{conjecture}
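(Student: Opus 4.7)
The plan is to realize the conjectured equivalence by exploiting the analog of Corollary \ref{corr:1}b) for the Mackey Lie algebra $\End(V) = \gl^M(V,V^*)$, namely that every object of $\TT_{\End(V)}$ is a finite-length subquotient of $T(V \oplus V^*)^{\oplus k}$, combined with an explicit computation of $\Gamma_\gg^\wt$ on the injective building blocks $V^{\otimes m} \otimes V^{*\otimes n}$.

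The first step is a direct calculation. For any local Cartan $\hh \subset \gg$ arising via Proposition \ref{prop:1.4.10} from a basis $\{v_\gamma\}$ of $V$ with $V_* = \span\{v_\gamma^*\}$, the generators $h_{\gamma,\tilde\gamma} = v_\gamma \otimes v_\gamma^* - v_{\tilde\gamma} \otimes v_{\tilde\gamma}^*$ satisfy
$$h_{\gamma,\tilde\gamma} \cdot f = -f(v_\gamma)\,v_\gamma^* + f(v_{\tilde\gamma})\,v_{\tilde\gamma}^* \in V_*$$
for every $f \in V^*$. Hence any nonzero weight vector of nonzero weight must itself lie in $V_*$, while a nonzero $\hh$-invariant would force $f(v_\gamma)=0$ for all $\gamma$ and thus $f=0$; this gives $\Gamma_\hh^\wt(V^*) = V_*$ independently of the choice of basis, so $\Gamma_\gg^\wt(V^*) = V_*$. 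Because $V^{\otimes m}$ is already $\hh$-semisimple with finite-dimensional weight spaces, $\Gamma_\hh^\wt$ factors through the second tensor factor, and an induction on $n$ using the short exact sequence $0 \to V_* \otimes V^{*\otimes(n-1)} \to V^{*\otimes n} \to (V^*/V_*) \otimes V^{*\otimes(n-1)} \to 0$ together with the left exactness of $\Gamma_\hh^\wt$ yields $\Gamma_\gg^\wt(V^{\otimes m} \otimes V^{*\otimes n}) = V^{\otimes m} \otimes V_*^{\otimes n}$, which is an injective object of $\TT_\gg$ by Corollary \ref{injective}.

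Well-definedness on all of $\TT_{\End(V)}$ then follows from the left-exactness of $\Gamma_\gg^\wt$ (as an intersection of the left-exact right adjoints $\Gamma_\hh^\wt$) applied to an embedding $M \hookrightarrow T(V\oplus V^*)^{\oplus k}$: this places $\Gamma_\gg^\wt(M)$ inside $T(V\oplus V_*)^{\oplus k}$ and hence in $\TT_\gg$ by Corollary \ref{corr:1}b). Faithfulness of $\Gamma_\gg^\wt$ reduces to the fact that $\Gamma_\gg^\wt(M)$ generates $M$ as an $\End(V)$-module: since $V^*$ is simple over $\End(V)$, $\End(V) \cdot V_* = V^*$, which propagates to $\End(V) \cdot (V^{\otimes m} \otimes V_*^{\otimes n}) = V^{\otimes m} \otimes V^{*\otimes n}$, and the general case follows by a diagram chase through the injective presentation. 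For essential surjectivity, given $N \in \TT_\gg$ written as $K_1/K_2$ inside $\bigoplus_i V^{\otimes m_i} \otimes V_*^{\otimes n_i}$, I take $M := \tilde K_1/\tilde K_2$ where $\tilde K_j$ is the $\End(V)$-submodule of $\bigoplus_i V^{\otimes m_i} \otimes V^{*\otimes n_i}$ generated by $K_j$; Step 1 then identifies $\Gamma_\gg^\wt(M) \simeq N$. Monoidality reduces, by the embedding-into-injective-hulls argument of Lemma \ref{tensphi}, to the injective case where both $\Gamma_\gg^\wt(M \otimes N)$ and $\Gamma_\gg^\wt(M) \otimes \Gamma_\gg^\wt(N)$ equal $V^{\otimes(m+k)} \otimes V_*^{\otimes(n+l)}$.

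The main obstacle is fullness. Since $\gg$ is not dense in $\End(V)$ --- the quotient $V^*/V_*$ is a nonzero trivial $\gg$-module that witnesses a non-unique extension of $\gg$-structures to $\End(V)$-structures --- neither Lemma \ref{denshom} nor Lemma \ref{extension} applies. The resolution is to identify both endomorphism algebras through the Koszul algebra $\mathcal A_\sl$ of Proposition \ref{Koszul1}: restriction of morphisms to $\gg$-weight vectors should induce an isomorphism
$$\Hom_{\End(V)}\bigl(V^{\otimes m} \otimes V^{*\otimes n},\, V^{\otimes k} \otimes V^{*\otimes l}\bigr) \stackrel{\sim}{\longrightarrow} \Hom_{\gg}\bigl(V^{\otimes m} \otimes V_*^{\otimes n},\, V^{\otimes k} \otimes V_*^{\otimes l}\bigr),$$
because both spaces are spanned by permutations and contractions, with the contraction $V \otimes V^* \to \CC$ canonically restricting (under $\Gamma_\gg^\wt$) to $V \otimes V_* \to \CC$. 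Injectivity of this map is the faithfulness established above, while surjectivity rests on the explicit formulas for $\mathcal A_\sl$ extending verbatim from the $(V,V_*)$-pairing to the $(V,V^*)$-pairing. Transferring this identification from injectives to arbitrary $M, M' \in \TT_{\End(V)}$ by a diagram chase paralleling the proof of Lemma \ref{functor} will complete the proof.
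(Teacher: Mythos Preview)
The statement you are attempting to prove is labeled a \emph{Conjecture} in the paper and carries no proof there; the authors only remark that the categories are abstractly equivalent (via Theorems \ref{equivmain} and \ref{mainres_sect7}) and then conjecture that $\Gamma_\gg^\wt$ realizes this equivalence. So there is no ``paper's own proof'' to compare against --- you are trying to settle an open question.

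Your outline is a sensible strategy, and the identification $\Gamma_\gg^\wt(V^{\otimes m}\otimes V^{*\otimes n}) = V^{\otimes m}\otimes V_*^{\otimes n}$ together with the $\mathcal A_{\sl}$-matching for fullness on injectives is exactly the right backbone. However, several of the steps are not yet arguments. First, your induction on $n$ via the short exact sequence $0\to V_*\otimes V^{*\otimes(n-1)}\to V^{*\otimes n}\to (V^*/V_*)\otimes V^{*\otimes(n-1)}\to 0$ does not close: the third term is a (huge) direct sum of copies of $V^{*\otimes(n-1)}$ as a $\gg$-module, so $\Gamma_\hh^\wt$ of it is nonzero, and left exactness alone only gives $V_*^{\otimes n}\subset \Gamma_\hh^\wt(V^{*\otimes n})$ with an uncontrolled cokernel. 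A cleaner route is to embed $V^{*\otimes n}\hookrightarrow (V^{\otimes n})^*$ and use that for any $\hh$-weight module $M$ with finite-dimensional weight spaces one has $\Gamma_\hh^\wt(M^*)=\bigoplus_\lambda (M^{-\lambda})^*$, which for $M=V^{\otimes n}$ is exactly $V_*^{\otimes n}$. Second, your essential-surjectivity step silently assumes that $\Gamma_\gg^\wt$ is \emph{exact} on $\TT_{\End(V)}$ (to pass from $\tilde K_1/\tilde K_2$ to $K_1/K_2$), but you have only established left exactness; this is the substantive missing ingredient and is closely tied to showing that $\Gamma_\gg^\wt(\tilde K_j)=K_j$, which you assert via ``Step 1'' without proof. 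Third, the generation claim $U(\End(V))\cdot(V^{\otimes m}\otimes V_*^{\otimes n}) = V^{\otimes m}\otimes V^{*\otimes n}$ does not follow just from the simplicity of $V^*$ over $\End(V)$, since the action on the tensor product is diagonal; you need a separate argument (e.g.\ inducting on $n$ using rank-one operators). Until exactness and generation are nailed down, the diagram chase of Lemma \ref{functor} cannot be transported and the proposal remains a program rather than a proof.
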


If $V$ is countable dimensional, it is easy to check that $V^*/V_*$ is a simple $\gg^M = \gl^M(V,V_*)-$module. Hence $V^*$ is a $\gg-$module of length 2. This raises the natural question of whether the entire category $\TT_{\End(V)}$ consists of $\gg-$modules of finite length. A further problem is to compute the socle filtration as a $\gg-$module of a simple $\End(V)-$module in $\TT_{\End(V)}$.

Another open question is whether there is an analogue of the category $\widetilde{\mathrm{Tens}}_{\gg}$ when we replace $\gg$ by $\gg^M$. More precisely, 
what can be said about the abelian monoidal category of $\gg^M-$modules obtained from $\TT_{\gg^M}$ by iterated dualization in addition to taking submodules, 
quotients and applying $\otimes$ ? In particular, the adjoint representation, and therefore the coadjoint representation are objects of 
$\widetilde{\mathrm{Tens}}_{\gg^M}$.  How can one describe the coadjoint representation $(\gg^M)^*$ of $\gg^M$?


\begin{thebibliography}{5}

\bibitem[BA1]{BA1} A. A. Baranov, Complex finitary simple Lie algebras, Arch. Math. \textbf{71} (1998), 1-6.
\vspace{0.2cm}
\bibitem[BA2]{BA2} A. A. Baranov, Finitary simple Lie algebras, Journ. Algebra \textbf{219} (1999), 299-329.
\vspace{0.2cm}
\bibitem[BaZh]{BZ} A. A. Baranov, A. G. Zhilinskii, Diagonal direct limits of simple Lie algebras, Comm. Algebra \textbf{27} (1998), 2749-2766.
\vspace{0.2cm}
\bibitem[BB]{BB} Y. Bakhturin, G. Benkart, Weight modules of direct limit Lie algebras, Comm. Algebra \textbf{27} (1998), 2249-2766.
\vspace{0.2cm}
\bibitem[BS]{BS} A. A. Baranov, H. Strade, Finitary Lie algebras, Journ. Algebra \textbf{254} (2002), 173-211.
\vspace{0.2cm}
\bibitem[BhS]{BHS} Y. Bakhturin, H. Strade, Some examples of locally finite simple Lie algebras, Arch. Math. \textbf{65} (1995), 23-26.
\vspace{0.2cm}
\bibitem[DiP1]{dip} I. Dimitrov, I. Penkov, Weight modules of direct limit Lie algebras, IMRN 1999, no. 5, 223-249.
\vspace{0.2cm}
\bibitem[DiP2]{dip2} I. Dimitrov, I. Penkov, Locally semisimple and maximal subalgebras of the finitary Lie algebras $\mathfrak{gl}(\infty), \sl(\infty), \so(\infty)$ and $\sp(\infty)$, Journ. Algebra \textbf{322} (2009), 2069-2081.  
\vspace{0.2cm}
\bibitem[DiP3]{dip3} I. Dimitrov, I. Penkov, Borel subalgebras of $\gl(\infty)$, Resenhas \textbf{6} (2004), no. 2-3, 153-163.
\vspace{0.2cm}
\bibitem[DPS]{DPS} E. Dan-Cohen, I. Penkov, V. Serganova, A Koszul category of representations of finitary Lie algebras, preprint 2011, arXiv:1105.3407.
\vspace{0.2cm}
\bibitem[DPSn]{DPSn} E. Dan-Cohen, I. Penkov, N. Snyder, Cartan subalgebras of root-reductive Lie algebras, Journ. Algebra \textbf{308}(2007), 583-611.
\vspace{0.2cm}
\bibitem[DPW]{DPW} I. Dimitrov, I. Penkov, J. A. Wolf, A Bott-Borel-Weil theory for direct limits of algebraic groups, Amer. Journ. Math. \textbf{124} (2002), no. 5, 955-998.\vspace{0.2cm}
\bibitem[DaPW]{DaPW} E. Dan-Cohen, I. Penkov, J. A. Wolf, Parabolic Subgroups of Real Direct Limit Lie Groups, Contemporary Mathematics \textbf{499}, AMS, 2009, pp. 47-59.
\vspace{0.2cm}
\bibitem[FT]{FT} B. L. Feigin, B. L. Tsygan, Cohomologies of Lie algebras of generalized Jacobi matrices, Funktsional. Anal. i Prilozhen. \textbf{17} (1983), no. 2, 86-87.
\vspace{0.2cm}
\bibitem[K]{K} V. Kac, Infinite-dimensional Lie algebras, Cambridge University Press, New York, 1994. 
\vspace{0.2cm}
\bibitem[M]{m}  G. Mackey, On infinite dimensional linear spaces, Trans. AMS \textbf{57} (1945), 155-207.
\vspace{0.2cm}
\bibitem[Ma]{M} S. Markouski, Locally simple subalgebras of diagonal Lie algebras, Journ. Algebra \textbf{327} (2011), 186-207.
\vspace{0.2cm}
\bibitem[N]{N} K. -H. Neeb, Holomorphic highest weight representations of infinite-dimensional complex classical groups. Journ. Reine Angew. Math. \textbf{497} (1998), 171-222.
\vspace{0.2cm}
\bibitem[Na]{Na} L. Natarajan, Unitary highest weight-modules of inductive limit Lie algebras and groups. Journ. Algebra \textbf{167} (1994), no. 1, 9-28.
\vspace{0.2cm}
\bibitem[NP]{NP} K. -H. Neeb, I. Penkov, Cartan subalgebras of $\gl(\infty)$, Canad. Math. Bull. \textbf{46} (2003), no. 4, 597-616.
\vspace{0.2cm}
\bibitem[NS]{NS} K. -H. Neeb, N. Stumme, The classification of locally finite split simple Lie algebras. Journ. Reine Angew. Math. \textbf{533} (2001), 25-53.
\vspace{0.2cm}
\bibitem[O]{O} G. Olshanskii, Unitary representations of the group $SO_0(\infty,\infty)$ as limits of unitary representations of the groups $SO_0(n,\infty)$ as $n \rightarrow \infty$, Funct. Anal. Appl. \textbf{20} (1998), 292-301.
\vspace{0.2cm}
\bibitem[PP]{PP}  I. Penkov, A. Petukhov, On ideals in the enveloping algebra of a locally simple Lie algebra, preprint 2012, arXiv:1210.0466.
\vspace{0.2cm}
\bibitem[PS]{ps} I. Penkov, V. Serganova, Categories of integrable $sl(\infty)$-, $o(\infty)$-, $sp(\infty)$-modules, in "Representation Theory and Mathematical Physics", Contemporary Mathematics 557 (2011), pp. 335-357.
\vspace{0.2cm}
\bibitem[PStr]{pstr} I. Penkov, H. Strade, Locally finite Lie algebras with root decomposition, Archiv Math. \textbf{80} (2003), 478-485. 
\vspace{0.2cm}
\bibitem[PStyr]{pstyr} I. Penkov, K. Styrkas, Tensor representations of infinite-dimensional root-reductive Lie algebras, in Developments and Trends in Infinite-Dimensional Lie Theory, Progress in Mathematics 288, Birkh\"auser, 2011, pp. 127-150.
\vspace{0.2cm}
\bibitem[PZ]{PZ} I. Penkov, G. Zuckerman, A construction of generalized Harish-Chandra modules for locally reductive Lie algebras, Canad. Math. Bull. \textbf{50} (2007), 603-609.
\vspace{0.2cm}
\bibitem[S]{S} V. Serganova, Classical Lie superalgebras at infinity, in ``Advances in Lie superalgebras'',
Springer, 2013.
\vspace{0.2cm}
\bibitem[SS]{SS} S. Sam, A. Snowden, GL-equivariant modules over polynomial rings in infinitely many variables, arXiv:1206.2233.
\vspace{0.2cm}
\bibitem[SSW]{SSW} S. Sam, A. Snowden, J. Weyman, Homology of Littlewood complexes, arXiv:1209.3509.
\vspace{0.2cm}
\bibitem[Z]{Z} G. Zuckerman, Generalized Harish-Chandra Modules, in ``Highlights in Lie Algebraic Methods'', Progress in Mathematics 295, Birkh\"{a}user, 2012, 123-143.
\vspace{0.2cm}
\bibitem[Zh1]{Zh1} A. G. Zhilinskii, Coherent systems of representations of inductive families of simple complex Lie algebras, (Russian) preprint of Academy of Belarussian SSR, ser. 38(438), Minsk, 1990.
\vspace{0.2cm}
\bibitem[Zh2]{Zh2} A. G. Zhilinskii, Coherent finite-type systems of inductive families of non-diagonal inclusions (Russian), Dokl. Acad. Nauk Belarusi \textbf{36:1}(1992), 9-13, 92.
\end{thebibliography}
\end{document}